\numberwithin{equation}{section}
\theoremstyle{plain}
\newtheorem{theorem}[equation]{Theorem}
\newtheorem{lemma}[equation]{Lemma}
\newtheorem{proposition}[equation]{Proposition}
\newtheorem{corollary}[equation]{Corollary}
\newtheorem{conj}[equation]{Conjecture}
\theoremstyle{remark}
\newtheorem{remark}[equation]{Remark}
\theoremstyle{definition}
\newtheorem{definition}[equation]{Definition}
\newcommand{\bP}{\mathbb{P}}
\newcommand{\bR}{\mathbb{R}}
\newcommand{\bZ}{\mathbb{Z}}
\newcommand{\bC}{\mathbb{C}}
\newcommand{\calA}{\mathcal{A}}
\newcommand{\calC}{\mathcal{C}}
\newcommand{\calM}{\mathcal{M}}
\newcommand{\calP}{\mathcal{P}}
\newcommand{\calI}{\mathcal{I}}
\newcommand{\calD}{\mathcal{D}}
\newcommand{\calW}{\mathcal{W}}
\newcommand{\Aut}{\mathrm{Aut}}
\newcommand{\Orth}{\mathrm{O}}
\newcommand{\Bir}{\mathrm{Bir}}
\newcommand{\Bl}{\mathrm{Bl}}
\newcommand{\Hilb}{\mathrm{Hilb}}
\newcommand{\Mov}{\mathrm{Mov}}
\newcommand{\Cone}{\mathrm{Cone}}
\newcommand{\im}{\mathrm{Im}}
\newcommand{\id}{\mathrm{Id}}
\newcommand{\Gr}{\mathrm{Gr}}
\newcommand{\pex}{\mathrm{pex}}
\newcommand{\flop}{\mathrm{flop}}
\newcommand{\Nef}{\mathrm{Nef}}
\newcommand{\NS}{\mathrm{NS}}
\newcommand{\Mon}{\mathrm{Mon}}
\newcommand{\git}{/\kern-0.2em/}
\newcommand{\Amp}{\mathrm{Amp}}
\newcommand{\bir}{\simeq_{\mathrm{bir}}}
\title[]{Cubic fourfolds with birational Fano varieties of lines}
\author{Corey Brooke}
\address{Department of Mathematics and Statistics, Carleton College, Northfield, MN 55057}
\email{cbrooke@carleton.edu}
\author{Sarah Frei}
\address{Department of Mathematics, Dartmouth College, Kemeny Hall, Hanover, NH 03755}
\email{sarah.frei@dartmouth.edu}
\author{Lisa Marquand}
\address{Courant Institute,
  251 Mercer Street,
  New York, NY 10012, USA}
\email{lisa.marquand@nyu.edu}
\begin{document}

\maketitle

\begin{abstract}
We give several examples of pairs of non-isomorphic cubic fourfolds whose Fano varieties of lines are birationally equivalent (and in one example isomorphic). Two of our examples, which are special families of conjecturally irrational cubics in $\calC_{12}$, provide new evidence for the conjecture that Fourier-Mukai partners are birationally equivalent. We explore how various notions of equivalence for cubic fourfolds are related, and we conjecture that cubic fourfolds with birationally equivalent Fano varieties of lines are themselves birationally equivalent.
\end{abstract}

\section{Introduction}

In recent decades, smooth cubic fourfolds have been studied up to a few different forms of equivalence, especially
\begin{enumerate}
    \item[({\bf{BE}})] birational equivalence and
    \item[({\bf{FM}})] Fourier-Mukai partnership, i.e. equivalence of the Kuznetsov components $\calA_X$ of the derived categories of the two cubics, as defined in \cite{Kuznet}.
\end{enumerate}

In recent work with Qin, we identified two non-isomorphic cubic fourfolds whose Fano varieties of lines are birationally equivalent (see \cite[Theorem 5.1]{BFMQ24}). This suggests studying a third form of equivalence for cubic fourfolds:
\begin{enumerate}
    \item[({\bf{BF}})] birational equivalence of the Fano varieties of lines.
\end{enumerate}

It is natural to ask how these notions of equivalence are related. Indeed, in view of the famous rationality conjectures for cubic fourfolds (see \cite{hassett}, \cite{Kuznet}, \cite{addington}, \cite{MR4292740}),
Huybrechts conjectured the following:

\begin{conj}[{\cite[Conjecture 7.3.21]{Huybrechtscubicsbook}}]\label{conj:FMimpliesBE}
Suppose $X$ and $X'$ are smooth cubic fourfolds with $\calA_{X}\simeq \calA_{X'}$. Then $X\bir X'$. That is, $\mathrm{(}${\bf{FM}}$\mathrm{)}$ implies $\mathrm{(}${\bf{BE}}$\mathrm{)}$.    
\end{conj}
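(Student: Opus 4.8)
The plan is to establish Conjecture~\ref{conj:FMimpliesBE} by factoring the implication (\textbf{FM})$\Rightarrow$(\textbf{BE}) through the intermediate notion (\textbf{BF}): first show that Fourier--Mukai partners have birationally equivalent Fano varieties of lines, i.e.\ (\textbf{FM})$\Rightarrow$(\textbf{BF}), and then that (\textbf{BF})$\Rightarrow$(\textbf{BE}). For the first implication, recall that $F(X)$ is realized as a moduli space $M_\sigma(\calA_X,\lambda)$ of $\sigma$-stable objects in the Kuznetsov component (\cite{Kuznet}) for a suitable stability condition $\sigma$ and Mukai vector $\lambda$, and that by a derived Torelli theorem for cubic fourfolds an equivalence $\calA_X\simeq\calA_{X'}$ is induced by an orientation-preserving Hodge isometry of Mukai lattices $\widetilde H(\calA_X,\bZ)\xrightarrow{\ \sim\ }\widetilde H(\calA_{X'},\bZ)$. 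I would transport $\sigma$ and $\lambda$ across such an equivalence to obtain $F(X)\cong M_{\sigma'}(\calA_{X'},\lambda')$ for some $\lambda'$ of the same square and divisibility as the Mukai vector cutting out $F(X')$, and then run a wall-crossing argument in the style of Bayer--Macr\`i, adapted to $\calA_{X'}$, to conclude $M_{\sigma'}(\calA_{X'},\lambda')\bir F(X')$, hence $F(X)\bir F(X')$. I expect this step to be within reach of current technology.

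The second implication, (\textbf{BF})$\Rightarrow$(\textbf{BE}), is exactly the conjecture proposed later in this paper, and it is where the genuine difficulty lies. A birational map $F(X)\dashrightarrow F(X')$ of hyperk\"ahler fourfolds of $K3^{[2]}$-type is an isomorphism in codimension one, so it induces a Hodge isometry $H^2(F(X),\bZ)\xrightarrow{\ \sim\ }H^2(F(X'),\bZ)$; by Beauville--Donagi this corresponds, compatibly with the forms, to an isomorphism of Hodge structures on primitive middle cohomology $H^4_{\prim}(X,\bZ)\xrightarrow{\ \sim\ }H^4_{\prim}(X',\bZ)$, and hence (extending over the span of $h^2$) to a Hodge isometry $H^4(X,\bZ)\xrightarrow{\ \sim\ }H^4(X',\bZ)$ --- but in general \emph{not} one sending $h^2$ to $h^2$, since the birational map need not respect the Pl\"ucker polarizations. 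To conclude $X\bir X'$ one therefore needs a ``birational Torelli theorem'' for cubic fourfolds: a Hodge isometry of $H^4(-,\bZ)$, not assumed to preserve the class $h^2$ of square three, should force birational equivalence of the cubics. A more geometric alternative would be to decompose $F(X)\dashrightarrow F(X')$ into a chain of Mukai flops along embedded $\bP^2$'s (Wierzba--Wi\'sniewski), track how the Pl\"ucker polarization transforms across each flop, and try to realize each flop by an explicit birational modification on the cubic side.

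The main obstacle is precisely this last step. No birational Torelli statement for cubic fourfolds is known, and the question is entangled with the rationality conjectures: a Hodge isometry of $H^4$ will in general move $X$ to another member of the same Hassett divisor $\calC_d$, so asserting that these are birational is the sort of statement whose proof would bear directly on rationality. The geometric alternative is equally delicate, as it is not clear that a single Mukai flop of $F(X)$ is even the Fano variety of lines of a cubic fourfold, let alone one birational to $X$ --- the examples constructed in this paper are meant precisely as test cases for this phenomenon. For these reasons I expect that the two implications must be handled separately, the first unconditionally and the second only conditionally, so that what is actually attainable at present is a reduction of Conjecture~\ref{conj:FMimpliesBE} to the (\textbf{BF})$\Rightarrow$(\textbf{BE}) conjecture rather than a proof of it.
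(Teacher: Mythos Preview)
The statement you are attempting to prove is a \emph{conjecture}: the paper does not prove it, but only provides corroborating evidence via the special families $\calC_{nonsyz}$ and $\calC_{M_1}$. So there is no ``paper's own proof'' to match.

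More importantly, your proposed factorization (\textbf{FM})$\Rightarrow$(\textbf{BF})$\Rightarrow$(\textbf{BE}) cannot work, because the first implication is \emph{false}. This is exactly what Section~\ref{sec:546} of the paper establishes: for $X\in\calC_{546}$ very general there is a nontrivial Fourier--Mukai partner $X'$ with $\calA_X\simeq\calA_{X'}$ but $F(X)$ and $F(X')$ are \emph{not} birational (Theorem~\ref{theorem:546}). The red arrow marked ``$\times$, $d=546$'' in the diagram of implications records precisely this counterexample to (\textbf{FM})$\Rightarrow$(\textbf{BF}).

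The specific gap in your wall-crossing step is the claim that $M_{\sigma'}(\calA_{X'},\lambda')\bir F(X')$. Transporting the Mukai vector across the equivalence gives a $\lambda'$ with the correct square and divisibility, but wall-crossing in the sense of Bayer--Macr\`i only relates moduli spaces for the \emph{same} Mukai vector under varying stability conditions. To pass from $\lambda'$ to the Mukai vector $\lambda_1$ cutting out $F(X')$ you would need a Hodge autoisometry of $\widetilde H(\calA_{X'},\bZ)$ taking $\lambda'$ to $\lambda_1$, and there is no reason for one to exist: two primitive vectors of the same square and divisibility in the algebraic Mukai lattice need not lie in the same orbit under the Hodge-monodromy group. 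The $\calC_{546}$ example is exactly an instance where they do not. So the reduction you envision, ``(\textbf{FM}) unconditionally implies (\textbf{BF}), and the hard part is Conjecture~\ref{conj:BFimpliesBE}'', is not available; any attack on Conjecture~\ref{conj:FMimpliesBE} must proceed by a different route.
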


A very general cubic fourfold, i.e. $X\not\in \cup\, \calC_d$, has no non-trivial Fourier-Mukai partners by \cite{Huy17}. The conjecture becomes interesting for $X\in \calC_d,$  with some evidence provided by Fan and Lai \cite{FL24} who verified the conjecture for a very general cubic in $\calC_{20}$. On the other hand, it is well known that ({\bf{BE}}) does not imply ({\bf{FM}}), for example by considering two very general cubics in $\calC_{14}$.

By studying two special families of cubics in $\calC_{12}$ (see Theorems \ref{thm: nonsyz} and \ref{theorem:C8C12}), we provide two new corroboratory examples for \Cref{conj:FMimpliesBE}. The first is the family $\calC_{nonsyz}$, parametrizing cubics containing a non-syzygetic pair of cubic scrolls, introduced in \cite{BFMQ24}. The second is a component $\calC_{M_1}$ of $\calC_8\cap \calC_{12}$, studied in \cite{BolognesiRusso}, \cite{yang2021lattice}, and \cite{C8C12}, parametrizing cubics containing a plane and a cubic scroll intersecting in a point. For a general cubic fourfold $X$ in either of these two families, we prove the existence of another non-isomorphic cubic fourfold $X'$ (contained in the same family) which is both a Fourier Mukai partner of $X$, and is birationally equivalent to $X$. We also find that $X$ and $X'$ have birationally equivalent Fano varieties of lines in both cases.

Motivated by these families of cubics, as well as by our study of the Fano variety of lines on the general member of $\calC_{20}$, we suggest the following new conjecture:

\begin{conj}\label{conj:BFimpliesBE}
    Suppose $X$ and $X'$ are smooth cubic fourfolds with birationally equivalent Fano varieties of lines. Then $X\bir X'$. That is,  $\mathrm{(}${\bf{BF}}$\mathrm{)}$ implies $\mathrm{(}${\bf{BE}}$\mathrm{)}$.
\end{conj}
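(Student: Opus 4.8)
The natural strategy is to factor the implication $(\mathbf{BF})\Rightarrow(\mathbf{BE})$ through Fourier--Mukai partnership, reducing \Cref{conj:BFimpliesBE} to \Cref{conj:FMimpliesBE}. Thus I would first prove that $(\mathbf{BF})$ implies $(\mathbf{FM})$. If the Fano varieties of lines $F(X)$ and $F(X')$ are birational, then --- being hyperkähler fourfolds of $K3^{[2]}$-type --- the birational map is a parallel transport operator and induces a Hodge isometry $H^2(F(X),\bZ)\xrightarrow{\sim}H^2(F(X'),\bZ)$ for the Beauville--Bogomolov--Fujiki forms, preserving the orientation of the positive cone. Via the Beauville--Donagi isomorphism this identifies the transcendental Hodge structures of $F(X)$ and $F(X')$, hence those of $\calA_X$ and $\calA_{X'}$ (which coincide, up to Tate twist, with the transcendental Hodge structures of $X$ and $X'$). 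One would then extend the isometry across the rank-two algebraic part of the Mukai lattice by a discriminant-form (Nikulin-type) argument, arrange orientation-preservation on the full $24$-dimensional Mukai lattice by composing with reflections in $(-2)$-classes if needed, and apply a derived global Torelli theorem for cubic fourfolds to conclude $\calA_X\simeq\calA_{X'}$.

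The second step, $(\mathbf{FM})\Rightarrow(\mathbf{BE})$, is exactly \Cref{conj:FMimpliesBE}, which is open; so this route is only conditional. I expect this to be the real obstacle. An unconditional argument would instead have to produce the birational equivalence $X\bir X'$ directly, and it is unclear how to extract such a birational modification of the cubics from the data witnessing the birationality of the Fano varieties --- that is, from the chain of flopping contractions of $F(X)$ and the uniruled divisors they contract. In the special families treated in this paper (cubics containing a non-syzygetic pair of cubic scrolls, or a plane meeting a cubic scroll in a point) the auxiliary geometry simultaneously produces the partner $X'$ \emph{and} an explicit birational map $X\bir X'$; but I know of no mechanism that does this for an arbitrary pair with $F(X)\bir F(X')$, and the same difficulty already arises in analyzing the general member of $\calC_{20}$.

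Two further points deserve attention inside the first step. First, although a bimeromorphism of $K3^{[2]}$-type hyperkählers is orientation-preserving on $H^2$, one must check that this survives the passage through Beauville--Donagi and into the Mukai lattice, where orientation is controlled by the positive-definite part and any correcting reflections must be verified to respect the Hodge structure; the relevant instance of derived Torelli for the cubics in question (which, for the $\calC_{12}$ families above, have no associated K3 surface) may itself require justification. Second, one should not expect the converse $(\mathbf{FM})\Rightarrow(\mathbf{BF})$: Fourier--Mukai partners need not have birational Fano varieties, so \Cref{conj:BFimpliesBE} concerns a strictly stronger hypothesis, and a complete treatment should also identify which $\calC_d$ can contain pairs realizing $(\mathbf{BF})$.
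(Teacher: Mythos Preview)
The statement you were asked to prove is a \emph{conjecture} in the paper, not a theorem; the paper offers no proof and instead only supplies corroborating evidence via the special families $\calC_{nonsyz}$, $\calC_{20}$, and $\calC_{M_1}$. So there is no ``paper's own proof'' to compare against.

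That said, your strategy --- factor $(\mathbf{BF})\Rightarrow(\mathbf{BE})$ as $(\mathbf{BF})\Rightarrow(\mathbf{FM})$ followed by \Cref{conj:FMimpliesBE} --- is exactly the approach the authors themselves flag in the introduction: ``showing that $(\mathbf{BF})$ implies $(\mathbf{FM})$ along with proving Conjecture~\ref{conj:FMimpliesBE} would imply Conjecture~\ref{conj:BFimpliesBE}.'' You correctly identify that the second step is open, so your route is at best conditional, and you are honest about this.

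One point of caution on the first step. You sketch $(\mathbf{BF})\Rightarrow(\mathbf{FM})$ via the Hodge isometry $H^2(F(X),\bZ)\cong H^2(F(X'),\bZ)$ induced by the birational map, transported through Beauville--Donagi to $T(X)\cong T(X')$, and then extended across the algebraic part of the Mukai lattice by a Nikulin argument. The paper explicitly says ``We are not aware of any general results about whether \ldots $(\mathbf{BF})$ implies $(\mathbf{FM})$,'' so the authors regard this implication as open. Your sketch is plausible, and the obstacle you yourself flag --- making the extension to the full Mukai lattice orientation-preserving and invoking the correct derived Torelli statement for $\calA_X$ --- is precisely where one would need to do real work. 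Until that is nailed down, the first step is also not established, and your proposal should be read as an outline of a program rather than a proof.
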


Conversely, the fact that ({\bf{BE}}) does not imply ({\bf{BF}}) is trivial: for example, one can again consider two very general cubics in $\calC_{14}$, which are both rational. Our examples in Sections~\ref{sec:nonsyz} through~\ref{sec:546} are conjecturally irrational.

The Fano variety of lines on a cubic fourfold is a smooth hyperk\"ahler fourfold of K3$^{[2]}$-type \cite{BD85}. 
It is known that two hyperk\"ahler fourfolds of K3$^{[2]}$-type that are birational are derived equivalent \cite{kawamata}\cite{namikawa} (see also \cite{MS24} for a more general statement). This motivates a fourth notion of equivalence between cubic fourfolds:
\begin{enumerate}
    \item[({\bf{DF}})] derived equivalence of the Fano varieties of lines.
\end{enumerate}

It has been conjectured that ({\bf{FM}}) implies ({\bf{DF}}) \cite[Remark 7.3.28(iii)]{Huybrechtscubicsbook}, for which we provide corroboratory evidence (see Theorems~\ref{thm: nonsyz}, \ref{theorem:20}, and \ref{theorem:C8C12}). On the other hand, we prove that neither ({\bf{DF}}) nor ({\bf{FM}}) implies ({\bf{BF}}) by considering the very general member of $\calC_{546}$ (see \Cref{theorem:546}). 
The fact that ({\bf{BE}}) does not imply ({\bf{DF}}) follows from the fact that hyperk\"ahler fourfolds have finitely many Fourier-Mukai partners \cite[Theorem~9.4]{Beckmann}.

The following diagram summarizes the various implications, counterexamples, and conjectures discussed above:

\begin{equation*}
    \begin{tikzcd}[column sep=large, row sep=large, arrows={crossing over}]
         X \text{ is birational to } X' \arrow[rr, Rightarrow, "\times" marking, yshift=1ex, color=red, "d=14"{yshift=2pt}]  \arrow[ddd, Rightarrow, "\times" marking, xshift=1ex, color=red, "d=14"{xshift=2pt}] \arrow[dddrr, Rightarrow, xshift=1.2ex, color=red, "\times" marking, near end, "d=14"] && 
         \arrow[ll, Rightarrow, yshift=-1ex, color=blue, "\text{(3) Conj.~\ref{conj:BFimpliesBE}}"]   F \text{ is birational to } F' \arrow[ddd, Rightarrow, xshift=1ex, "\text{Known}"]  \\
         && \\
         && \\
        \calA_X \simeq \calA_{X'}  \arrow[rr, Rightarrow, yshift=0ex, color=blue, "\text{(2) Conj.}", "\text{\cite[7.3.28(iii)]{Huybrechtscubicsbook}}" '] \arrow[uuu, Rightarrow, xshift=-1ex, color=blue, "\text{(1) Conj.~\ref{conj:FMimpliesBE}}"] \arrow[uuurr, Rightarrow, xshift=-1.2ex, color=red, "\times" marking, near end, "d=546"] &&  
        D^b(F)\simeq D^b(F') \arrow[uuu, Rightarrow, xshift=-1ex, color=red, "\times" marking, "d=546"{xshift=-2pt}] \\
    \end{tikzcd}
\end{equation*}
We add to the body of supporting evidence for each numbered arrow, proving that the very general member of each family listed below satisfies both conditions.
\begin{enumerate}
    \item[(1)]  $\calC_{nonsyz}$ and $\calC_{M_1}$
    \item[(2)]  $\calC_{nonsyz}$, $\calC_{20}$, and $\calC_{M_1}$
    \item[(3)]  $\calC_{nonsyz}$, $\calC_{20}$, and $\calC_{M_1}$
\end{enumerate}

We are not aware of any general results about whether ({\bf{DF}}) implies ({\bf{FM}}), ({\bf{DF}}) implies ({\bf{BE}}), or ({\bf{BF}}) implies ({\bf{FM}}), although the first holds for $X\in C_d$ with $d$ satisfying $(**)$, by \cite[Theorem~1]{AddingtonTwoRatConjs} and \cite[Corollary~9.7]{Beckmann}. Moreover, the families we study in Sections~\ref{sec:nonsyz} through~\ref{sec:C8C12} give evidence for ({\bf{BF}}) implies ({\bf{FM}}); this implication is particularly relevant to the conjectures discussed above. Indeed, showing that ({\bf{BF}}) implies ({\bf{FM}}) along with proving Conjecture~\ref{conj:FMimpliesBE} would imply Conjecture~\ref{conj:BFimpliesBE}. 
Determining whether any of the equivalence relations ({\bf{BF}}), ({\bf{FM}}), or ({\bf{DF}}) considered here implies ({\bf{BE}}) seems quite difficult, since no pair of cubic fourfolds is known to be birationally inequivalent.
\smallskip

\subsection*{Outline} In \Cref{sec: prelims}, we recall the definition of special cubic fourfolds, rationality conjectures, and necessary results on the birational geometry of hyperk\"ahler fourfolds and Fano variety of lines. We also introduce Gushel-Mukai fourfolds and double EPW sextics. In \Cref{sec:nonsyz}, we study equivalences discussed above for cubic fourfolds contained in $\mathcal{M}_{nonsyz},$ i.e. containing a non-syzygetic pair of cubic scrolls. In \Cref{sec:20} we study cubic fourfolds contained in $\calC_{20}$. In \Cref{sec:C8C12}, we study cubic fourfolds containing both a plane and a cubic scroll, focusing on $\calC_{M_1}$, which is one of three families in $\calC_8\cap \calC_{12}$. Finally in \Cref{sec:546}, we study cubic fourfolds in $\calC_{546}$. Our choice to consider $\calC_{546}$ is motivated by numerical conditions guaranteeing that $F$ is birational to the Hilbert square of a K3 with a nontrivial Fourier-Mukai partner (see \Cref{remark:546numerics} for details).

\subsection*{Acknowledgements}
We would like to thank Asher Auel for offering helpful comments on the first version of this paper. We would like to thank Nick Addington for pointing out an error in an earlier version.

S.F. was supported in part by NSF grant DMS-2401601. Computations of orthogonal groups were done in Magma \cite{Magma}.

\section{Preliminaries}\label{sec: prelims}

\subsection{Special cubic fourfolds}\label{subsec:special}
For a smooth cubic fourfold $X$, let
\[
A(X)=H^4(X,\bZ)\cap H^{2,2}(X,\bC)
\]
be the lattice of algebraic cycles and $\eta_X$ the square of the hyperplane class. We define
\[
H^4(X,\bZ)_{prim}=\langle\eta_X\rangle^\perp\subset H^4(X,\bZ)
\]
and
\[
A(X)_{prim}=H^4(X,\bZ)_{prim}\cap A(X).
\]
We have that $A(X)_{prim}=0$ for a very general cubic fourfold, but there are countably many divisors $\calC_d$ in the moduli space of cubic fourfolds which parametrize those with $A(X)_{prim}\neq0$. An $X\in\calC_d$ is such that $\eta_X\in K\subset A(X)$ where $K$ is a primitive sublattice of rank two, and the discriminant of the intersection form on $K$ is $d$. We call $K$ a labeling of $X$.

Since the integral Hodge conjecture holds for cubic fourfolds, every class in $A(X)$ is algebraic, and cubics in $\calC_d$ for some $d$ contain algebraic surfaces not homologous to complete intersections \cite[Theorem 1.4]{voisinhodge}. We give a few examples that appear in this paper:
\begin{itemize}
    \item every member of $\calC_8$ contains a plane \cite{voisintorelli},
    \item the general member of $\calC_{12}$ contains a cubic scroll \cite{hassett-thesis}, and
    \item the general member of $\calC_{20}$ contains a Veronese surface \cite{hassett-thesis}.
\end{itemize}
Whereas neither a plane nor a Veronese surface deform in a cubic fourfold, the general member of $\calC_{12}$ in fact contains two nets of cubic scrolls, as mentioned in \cite{hassett-thesis} and expanded on in \cite{flops}. More specifically, if $T\subset X$ is a cubic scroll, $\langle T \rangle$ is the hyperplane spanned by $T$, and $Q$ is a quadric containing $T$, then $X\cap Q\cap \langle T \rangle=T\cup T^\vee$ where $T^\vee$ is another cubic scroll.

One can articulate rationality conjectures for cubic fourfolds in terms of labelings. By \cite{hassett-thesis}, the discriminants $d$ for which $\calC_d$ is nonempty are those satisfying
\[
(*):\;d>6\text{ and }d\equiv0\text{ or }2\bmod 6.
\]
Cubics in $\calC_d$ satisfying
\[
(**):\;d\text{ satisfies }(*),\text{ and is not divisible by 4, 9, or any odd prime }p\equiv2\bmod3
\]
have an associated K3 surface $S$, meaning $\widetilde{H}(\calA_X,\bZ)\cong\widetilde{H}(S,\bZ)$. All cubic fourfolds known to be rational belong to $\calC_d$ for some $d$ satisfing $(**)$, leading to the famous conjecture that cubic fourfolds are rational if and only if they admit a labelling of disrciminant $d$ satisfying $(**)$. Kuznetsov conjectured in \cite{kuzcubic} that $X$ is rational if and only if $\calA_X\simeq D^b(S)$ for some K3 surface $S$, which was shown in \cite{AddingtonTwoRatConjs} and \cite{MR4292740} to coincide with the previous conjecture.

A competing conjecture proposed by Galkin and Shinder in \cite{galkinshinder} is that a cubic fourfold $X$ is rational if and only if its Fano variety of lines is birational to $\Hilb^2(S)$ for some K3 surface $S$. This condition was shown in \cite{AddingtonTwoRatConjs} to be equivalent to $X$ admitting a labeling of discriminant $d$ satisfying 
\[
(***):d\text{ satisfies }(*),\text{ and }d=\frac{2n^2+2n+2}{a^2}\text{ for some }a,n\in\bZ.
\]
Moreover, $(***)$ implies $(**)$, and this condition is stricter.

\subsection{Birational geometry of hyperk\"ahler fourfolds}
We briefly recall how one can study the birational geometry of a hyperk\"ahler fourfold $F$ from the movable cone. Throughout, $F$ is a projective hyperk\"ahler manifold of K3$^{[2]}$ type.
The second cohomology $H^2(F,\bZ)$ is equipped with an integral, symmetric, nondegenerate bilinear form $q_F$, namely the Beauville-Bogomolov-Fujiki (BBF) form. For $v, w\in H^2(F,\bZ)$, we denote by $v^2=q_F(v)$ and by $v\cdot w= q_F(v,w)$. The divisibility $\mathrm{div}(v)$ of $v$ is the least positive generator of the ideal $v\cdot H^2(F,\bZ)\subset \bZ$. 

We denote by $\Mon^2_{Hdg}(F)$ the subgroup of monodromy operators in $\Mon^2(F)$ that preserve the Hodge structure. Recall that $\Mon^2_{Hdg}(F)=W_{Exc}\rtimes W_{Bir}$, where $W_{Exc}$ is the subgroup generated by reflections determined by stably prime exceptional divisors, and $W_{Bir}$ is the subgroup of monodromy operators induced by birational transformations. 

\begin{theorem}\cite[Theorem 1.3]{markman}\label{thm:torelli}
    Let $F$ be a projective hyperk\"ahler manifold.
    Let $g\in \Mon^2_{Hdg}(F)$. Then there exists $f\in \Bir(F)$ such that $f^*=g$ if and only if $g^*\Mov(F)=\Mov(F)$. Further, $f\in \Aut(F)$ if and only if $g^*\Amp(F)=\Amp(F)$.
\end{theorem}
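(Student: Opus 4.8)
The plan is to exploit the two structural facts recalled just above: the semidirect product decomposition $\Mon^2_{Hdg}(F)=W_{Exc}\rtimes W_{Bir}$, together with the (external) fact, due to Markman, that the closure $\overline{\Mov}(F)$, intersected with the positive cone of $F$, is a fundamental domain for the action of the reflection group $W_{Exc}$ on that cone. I would first dispatch the two ``only if'' implications, which are the elementary ones. Any $f\in\Bir(F)$ is an isomorphism in codimension one (hyperk\"ahler manifolds being minimal models), so $f^{*}$ is a well-defined Hodge isometry of $H^{2}(F,\bZ)$, and in fact a parallel transport operator, hence an element of $\Mon^2_{Hdg}(F)$; moreover $f^{*}$ identifies movable divisor classes on the two sides, so $f^{*}\Mov(F)=\Mov(F)$, and if in addition $f\in\Aut(F)$ then $f^{*}$ also preserves ampleness, giving $f^{*}\Amp(F)=\Amp(F)$.

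For the substantive ``if'' direction of the first assertion, suppose $g\in\Mon^2_{Hdg}(F)$ satisfies $g\,\Mov(F)=\Mov(F)$. Using the semidirect decomposition, write $g=w\cdot b$ with $w\in W_{Exc}$ and $b\in W_{Bir}$. By the very definition of $W_{Bir}$ we have $b=f^{*}$ for some $f\in\Bir(F)$, so $b\,\Mov(F)=\Mov(F)$ by the elementary direction above, and therefore $w\,\Mov(F)=\Mov(F)$ as well. But $W_{Exc}$ is generated by the reflections attached to the classes of stably prime exceptional divisors, the hyperplanes orthogonal to those classes being precisely the walls that bound $\overline{\Mov}(F)$ inside the positive cone; since $\overline{\Mov}(F)$ is a fundamental domain for this reflection group, its stabilizer in $W_{Exc}$ is trivial. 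Hence $w=\id$ and $g=b=f^{*}$ with $f\in\Bir(F)$, as claimed.

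Finally I would upgrade to automorphisms: assuming in addition that $g\,\Amp(F)=\Amp(F)$, we already know from the above that $g=f^{*}$ for some $f\in\Bir(F)$, and then $f^{*}$ is a parallel transport Hodge isometry of $H^{2}(F,\bZ)$ carrying the ample cone of $F$ onto itself; by the global Torelli theorem for hyperk\"ahler manifolds of Verbitsky and Markman, such an operator is induced by a biregular automorphism, so there is $f'\in\Aut(F)$ with $(f')^{*}=g$, completing the argument. I expect the genuine difficulty to lie not in this bookkeeping but in the two imported inputs it rests on: the semidirect decomposition of $\Mon^2_{Hdg}(F)$, and, above all, the identification of $\overline{\Mov}(F)$ as a fundamental domain for $W_{Exc}$, which itself depends on Boucksom's divisorial Zariski decomposition and on the classification of prime exceptional divisors and their associated monodromy reflections.
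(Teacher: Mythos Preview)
The paper does not give its own proof of this statement: Theorem~\ref{thm:torelli} is quoted verbatim from \cite[Theorem 1.3]{markman} and used as a black box, so there is no argument in the paper to compare your proposal against.

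That said, your sketch is a reasonable reconstruction of how the result is obtained in Markman's survey, with one caveat worth flagging. You take the semidirect decomposition $\Mon^2_{Hdg}(F)=W_{Exc}\rtimes W_{Bir}$ as an independent input and deduce the theorem from it, but in Markman's treatment these two statements are proved together, both resting on the same core fact you correctly identify: that the closure of $\Mov(F)$ in the positive cone is a fundamental domain for $W_{Exc}$. So while your bookkeeping is sound, the logical order is slightly misleading: the fundamental domain property is the genuine input, and from it one derives \emph{both} the semidirect decomposition and the characterization of $W_{Bir}$ as the stabilizer of $\Mov(F)$ in $\Mon^2_{Hdg}(F)$, rather than one from the other. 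Your final paragraph, invoking global Torelli to upgrade a birational map preserving $\Amp(F)$ to an automorphism, matches Markman's argument.
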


Let $\overline{\mathrm{Pos}(F)}$ be the component of the cone $\{x\in \NS(F)\otimes \bR \mid x^2 \geq 0\}$ that contains an ample class. We denote by $\Mov(F)\subset \overline{\mathrm{Pos}(F)}$ the (closed) convex cone generated by classes of line bundles on $F$ whose base locus has codimension at least $2$.
We define the following set of divisors:
\begin{align*}
    \calW_{\pex}&\coloneqq \{\rho \in \NS(F) \mid \rho^2=-2 \}\\
    \calW_{\flop}&\coloneqq \{\rho\in \NS(F) \mid \rho^2=-10, \textrm{div}(\rho)=2\}.
\end{align*}
A divisor $\rho\in \calW_{\pex}\cup \calW_{\flop}$ is called a wall divisor \cite[Definition 1.2, Proposition 2.12]{MR3423735}, and $\rho\in \calW_{\pex}$ is moreover referred to as a stably prime exceptional divisor. The following structure theorem for $\Mov(F)$ and $\Amp(F)$ in the case of hyperk\"ahler manifolds of K3$^{[2]}$ type combines the results of Markman \cite[Lemma 6.22, Proposition 6.10, 9.12, Theorem 9.17]{markman} and  Bayer, Hassett and Tschinkel \cite[Theorem 1]{BHT}.

\begin{theorem}\cite[Theorem 3.16]{debarre2020hyperkahler}\label{thm: Mov and Amp}
    Let $F$ be a hyperk\"ahler manifold of K3$^{[2]}$ type.

    \begin{enumerate}
        \item The interior of $\Mov(F)$ is the connected component of $$\overline{\mathrm{Pos}(F)}\setminus \bigcup_{\rho\in \calW_{\pex}} \rho^\perp$$ that contains the class of an ample divisor.
        \item The ample cone $\Amp(F)$ is the connected component of 
        $$\overline{\mathrm{Pos}(F)}\setminus \bigcup_{\rho\in \calW_{\pex}\cup \calW_{\flop}} \rho^\perp$$ that contains the class of an ample divisor.
    \end{enumerate}
\end{theorem}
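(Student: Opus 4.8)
The plan is to derive both statements from the Hodge-theoretic Torelli theorem (\Cref{thm:torelli}) together with the semidirect product decomposition $\Mon^2_{Hdg}(F)=W_{Exc}\rtimes W_{Bir}$, reducing the problem to understanding the two hyperplane arrangements fixed by these reflection groups. We work throughout inside the connected component $\overline{\mathrm{Pos}(F)}$ of the positive cone containing an ample class; every reflection $R_\rho$ with $\rho^2<0$ preserves this component, so the arrangements below are genuinely cut out there.

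\textbf{Part (1), the movable cone.} By definition $\Mov(F)\subseteq\overline{\mathrm{Pos}(F)}$, and an ample class lies in the interior of $\Mov(F)$ since ample line bundles are base-point free. The key input is Markman's result that $\Mov(F)$ is (the closure of) a fundamental domain for the action of $W_{Exc}$ on $\overline{\mathrm{Pos}(F)}$ (\cite[Proposition 6.10, Lemma 6.22]{markman}), so that $\overline{\mathrm{Pos}(F)}$ is tiled by the $W_{Exc}$-translates of $\Mov(F)$. Since $W_{Exc}$ is generated by the reflections $R_\rho$ determined by stably prime exceptional divisors, the walls of this tiling are exactly the hyperplanes $\rho^\perp$ for such $\rho$. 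It then remains to identify this set of classes, on a manifold of K3$^{[2]}$-type, with $\calW_{\pex}=\{\rho\in\NS(F):\rho^2=-2\}$: a prime exceptional divisor $E$ on this deformation type has $q(E)=-2$ or $q(E)=-8$, and in either case $R_E=R_\rho$ for a primitive class $\rho$ with $\rho^2=-2$; conversely, every primitive class $\rho$ with $\rho^2=-2$ is carried by some element of $W_{Exc}$, after possibly replacing $\rho$ by $-\rho$, to a class $\rho'$ with $R_{\rho'}$ the reflection determined by a prime exceptional divisor on a birational model of $F$. Granting this, $\mathrm{int}\,\Mov(F)$ is precisely the connected component of $\overline{\mathrm{Pos}(F)}\setminus\bigcup_{\rho\in\calW_{\pex}}\rho^\perp$ containing an ample class.

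\textbf{Part (2), the ample cone.} Since ample implies movable, $\Amp(F)\subseteq\mathrm{int}\,\Mov(F)$, so it suffices to describe how the interior of a single movable cone is subdivided into the ample (equivalently nef) cones of the marked birational models of $F$ lying over it. By Markman (\cite[Proposition 9.12, Theorem 9.17]{markman}) this is again a locally finite chamber decomposition, whose chambers are exactly those nef cones, and the Bayer--Hassett--Tschinkel description of the nef cone of a projective hyperk\"ahler fourfold of K3$^{[2]}$-type (\cite[Theorem 1]{BHT}) identifies its walls inside $\Mov(F)$ as the hyperplanes $\rho^\perp$ with $\rho^2=-10$ and $\mathrm{div}(\rho)=2$, i.e.\ $\rho\in\calW_{\flop}$ (these record the flopping contractions of the relevant model, for instance flops of a plane). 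Combining with part (1), removing from $\overline{\mathrm{Pos}(F)}$ the union of all $\rho^\perp$ with $\rho\in\calW_{\pex}\cup\calW_{\flop}$ breaks it into the ample cones of all marked birational models of $F$, and the component containing an ample class of $F$ itself is $\Amp(F)$; the second assertion of \Cref{thm:torelli} then recovers $\Aut(F)$ from the stabilizer of this chamber.

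\textbf{The main obstacle.} The heart of the matter is the two lattice-theoretic classifications invoked above: that the reflection classes of stably prime exceptional divisors on K3$^{[2]}$-type are exactly the primitive $(-2)$-classes, and that the flopping walls are exactly the $(-10)$, divisibility-$2$ classes. Neither is formal. The standard route to both is a deformation argument: deform the pair $(F,\rho)$ in the moduli of marked pairs, keeping $\rho$ of Hodge type, until one reaches an explicit model --- a moduli space of Gieseker-stable sheaves, or of Bridgeland-stable objects, on a projective K3 surface --- where the extremal contraction attached to $\rho$ can be exhibited and its discrete invariants computed directly, using wall-crossing for Bridgeland stability conditions; one then transports the conclusion back via surjectivity of the period map and \Cref{thm:torelli}, and checks that the relevant monodromy orbit has the predicted numerical signature. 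Once these inputs are available, the chamber combinatorics assemble parts (1) and (2) exactly as above.
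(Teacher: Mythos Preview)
The paper does not give its own proof of this statement: it is quoted verbatim as \cite[Theorem 3.16]{debarre2020hyperkahler}, preceded only by the remark that the result ``combines the results of Markman \cite[Lemma 6.22, Proposition 6.10, 9.12, Theorem 9.17]{markman} and Bayer, Hassett and Tschinkel \cite[Theorem 1]{BHT}.'' There is nothing further to compare against.

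Your sketch is consistent with exactly those inputs: you invoke Markman's fundamental-domain description of $\Mov(F)$ for $W_{Exc}$ and the chamber decomposition of the movable cone into nef cones of birational models, and you invoke \cite{BHT} for the numerical identification of the walls in the K3$^{[2]}$ case. That is precisely the combination the paper attributes the theorem to, so in spirit you are retracing the intended argument rather than offering an alternative. One small caution: your parenthetical that a prime exceptional divisor on K3$^{[2]}$-type has $q(E)=-2$ or $q(E)=-8$ is not quite the standard classification (the relevant dichotomy is by divisibility, with $q(E)=-2$ in both cases for the primitive class generating the reflection); this does not affect the logic, since what you actually need and use is that the associated reflections are exactly those in $(-2)$-classes, which is correct.
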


Each chamber of $\Mov(F)$ corresponds to the pullback of $\Nef(F')$ of a birational model $F'$ of $F$, and $\Mon^2_{Hdg}(F)= W_{Exc}\rtimes W_{Bir}$ acts transitively on the chambers of $\bigcup_{\rho\in \calW_{\pex}} \rho^\perp$ \cite[Lemma 5.11 (5)]{markman}.

\subsection{Fano varieties of lines}\label{subsec:Fano} The Fano variety of lines $F$ on a cubic fourfold is a hyperk\"ahler manifold of K3$^{[2]}$ type. 
The Abel-Jacobi map $\alpha\colon H^4(X,\bZ)\rightarrow H^2(F,\bZ)$ restricts to an isomorphism $H^4(X,\bZ)_{prim}\xrightarrow{\sim} H^2(F,\bZ)_{prim}$ which is a Hodge isometry (up to sign) under the respective quadratic forms. Explicitly, we have $-x\cdot y=\alpha(x)\cdot\alpha(y)$.

The variety $F$ is naturally equipped with the Pl\"ucker polarization $g\in \NS(F)$, with $g^2=6$ and divisibility $2$. Recall that $\calM_6^{(2)}$ is the moduli space of hyperk\"ahler fourfolds of K3$^{[2]}$ type equipped with a polarization $L$ with $L^2=6$ and divisibility $2$. There is a period map:
$p\colon \calM_6^{(2)}\rightarrow\calD_6^{(2)},$ where 
$\calD_6^{(2)}$ is the relevant period domain (see for instance \cite[Appendix B]{debarre2020hyperkahler}).

\begin{proposition}\label{prop: existence of cubic}
    Let $F$ be the Fano variety of lines for a cubic fourfold $X$ with Pl\"ucker polarization $g$. Suppose that
    \begin{enumerate}
        \item there exists $h\in \Mov(F)$ such that $h^2=6$ and $\mathrm{div}(h)=2$, and
        \item there exists $\phi\in \Orth(\NS(F))$ such that $\phi(g)=h$.
    \end{enumerate}
     Then there exists a smooth cubic fourfold $X'$ with Fano variety of lines $F'$ and a birational equivalence $\pi:F'\dashrightarrow F$ such that $\pi^*(h)$ is the Pl\"ucker class. Moreover, $X\cong X'$ if and only if there exists $\phi\in \Bir(F)$ such that $\phi^*(h)=g$.    
     \end{proposition}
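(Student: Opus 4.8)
The plan is to move $h$ onto an ample polarization of a birational model of $F$ by means of Markman's structure theorem for the movable cone, and then to recognize that model as a Fano variety of lines via the Torelli theorem for cubic fourfolds.

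First I would check that $h$ lies in the interior of a chamber of $\Mov(F)$. Since $g$ is ample, \Cref{thm: Mov and Amp} shows $g$ is orthogonal to no class of $\calW_{\pex}\cup\calW_{\flop}$; as $\phi\in\Orth(\NS(F))$ permutes the set $\calW_{\pex}$ of $(-2)$-classes, it follows that $h=\phi(g)$ is orthogonal to no class of $\calW_{\pex}$, so together with the hypothesis $h\in\Mov(F)$ and \Cref{thm: Mov and Amp}(1) this places $h$ in the interior of $\Mov(F)$. Also $h$ is orthogonal to no $\rho\in\calW_{\flop}$: otherwise $(h+\rho)^2=6-10=-4$, while $\mathrm{div}(h)=\mathrm{div}(\rho)=2$ together with the primitivity of $h$ and $\rho$ force $(h+\rho)/2\in H^2(F,\bZ)$, of square $-1$, contradicting the evenness of the Beauville--Bogomolov--Fujiki form. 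Hence $h$ lies in the interior of a single chamber of $\Mov(F)$, which by Markman's chamber decomposition is carried by $\pi^*$ isomorphically onto $\Amp(F')$ for some hyperk\"ahler fourfold $F'$ of K3$^{[2]}$ type equipped with a birational map $\pi\colon F'\dashrightarrow F$; set $h'\coloneqq\pi^*(h)$.

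Next I would show that $(F',h')$ is the Fano variety of lines of a smooth cubic fourfold, with $h'$ the Pl\"ucker class. Since $\pi$ induces a parallel-transport isometry on second cohomology, $h'$ is ample with $(h')^2=6$ and $\mathrm{div}(h')=2$, so $(F',h')$ defines a point of $\calM_6^{(2)}$; moreover $\pi^*$ restricts to an isometry $(\NS(F),h)\xrightarrow{\sim}(\NS(F'),h')$ compatible with the embeddings into $H^2(-,\bZ)$, and $\phi^{-1}\in\Orth(\NS(F))$ carries $(\NS(F),h)$ to $(\NS(F),g)$. I then invoke the global Torelli theorem for cubic fourfolds together with the determination of the image of the period map (Voisin; Laza; Looijenga): a point of $\calM_6^{(2)}$ is the Fano variety of lines of a smooth cubic fourfold, with its polarization the Pl\"ucker class, exactly when its period point avoids the two Heegner divisors of $\calD_6^{(2)}$ missing from the image of the period map for smooth cubics. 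Whether the period point of $(F',h')$ avoids these divisors depends only on whether $\NS(F')$ contains a rank-two primitive sublattice through $h'$ of the relevant (bad) type — a condition preserved under both isometries above — so it is equivalent to the statement that $\NS(F)$ contains no rank-two primitive sublattice through $g$ of bad type, which holds because $X$ is smooth and therefore has no labeling of discriminant $\le 6$. Hence $F'=F(X')$ for a smooth cubic fourfold $X'$ with Pl\"ucker class $h'=\pi^*(h)$, which is the first assertion.

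For the isomorphism criterion, recall that by the Torelli theorem for cubic fourfolds $X\cong X'$ holds if and only if there is an isomorphism $\alpha\colon F\xrightarrow{\sim}F'$ with $\alpha^*(h')=g$. Given such $\alpha$, the birational self-map $\pi\circ\alpha$ of $F$ lies in $\Bir(F)$ and satisfies $(\pi\circ\alpha)^*(h)=\alpha^*(h')=g$. Conversely, if $\psi\in\Bir(F)$ satisfies $\psi^*(h)=g$, then $\psi^{-1}\circ\pi\colon F'\dashrightarrow F$ pulls the ample class $g$ back to the ample class $h'=\pi^*(h)$, so by \Cref{thm:torelli} it is an isomorphism carrying $g$ to $h'$, whence $X\cong X'$. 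The genuinely delicate step is the second one: one must exclude the possibility that the model $F'$ produced in the first step is a birational model of some other kind (for instance the Hilbert square of a K3 surface) rather than a Fano variety of lines, and it is precisely hypothesis~(2) — through the identification of the polarized lattice $(\NS(F'),h')$ with $(\NS(F),g)$ — that forces the period point of $(F',h')$ into the Fano-of-lines locus of $\calD_6^{(2)}$; the remaining steps are formal consequences of Markman's theorems and of Torelli for cubic fourfolds.
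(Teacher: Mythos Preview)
Your proof is correct and follows the same strategy as the paper's: realize $h$ as an ample class on a birational model $F'$, use the Laza--Looijenga description of the image of the cubic period map together with the isometry $\phi$ to place $(F',h')$ in the Fano-of-lines locus, and then invoke Torelli for cubic fourfolds for the isomorphism criterion. Your argument is in fact more complete than the paper's in one spot: you justify explicitly, via the parity trick showing $(h+\rho)/2\in H^2(F,\bZ)$ would have odd square, why $h$ cannot lie on a flop wall, whereas the paper simply asserts that $h$ is ample on some birational model.
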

\begin{proof}
    By assumptions (1), $h$ is ample on some birational model $F'$ of $F$. The pairs $(F,g)$ and $(F',h)$ define points in the moduli space $\calM_6^{(2)}$, which are distinct if and only if the isometry $\phi\in \Orth(\NS(F))$ of assumption (2) is not induced by a birational isomorphism of $F$. We have the commutative diagram of period maps, where $\calD_6^{(2)}$ is the relevant period domain.

    \[\xymatrix{ \calM_{cub} \ar[rr]^{F} \ar[dr]_{p'} & & \calM_6^{(2)} \ar[dl]^{p} \\ 
            & \calD_6^{(2)} &
    },\]  
    Let $D_{6,i}^{(2)}\subset \calD_6^{(2)}$ be the Heegner divisor which is the image in $\calD_6^{(2)}$ of a hyperplane section of the form $v^\perp$ with $v^2=-i$.
    By \cite{laz09} and \cite{Loo}, the union $D_{6,2}^{(2)}\cup D_{6,6}^{(2)}$ is the complement of the image of the period map $p'$. We claim that $p(F',h)$ lies in the complement of $D_{6,2}^{(2)}\cup D_{6,6}^{(2)}.$  If not, then there exists a $v\in h^\perp\cap \NS(F')$ with $v^2= -2$ or $-6$ and divisibility $2$. Then since $\NS(F')\cong \NS(F)$, we can view $\phi^{-1}\in \Orth(\NS(F'))$, and $\phi^{-1}(v)\cdot g=0;$ this is a contradiction since $p'(X)=p(F,g)$ lies in the complement of $D_{6,2}^{(2)}\cup D_{6,6}^{(2)}$.

It follows that there exists a smooth cubic fourfold $X'$ such that $p'(X')=p(F',h)$. In particular, $(F',h)$ is isomorphic to $F(X')$ with the Pl\"ucker polarization given by $h$. Further, if $(F,g)$ and $(F',h)$ are distinct points of $\calM_6^{(2)}$, then it follows that $X'\not\cong X'$ by the Global Torelli theorem for cubic fourfolds \cite{voisintorelli}. 
\end{proof}

\subsection{Gushel-Mukai fourfolds}\label{subsec: GM4fold}

A Gushel-Mukai fourfold (GM fourfold) $Z$ is a smooth transverse intersection of the form
\[
    Z \coloneqq \Gr(2,V_5) \cap \bP^8\cap Q\subset \bP\left(\bigwedge\nolimits^2V_5\right),
\]
    where $V_5$ is a five dimensional vector space, $\Gr(2,V_5)$ is given the Pl\"ucker embedding, $\bP^8$ is a linear subspace, and $Q$ is a quadric. Given a GM fourfold $Z$, one can construct two Eisenbud-Popescu-Walter sextics (EPW sextics), $W_A$ and its dual $W_{A^\perp}$, constructed from dual Lagrangian subspaces $A\subset \wedge^3V_6$ and $A^\perp\subset \wedge^3V_6^\vee.$ Taking an appropriate double cover of $W_A$ (resp. $W_{A^\perp}$), one obtains a double EPW sextic $\widetilde{W}_A$ (resp. $\widetilde{W}_{A^\perp}$) associated to $Z$, coinciding with the ones constructed and studied by O'Grady in \cites{OG1, OG2, OG3}.

    Let $\calM^{GM}$ be the moduli space of GM fourfolds, and let $\calP\colon \calM^{GM}\rightarrow \calD$ be the corresponding period map, assigning $[Z]\mapsto [H^{3,1}(X)],$ where $\calD$ is the associated 20-dimensional quasiprojective period domain (see \cite[Section 4]{Debarre Survey}). Let $\calM^{EPW}$ be the GIT moduli space for EPW sextics, constructed by O'Grady \cite{OG16}. We recall the following result:
    \begin{theorem}\cite[Proposition 6.1]{DK3}\label{thm: period map GM}
        The period map $\calP$ factors as 
        \[\calP\colon\calM^{GM}\twoheadrightarrow \calM^{EPW}\overset{\mathscr{P}}{\longrightarrow}\calD\] where $\mathscr{P}$ is the period map for double EPW sextics, and is an open embedding.

    \end{theorem}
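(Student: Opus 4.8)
The plan is to build the factorization out of the Lagrangian data attached to a Gushel-Mukai fourfold, and then to recognize the second factor as the period map of double EPW sextics, which I would show to be \'etale and injective via local and global Torelli. First I would recall the dictionary between GM fourfolds and Lagrangian data: to a GM fourfold $Z\subset\bP(\wedge^2V_5)$ one canonically attaches a six-dimensional space $V_6\supset V_5$ together with a Lagrangian subspace $A\subset\wedge^3V_6$ for the wedge pairing $\wedge^3V_6\times\wedge^3V_6\to\wedge^6V_6\cong\bC$, with $A$ containing no decomposable vectors; conversely, data $(V_6,V_5,A)$ satisfying suitable open conditions reconstruct a smooth GM fourfold. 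Since this assignment is equivariant for the relevant group actions, passing to isomorphism classes yields a morphism $\calM^{GM}\to\calM^{EPW}$, $[Z]\mapsto[W_A]$. It is surjective because for a Lagrangian $A$ without decomposable vectors lying in O'Grady's GIT-stable locus, a generic hyperplane $V_5\subset V_6$ produces admissible Lagrangian data and hence a GM fourfold over $[W_A]$; verifying that the inadmissible hyperplanes form a proper closed subset of $\bP(V_6^\vee)$ is part of this step.

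The crux is to show that the period point $\calP([Z])\in\calD$---which by definition records the polarized Hodge structure on the vanishing cohomology $H^4_{00}(Z,\bZ)\subset H^4(Z,\bZ)$---depends on the triple $(V_6,V_5,A)$ only through $A$. I would prove this by producing a geometric correspondence between $Z$ and the EPW sextic determined by $A$---for instance via the family of quadric surfaces (or conics) on $Z$ and the associated incidence variety, or via the discriminant of a quadric fibration built from the Lagrangian---and extracting from it a Hodge isometry, up to sign and a Tate twist, $H^4_{00}(Z,\bZ)\cong H^2(\widetilde{W}_A,\bZ)_0(-1)$, where $H^2(\widetilde{W}_A,\bZ)_0$ denotes the primitive part with respect to the canonical square-two polarization of the double EPW sextic $\widetilde{W}_A\to W_A$. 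Since $\widetilde{W}_A$ with its polarization depends only on $A$ up to $\PGL(V_6)$, the period descends to a morphism $\mathscr{P}\colon\calM^{EPW}\to\calD$ with $\calP=\mathscr{P}\circ(\text{the morphism above})$, and by construction $\mathscr{P}$ is the period map of double EPW sextics carrying their square-two polarization.

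Finally I would argue that $\mathscr{P}$ is an open embedding. The double EPW sextics form a $20$-dimensional, locally complete family of hyperk\"ahler fourfolds of K3$^{[2]}$ type with a polarization of square $2$, and $\calD$ is precisely the corresponding $20$-dimensional polarized period domain; since deformations of hyperk\"ahler manifolds are unobstructed, local Torelli shows $\mathscr{P}$ is \'etale onto an open subset of $\calD$. For injectivity, if $\mathscr{P}([W_A])=\mathscr{P}([W_{A'}])$ then $\widetilde{W}_A$ and $\widetilde{W}_{A'}$ have Hodge-isometric $H^2$ with matching polarization classes, so by the global Torelli theorem for hyperk\"ahler manifolds---together with the computation of the relevant monodromy group, which rules out a residual quotient---they are isomorphic as polarized varieties; O'Grady's description of isomorphisms of double EPW sextics then forces $A$ and $A'$ to be $\PGL(V_6)$-equivalent, i.e.\ $[W_A]=[W_{A'}]$ in $\calM^{EPW}$. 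An \'etale injective morphism of varieties is an open embedding, and combined with the two previous paragraphs this gives the statement.

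The main obstacle is the second paragraph: producing the Hodge-theoretic identification of the vanishing cohomology of $Z$ with the primitive second cohomology of $\widetilde{W}_A$ requires a genuinely geometric correspondence between the two varieties, together with careful bookkeeping of lattices, discriminant forms, and signs to verify that the induced map is an isometry of Hodge structures; the monodromy computation behind the injectivity in the last paragraph is the secondary technical point.
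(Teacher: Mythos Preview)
The paper does not supply its own proof of this theorem: it is stated with the citation \cite[Proposition 6.1]{DK3} and used as a black box in the argument of \Cref{thm: nonsyz}. There is therefore nothing in the present paper to compare your sketch against.

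That said, your outline is broadly in line with how the result is established in the literature (Debarre--Kuznetsov, building on O'Grady): the assignment $Z\mapsto A$ via Lagrangian data gives the first factor, the Hodge-theoretic identification $H^4_{00}(Z,\bZ)\cong H^2(\widetilde{W}_A,\bZ)_0(-1)$ shows the period depends only on $A$, and Verbitsky's global Torelli together with the monodromy description handles injectivity of $\mathscr{P}$. Your identification of the main obstacle---constructing the geometric correspondence that yields the Hodge isometry---is accurate; in Debarre--Kuznetsov this is done by an explicit analysis of the family of quadrics/conics on $Z$ and the associated double cover. For the purposes of the present paper none of this is needed: the authors only invoke the factorization and the injectivity of $\mathscr{P}$, both of which are quoted directly.
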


    GM fourfolds that are contained in the same fiber of the period map or in dual fibers are closely related:
    \begin{theorem}\label{thm: bir GM}
        Any two GM fourfolds $Z_1,Z_2\in \calP^{-1}([A])\cup \calP^{-1}([A^\perp])$ are birationally equivalent. Further, $Z_1$ and $Z_2$ are Fourier-Mukai partners.
    \end{theorem}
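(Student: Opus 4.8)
This statement assembles two standard facts about Gushel--Mukai fourfolds, and I would organize the proof around the period map. Recall that a GM fourfold $Z$ determines \emph{Lagrangian data} $(V_6,V_5,A)$, where $V_5\subset V_6$ is a hyperplane and $A\subset\wedge^3 V_6$ is a Lagrangian subspace; two GM fourfolds are \emph{period partners} when their Lagrangians agree up to the natural $\PGL(V_6)$-action, and one is a \emph{dual} of the other when its Lagrangian is, up to $\PGL(V_6)$, the annihilator of the other's. By \Cref{thm: period map GM} the period map $\calP$ factors through $\calM^{EPW}$ via an open embedding $\mathscr{P}$; since the composite $\calM^{GM}\twoheadrightarrow\calM^{EPW}$ sends a GM fourfold to the class of its Lagrangian, the hypothesis $Z_1,Z_2\in\calP^{-1}([A])\cup\calP^{-1}([A^\perp])$ says precisely that $Z_1$ and $Z_2$ are each a period partner or a dual of a fixed GM fourfold with Lagrangian $A$. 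Since birational equivalence and equivalence of (K3-type) categories are transitive, it then suffices to prove both conclusions in the two cases (i) two period partners, and (ii) a GM fourfold and one of its duals.

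For the birational equivalence I would appeal to the explicit constructions of Debarre--Iliev--Manivel and Debarre--Kuznetsov \cite{DK3}: GM fourfolds that are period partners, or that are dual to one another, are birationally equivalent. The idea is to realize each $Z_i$ birationally through an auxiliary variety depending only on the Lagrangian $A$ (equivalently $A^\perp$) --- for instance via the conic or quadric fibration attached to $Z_i$, or by linear projection from a suitable rational surface contained in $Z_i$ --- so that $Z_1$ and $Z_2$ become birational to a common model; in case (ii) one routes through an intermediate dual GM fourfold. This yields $Z_1\bir Z_2$.

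For the Fourier--Mukai statement I would cite Kuznetsov--Perry's work on derived categories of GM varieties: for a GM fourfold $Z$ the GM category $\calA_Z$ --- the K3-type admissible subcategory of $D^b(Z)$, which plays the role here of the Kuznetsov component --- depends, up to equivalence, only on $A$, and $\calA_Z\simeq\calA_{Z'}$ whenever $Z'$ is a dual of $Z$. Hence period partners and duals are Fourier--Mukai partners; composing equivalences along the chain in the first paragraph shows that $Z_1$ and $Z_2$ are Fourier--Mukai partners. (Here one should invoke the corrected form of the duality equivalence, whose original argument required a later amendment.)

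The genuine content lies in the birational equivalence of period partners and duals (second paragraph): the period-theoretic reduction is essentially bookkeeping and the category equivalence is used as a black box, whereas the birational maps are not formal and rest on the geometric classification of GM fourfolds --- ordinary versus special (Gushel) type, and the behavior over the EPW stratifications of $\bP(V_6)$ and $\bP(V_6^\vee)$. That is the step a self-contained argument would have to reproduce; everything else in the proof is organizational.
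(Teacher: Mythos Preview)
Your proposal is correct and follows essentially the same approach as the paper, which also treats the theorem as an assembly of two known results: the birational equivalence of period partners and duals (the paper cites \cite{DK1} rather than \cite{DK3}) and the Kuznetsov--Perry equivalence of GM categories (the paper cites \cite{KuzPerryCatCones}). Your version supplies more of the surrounding narrative and the reduction via transitivity, but the substance is identical.
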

    \begin{proof}
        The first statement is \cite[Corollary 4.16 and Theorem 4.20]{DK1}. The second statement is \cite[Theorem 1.6]{KuzPerryCatCones}.
    \end{proof}

    For some three-dimensional subspace $V_3\subset V_5$, a plane of the form $\bP(\wedge^2 V_3)$ is called a $\rho$\textit{-plane}.
A GM fourfold $Z$ containing a $\rho$-plane $\Pi$ is birational to a cubic fourfold $X$ containing a cubic scroll $T$: recalling the construction in \cite[Section 7.2]{DIM} and \cite[Proposition 5.6]{KuzPerry}, the complete linear system of quadrics containing the scroll $T$ induces a rational map $q\colon X\dashrightarrow\bP^8$ which is birational onto its image, a GM fourfold $Z_T\subset \Gr(2,V_5)$ containing a $\rho$-plane $\Pi$.  Projection from $\Pi$ induces a rational inverse $f\colon {Z_T}\dashrightarrow X$. As mentioned in \cite[Section 3.3]{BFMQ24}, the proper transform of $Y=\langle T\rangle\cap X$ in $\Bl_TX$ is a small resolution $Y^+$, and $q|_{Y^+}$ is a $\bP^1$-bundle over a plane. Note that for a general cubic fourfold $X\in \calC_{12}$, $X$ contains two families of cubic scrolls $[T], [T^\vee].$ Each choice of scroll gives a non-isomorphic GM-fourfold $Z_T$, $Z_{T^\vee}$; however, the double EPW sextics depend only on the choice of family $[T]$ or $[T^\vee].$ We thus denote them $\widetilde{W}_{T}, \widetilde{W}_{T}^\perp$ and $\widetilde{W}_{T^\vee}, \widetilde{W}_{T^\vee}^\perp$.

     We recall the following result:

    \begin{theorem}\cite[Theorem 1.1]{BFMQ24}\label{thm: EPW for one scroll}
        Let $X\in \calC_{12}$ be a smooth cubic fourfold containing a cubic scroll $T$ such that $\langle T\rangle\cap X$ has 6-nodes in general position. Let $F$ be the Fano variety of lines of $X$. Then $F$, $\widetilde{W}_{T}^\perp$ and $\widetilde{W}_{T^\vee}^\perp$ are all birational.  
    \end{theorem}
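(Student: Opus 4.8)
The plan is to reduce the statement to the Hodge-theoretic (global) Torelli theorem for hyperk\"ahler manifolds of K3$^{[2]}$ type, by linking $H^2(F,\bZ)$ to the primitive second cohomology of the dual double EPW sextics through the Gushel--Mukai fourfolds $Z_T$ and $Z_{T^\vee}$. First I would assemble, for the scroll $T$, a chain of Hodge isometries. From the discussion in \Cref{subsec:Fano}, the Abel--Jacobi map provides a Hodge isometry $H^4(X,\bZ)_{prim}\cong H^2(F,\bZ)_{prim}$, reversing the sign of the pairing. Then I would resolve the birational map $q\colon X\dashrightarrow Z_T$ by blowing up the scroll $T\subset X$: as recalled just before the theorem, the strict transform $Y^+$ of $\langle T\rangle\cap X$ becomes a small resolution carrying a $\bP^1$-bundle over a plane, and the exceptional divisor of $\Bl_T X$ is a projective bundle over $T$. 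Since $T$ and the plane are rational surfaces --- and the hypothesis that $\langle T\rangle\cap X$ has six nodes in general position guarantees that this is indeed the geometry --- the cohomology classes by which $H^4$ differs between $X$, $Z_T$, and a common resolution are all algebraic. Hence the minimal primitive sub-Hodge-structures of $H^4(X,\bZ)$ and of $H^4(Z_T,\bZ)$ whose complexifications contain the respective $(3,1)$-parts are Hodge isometric.

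The second ingredient is the theory of double EPW sextics. By \Cref{thm: period map GM} the period of $Z_T$ is recorded by its Lagrangian data $A$, and the K3-type Hodge structure carried by the vanishing cohomology of $Z_T$ is Hodge isometric to the primitive second cohomology of the double EPW sextic attached to that data. Here a careful analysis is needed, and it is where the dual sextic enters: because $Z_T$ contains a $\rho$-plane, the Lagrangian $A$ is non-generic and $\widetilde{W}_T$ is singular, whereas the six-nodes hypothesis forces the dual Lagrangian $A^\perp$ to be generic enough that $\widetilde{W}_T^\perp$ is a \emph{smooth} hyperk\"ahler fourfold of K3$^{[2]}$ type; matching periods --- and keeping track of the sign reversals in the comparisons above --- shows it is precisely $\widetilde{W}_T^\perp$ whose primitive $H^2$ is Hodge isometric to that of $F$. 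Concatenating the two chains, I obtain a Hodge isometry between the transcendental lattices of $F$ and of $\widetilde{W}_T^\perp$, and I would extend it to a Hodge isometry $\psi\colon H^2(F,\bZ)\xrightarrow{\sim}H^2(\widetilde{W}_T^\perp,\bZ)$ after checking that the N\'eron--Severi lattices --- pinned down on the two sides by the $\calC_{12}$-labelling of $X$ and by the $\rho$-plane of $Z_T$ --- are abstractly isometric and glue compatibly to the transcendental parts.

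To conclude, I would invoke Markman's Torelli theorem. Replacing $\psi$ by $-\psi$ if necessary, we may take $\psi$ to be a parallel transport operator; composing it further with a suitable element of $\Mon^2_{Hdg}(F)=W_{Exc}\rtimes W_{Bir}$ --- which acts transitively on the chambers of $\overline{\mathrm{Pos}(F)}\setminus\bigcup_{\rho\in\calW_{\pex}}\rho^\perp$ (Theorems~\ref{thm:torelli} and \ref{thm: Mov and Amp}) --- we may assume $\psi$ carries $\Mov(F)$ onto $\Mov(\widetilde{W}_T^\perp)$, so that $\psi$ is induced by a birational map $F\dashrightarrow\widetilde{W}_T^\perp$. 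Running the entire argument with the residual scroll $T^\vee$ in place of $T$ --- the Fano variety of lines $F$ of $X$ is the same for either scroll --- gives $F\bir\widetilde{W}_{T^\vee}^\perp$ as well, and hence $F$, $\widetilde{W}_T^\perp$ and $\widetilde{W}_{T^\vee}^\perp$ are all birational.

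The main obstacles are the second and third steps: carrying out the period and sign bookkeeping precisely enough to land on the \emph{dual} EPW sextic (and to confirm that $\widetilde{W}_T$ itself is singular), and then identifying the N\'eron--Severi lattices and verifying the movable-cone hypothesis of Markman's theorem for \emph{every} $X$ satisfying the nodal condition, not only the very general one where the Picard rank is $2$. A more geometric alternative would construct the birational maps directly: since $q$ is defined by the quadrics through $T$, it sends a general line on $X$ disjoint from $T$ to a conic on $Z_T$, realizing $F$ birationally as a component of the Hilbert scheme of conics on $Z_T$ --- a family related to the double EPW sextics of $Z_T$ --- but identifying the correct component would ultimately rest on the same period computation.
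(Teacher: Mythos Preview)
The present paper does not prove this theorem; it is quoted from \cite{BFMQ24} as a black box. From the way the results of \cite{BFMQ24} are invoked elsewhere here---see especially the proofs of \Cref{thm: nonsyz} and \Cref{proposition:TP1 relating cubics}, which cite \cite[Lemma~3.5, Propositions~3.6 and~3.7]{BFMQ24}---the original argument is explicitly geometric rather than Hodge-theoretic: a point of $\widetilde{W}_T^\perp$ parametrizes a singular quadric threefold $Q_w\subset\Gr(2,V_5)$, and the birational map $\widetilde{W}_T^\perp\dashrightarrow F$ sends $w$ to the line in $X$ residual to the conic cut out by $Q_w$ on $Z_T$ (pulled back along $q$). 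This explicit description is indispensable for the applications in Sections~\ref{sec:nonsyz} and~\ref{sec:C8C12}, where one must know precisely which Lagrangian planes are flopped and how the covering involution $\tau$ enters.

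Your Torelli approach is a reasonable alternative in outline, but the gap you identify in your last paragraph is genuine and not merely bookkeeping. Extending a Hodge isometry of transcendental lattices to all of $H^2$ requires the N\'eron--Severi lattices to be isometric \emph{and} the gluing data in the discriminant groups to match; for $X$ special beyond the $\calC_{12}$ labelling, $\NS(F)$ can have arbitrarily high rank, and there is no a priori mechanism forcing $\NS(F)\cong\NS(\widetilde{W}_T^\perp)$ compatibly. One could try to prove the very general case this way and then specialize, but birational maps between hyperk\"ahler varieties do not specialize naively in families (walls can appear). The direct geometric construction of \cite{BFMQ24} sidesteps all of this: it produces the birational map uniformly whenever the six-nodes hypothesis holds, and simultaneously yields the fine control over its indeterminacy locus that the rest of the paper needs.
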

The assumption on $\langle T\rangle\cap X$ holds in general and in many special families in $\calC_{12}$.

\section{Cubics containing a non-syzygetic pair of cubic scrolls}\label{sec:nonsyz}

Let $X$ be a very general cubic fourfold containing two cubic scrolls $T_1, T_2\subset X$, such that $[T_1]\cdot [T_2]=1.$ Such a pair of scrolls is called \textit{non-syzygetic} and, together with the square of the hyperplane class, they span a rank $3$ sublattice of $A(X)$. Let $\calC_{nonsyz}\subset \calC_{12}$ be the locus parametrizing such cubic fourfolds. We prove:
\begin{theorem}\label{thm: nonsyz}
    Let $X\in \calC_{nonsyz}$ be a very general cubic fourfold, and $F$ its Fano variety of lines. Then there exists a cubic fourfold $X'\in\calC_{nonsyz}$, i.e. also containing a non-syzgetic pair of cubic scrolls, with Fano variety of lines $F'$ such that:
    \begin{enumerate}
        \item[({\bf{BE}})] $X$ and $X'$ are birational but not isomorphic,
        \item[({\bf{FM}})] $\calA_{X'}\simeq \calA_X,$ and 
        \item[({\bf{BF}})] $F$ and $F'$ are birational but not isomorphic.
    \end{enumerate}
\end{theorem}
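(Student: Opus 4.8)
The plan is to construct $X'$ by applying \Cref{prop: existence of cubic} to a suitable isometry of $\NS(F)$, and to extract the three claimed properties from the structure of $\Mov(F)$ together with the GM/EPW correspondence of \Cref{thm: EPW for one scroll}. First I would pin down the lattice $A(X)$ for a very general $X \in \calC_{nonsyz}$: it has rank $3$, generated by $\eta_X$ and the classes $[T_1], [T_2]$ of the non-syzygetic scrolls, with Gram matrix determined by $\eta_X^2 = 3$, $\eta_X \cdot [T_i] = 3$ (each scroll has degree $3$), $[T_i]^2 = 3$ (self-intersection of a cubic scroll on a cubic fourfold), and $[T_1]\cdot[T_2] = 1$ by the non-syzygetic hypothesis. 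Transporting this to $\NS(F)$ via the Abel–Jacobi isometry $\alpha$ (which negates the form on the primitive part and sends $\eta_X$ to the Plücker class $g$ with $g^2 = 6$, $\mathrm{div}(g)=2$), I get an explicit rank-$3$ lattice in which I must search for a class $h$ with $h^2 = 6$, $\mathrm{div}(h) = 2$, lying in $\Mov(F)$, and in the $\Orth(\NS(F))$-orbit of $g$ but not the $\Bir(F)$-orbit. The natural candidate for $h$ is the Plücker-type class coming from the \emph{other} family of scrolls, or more precisely a class in $\Mov(F)$ obtained from $g$ by a reflection in a wall divisor $\rho \in \calW_{\pex}$, i.e. a $(-2)$-class; the two-net structure of cubic scrolls on $\calC_{12}$ members (recalled in \S\ref{subsec:special}, with $X \cap Q \cap \langle T\rangle = T \cup T^\vee$) strongly suggests where these reflections live.

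Next I would verify the hypotheses of \Cref{prop: existence of cubic}. Hypothesis (2) — existence of $\phi \in \Orth(\NS(F))$ with $\phi(g) = h$ — should follow from an explicit matrix computation (the paper mentions Magma computations of orthogonal groups, so this is exactly the kind of step they automate). Hypothesis (1) — that $h \in \Mov(F)$ — is where \Cref{thm: Mov and Amp}(1) and \Cref{thm:torelli} enter: I would identify the walls $\rho^\perp$ for $\rho \in \calW_{\pex} \cap \NS(F)$ (finitely many up to the $W_{Exc}$-action, all with $\rho^2 = -2$), determine the chamber decomposition of $\overline{\mathrm{Pos}(F)} \cap \NS(F)_\bR$, and check that $h$ lies in the movable cone — equivalently, that $h$ pairs nonnegatively with the relevant $\calW_{\pex}$-divisors on the movable side, or that $h$ is $W_{Exc}$-equivalent to a class in the fundamental (nef) chamber. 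Then \Cref{prop: existence of cubic} produces a cubic fourfold $X'$ with Fano variety $F'$ and a birational map $\pi \colon F' \dashrightarrow F$ with $\pi^*h = g_{F'}$, and the "moreover" clause gives the non-isomorphism ({\bf{BF}}) provided I also check $h$ is \emph{not} in the $\Bir(F)$-orbit of $g$ — i.e. that the isometry $\phi$ is not realized by $W_{Bir}$; this is the crux of showing $F \not\cong F'$ and $X \not\cong X'$, and I expect to handle it by showing $\phi$ moves the nef chamber to a different chamber of $\Mov(F)$ (using that $\Mon^2_{Hdg}$ acts simply transitively on the relevant chambers, \cite[Lemma 5.11(5)]{markman}), hence $g$ and $h$ cannot both be ample on the same model.

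For ({\bf{BF}}), birationality of $F$ and $F'$ is immediate from the map $\pi$ produced above. For ({\bf{BE}}) and ({\bf{FM}}), the idea is to route through Gushel–Mukai fourfolds: by \Cref{thm: EPW for one scroll}, $F$ is birational to $\widetilde W_T^\perp$ and to $\widetilde W_{T^\vee}^\perp$, and similarly $F'$ is birational to the dual EPW sextic attached to a scroll family on $X'$. I would argue that $X$ and $X'$, both lying in $\calC_{nonsyz}$, give GM fourfolds $Z_T$ (resp. $Z_{T'}$) whose associated Lagrangian data $A$, $A^\perp$ coincide — i.e. $Z_T$ and $Z_{T'}$ lie in the same fiber or dual fibers of the GM period map — so that \Cref{thm: bir GM} yields both that $Z_T \bir Z_{T'}$ (hence $X \bir X'$, since each cubic is birational to its associated GM fourfold via the quadric/projection construction of \S\ref{subsec: GM4fold}) and that $Z_T$, $Z_{T'}$ are Fourier–Mukai partners (which should transfer to $\calA_X \simeq \calA_{X'}$ via the semiorthogonal relationship between $\calA_X$ and the GM Kuznetsov component). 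Finally, the non-isomorphism $X \not\cong X'$ in ({\bf{BE}}) follows from the Global Torelli theorem for cubic fourfolds \cite{voisintorelli} together with the already-established $F \not\cong F'$.

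**Main obstacle.** I expect the delicate point to be proving that $X \not\cong X'$ and $F \not\cong F'$, i.e. that the isometry $\phi$ swapping $g$ and $h$ genuinely fails to be induced by a birational automorphism of $F$. This requires a careful analysis of the chamber structure of $\Mov(F)$ inside the rank-$3$ lattice $\NS(F)$ and of how $W_{Bir} \subset \Mon^2_{Hdg}(F)$ acts on it — in particular ruling out that $h$ and $g$ are $W_{Bir}$-conjugate. A secondary subtlety is making the GM/EPW bookkeeping precise: identifying exactly which EPW data $X'$ gives rise to and confirming it matches that of $X$ (including keeping track of the $[T]$ versus $[T^\vee]$ ambiguity noted before \Cref{thm: EPW for one scroll}), so that \Cref{thm: bir GM} genuinely applies to link $X$ and $X'$.
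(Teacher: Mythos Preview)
Your overall architecture is right --- lattice analysis for ({\bf BF}) and the non-isomorphism, then the GM/EPW bridge for ({\bf BE}) and ({\bf FM}) --- but you have inverted the weights. What you flag as the ``main obstacle'' (showing $F\not\cong F'$ and $X\not\cong X'$ via the chamber structure of $\Mov(F)$) is precisely the content of \cite[Corollary~1.4, Proposition~5.15]{BFMQ24}, recorded here as \Cref{cor: bir model for nonsyz}; the paper simply cites it rather than redoing the rank-$3$ lattice computation. (Incidentally, a cubic scroll on a cubic fourfold has $[T]^2=7$, not $3$; cf.\ the Gram matrix $M_a$ in \S\ref{subsec:TP1comp}.) So your plan for ({\bf BF}) is correct but redundant with prior work.

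Conversely, what you call a ``secondary subtlety'' --- matching the EPW Lagrangian data of $X$ and $X'$ --- is the entire new content of the paper's proof, and your proposal does not supply a mechanism for it. Merely knowing that $X,X'\in\calC_{nonsyz}$ and that $F\bir F'$ does not by itself force $Z_T$ and $Z_{T'}$ into the same (or dual) fiber of $\calP$. The paper's device is this: the birational map of \Cref{cor: bir model for nonsyz} is known from \cite[Theorem~5.2]{BFMQ24} to factor as two Mukai flops
\[
F \dashrightarrow \widetilde W_{T_1}^{\perp} \dashrightarrow F',
\]
so the dual double EPW sextic $\widetilde W_{T_1}^{\perp}$ is literally a common hyperk\"ahler birational model of $F$ and $F'$. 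Running the same flop construction starting from $X'$ produces $(\widetilde W'_{T_1'})^{\perp}$ as the same intermediate model (after the $T_1'\leftrightarrow (T_1')^\vee$ relabeling you correctly anticipated). Since the EPW period map $\mathscr P$ of \Cref{thm: period map GM} is an open embedding, $\widetilde W_{T_1}^{\perp}\cong(\widetilde W'_{T_1'})^{\perp}$ forces $A^\perp=(A')^\perp$, hence $A=A'$, and then \Cref{thm: bir GM} plus \cite[Theorem~5.8]{KuzPerry} give ({\bf BE}) and ({\bf FM}). Without this explicit factorization through a shared EPW model, your ``I would argue that the Lagrangian data coincide'' is a claim, not an argument.
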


The birational geometry of the Fano variety of lines $F$ for such an $X$ was studied in \cite[Theorem 1.3]{BFMQ24}. In particular, we have:
\begin{corollary}\cite[Corollary 1.4, Proposition 5.15]{BFMQ24}\label{cor: bir model for nonsyz}
    There exists a cubic fourfold $X'\in \calC_{nonsyz}$, not isomorphic to $X$, such that there exists a birational morphism $f:F\bir F(X').$ 
\end{corollary}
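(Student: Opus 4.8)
The plan is to deduce the corollary from \Cref{prop: existence of cubic} by producing a second polarization on $F$ together with an isometry of $\NS(F)$ carrying the Pl\"ucker class to it. The first step is to make the N\'eron--Severi lattice explicit. For $X\in\calC_{nonsyz}$ very general, $A(X)$ is the rank-three lattice spanned by $\eta_X$, $[T_1]$, $[T_2]$ (with $[T_1]\cdot[T_2]=1$), and the Abel--Jacobi isometry of \Cref{subsec:Fano} identifies $\NS(F)$ with a rank-three lattice of signature $(1,2)$ in which the Pl\"ucker class $g$ is a primitive vector with $g^2=6$ and $\mathrm{div}(g)=2$. I would record its Gram matrix explicitly, thereby fixing coordinates on $\NS(F)$ in which $g$ is a known vector.

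Next I would determine the wall-and-chamber decomposition of $\Mov(F)$ using \Cref{thm: Mov and Amp}. Concretely, I would enumerate the $(-2)$-classes $\rho\in\calW_{\pex}$ and the divisibility-$2$ classes $\rho\in\calW_{\flop}$ with $\rho^2=-10$ in this rank-three lattice; their orthogonal hyperplanes cut $\overline{\mathrm{Pos}(F)}$ into the movable cone and subdivide it into the nef chambers of the birational models of $F$. In particular I would locate the chamber $C_g=\Nef(F)$ containing $g$ and an adjacent flopping chamber $C_h$ across a $\calW_{\flop}$-wall, and inside $C_h$ exhibit a primitive class $h\in\Mov(F)$ with $h^2=6$ and $\mathrm{div}(h)=2$. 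This is hypothesis (1) of \Cref{prop: existence of cubic}. Hypothesis (2), namely an isometry $\phi\in\Orth(\NS(F))$ with $\phi(g)=h$, then follows from lattice theory: $g$ and $h$ are primitive vectors of equal square and divisibility in an indefinite rank-three lattice, so such a $\phi$ exists (by an Eichler-type argument or, as the paper does, by direct computation of $\Orth(\NS(F))$ in Magma).

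Applying \Cref{prop: existence of cubic} then yields a smooth cubic fourfold $X'$ with Fano variety $F'=F(X')$ and a birational map $\pi\colon F'\dashrightarrow F$ taking $h$ to the Pl\"ucker class; setting $f=\pi^{-1}\colon F\dashrightarrow F'$ gives the asserted birational equivalence. That $X'\in\calC_{nonsyz}$ follows because $\NS(F')\cong\NS(F)$ carries the same algebraic Hodge data, so via the inverse Abel--Jacobi map $A(X')$ again contains the non-syzygetic configuration, which by the integral Hodge conjecture for cubic fourfolds is realized by a pair of cubic scrolls. It remains to check $X'\not\cong X$, and here the ``moreover'' clause of \Cref{prop: existence of cubic} reduces the task to showing that no $\psi\in\Bir(F)$ satisfies $\psi^*(h)=g$.

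This final step is where I expect the main obstacle to lie, since it requires separating the action of $\Bir(F)$ on $\NS(F)$ from that of the full orthogonal group. By Markman's Torelli theorem (\Cref{thm:torelli}) together with the decomposition $\Mon^2_{Hdg}(F)=W_{Exc}\rtimes W_{Bir}$, the isometries induced by birational automorphisms are exactly the Hodge monodromy operators preserving $\Mov(F)$, and these permute the nef chambers $C_g,C_h,\dots$. Since $g$ and $h$ lie in the interiors of $C_g$ and $C_h$, any $\psi$ with $\psi^*(h)=g$ would force $\psi^*(C_h)=C_g$. I would therefore compute the image of $W_{Bir}$ in $\Orth(\NS(F))$ and verify that $C_h$ and $C_g$ lie in distinct $W_{Bir}$-orbits (or, if they share an orbit, that no element of the chamber stabilizer matches $h$ to $g$), so that the isometry $\phi$ constructed above is genuinely not realized by an element of $\Bir(F)$. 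Establishing this separation concretely, rather than merely the abstract existence of $\phi$, is the delicate point: it demands explicit knowledge of the monodromy group, and it is exactly what guarantees $X\not\cong X'$.
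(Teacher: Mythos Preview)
The paper does not supply its own proof here; the statement is quoted from \cite{BFMQ24} (Corollary~1.4 and Proposition~5.15), so there is no in-paper argument to compare against directly. Your lattice-theoretic outline is nonetheless sound and matches the template the present paper uses in \Cref{sec:20} and \Cref{sec:C8C12}: compute $\NS(F)$, enumerate wall divisors, find a second square-$6$ divisibility-$2$ class $h\in\Mov(F)$, and invoke \Cref{prop: existence of cubic}. From what the proof of \Cref{thm: nonsyz} reveals about \cite{BFMQ24}, the original argument combines this movable-cone analysis with an explicit geometric identification of the adjacent chambers: the map $F\dashrightarrow F'$ is realized concretely as a composition of two Mukai flops in disjoint Lagrangian planes, factoring through the double EPW sextic $\widetilde{W}_{T_1}^\perp$ of \Cref{thm: EPW for one scroll}. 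Your purely lattice-theoretic route would reach the same endpoint without this geometric input, at the cost of losing the explicit description of the flop that the paper later exploits to prove ({\bf BE}) and ({\bf FM}).

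Two minor caveats. The ``Eichler-type argument'' is unavailable in rank three (Eichler's criterion requires two hyperbolic-plane summands), so $\phi$ really must be produced by hand, as your parenthetical already concedes. And the step ``$\NS(F')\cong\NS(F)$, hence $X'\in\calC_{nonsyz}$'' is not formal: the integral Hodge conjecture guarantees the relevant classes in $A(X')$ are algebraic, but not that they are represented specifically by cubic \emph{scrolls}; establishing this is exactly the content of \cite[Proposition~5.15]{BFMQ24}. Your identification of the $\Bir(F)$ computation as the delicate point is correct, and the tool the paper uses elsewhere for this---characterizing monodromy operators via their action on a distinguished subgroup of $D(\NS(F))$, as in \Cref{lemma:20discgroupaction} and \Cref{lemma:C8C12discgroup}---is what you would want here as well.
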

We will prove that the cubic fourfolds $X$ and $X'$ in \Cref{cor: bir model for nonsyz} are birational and Fourier-Mukai partners.

\begin{proof}[Proof of \Cref{thm: nonsyz}]
    Let $T_1, T_2\subset X$ be a pair of non-syzygetic scrolls contained in $X$. By \Cref{cor: bir model for nonsyz}, $X'$ contains a pair of non-syzygetic cubic scrolls $T_1', T_2'\subset X'.$ From the proof of \cite[Theorem 5.2]{BFMQ24}, the birational map $F\dashrightarrow F(X')$ can be factored as a the composition of two Mukai flops in disjoint Lagrangian planes:
    $$F\dashrightarrow \widetilde{W}_{T_1}^\perp \dashrightarrow F',$$ where $\widetilde{W}_{T_1}^\perp$ is the double EPW sextic described in \Cref{thm: EPW for one scroll} associated to the GM fourfold $Z_{T_1}$. 

    One can construct an inverse by applying the same construction starting at $F'$:
    $$F'\dashrightarrow (\widetilde{W}_{T_1'}')^{\perp}\dashrightarrow F,$$ where $(\widetilde{W}_{T_1'}')^{\perp}$ 
    is associated to a GM fourfold $Z'_{T_1'}$ birational to $X'$. In particular, after possibly relabelling $T_1'$ and $T_1'^\vee$, we can choose $T_1'\subset X'$ such that $\widetilde{W}_{T_1}^\perp\cong (\widetilde{W}_{T_1'}')^\perp$. We prove that $Z'_{T_1'}$ and $Z_{T_1}$ are birational by showing they are contained in the same fiber of the period map $$\calP:\calM^{GM}\rightarrow \calM^{EPW}\rightarrow \calD$$ of \Cref{thm: period map GM}.
    
Let $\calP(Z_{T_1})=A$ and $\calP(Z'_{T_1'})=A'.$ Then, by construction, $\widetilde{W}_{T_1}^\perp = \widetilde{(W)}_{A^\perp}$ and $(\widetilde{W}_{T_1'}')^\perp=\widetilde{W}_{(A')^\perp}$ where we are using the notation of \Cref{subsec: GM4fold}. It follows that $A^\perp=(A')^\perp$, and thus $A=A'.$ By \Cref{thm: bir GM}, we see that $Z_{T_1}, Z'_{T_1'}$ are birational, thus $X\bir X'.$ By combining \cite[Theorem 5.8]{KuzPerry} and \Cref{thm: bir GM}, we see 
\[
\calA_{X}\simeq\calA_{Z_{T_1}}\simeq\calA_{Z_{T_1'}}\simeq\calA_{X'},
\]
where $\calA_{Z_{T_1}}$ is the subcategory of $D^b(Z_{T_1})$ defined in \cite[Proposition 2.3]{KuzPerry}.
\end{proof}

\section{Cubics of discriminant $20$}\label{sec:20}

Fan and Lai proved in \cite{FL24} that a very general cubic fourfold $X\in \calC_{20}$ contains a Veronese surface $V$ and is birational but not isomorphic to another cubic fourfold $X'\in \calC_{20}$ via a Cremona transformation of $\bP^5$ with center $V$. Moreover, $X$ and $X'$ are Fourier-Mukai partners. We prove that the corresponding Fano varieties of lines $F$ and $F'$ are isomorphic. Combining this with the results of \cite{FL24}, we have the following:

\begin{theorem}\label{theorem:20}
Let $X\in \calC_{20}$ be a very general cubic fourfold containing a Veronese surface. Then there exists a cubic fourfold $X'\in \calC_{20}$ with Fano variety of lines $F'$ such that:
    \begin{enumerate}
        \item[({\bf{BE}})] $X$ and $X'$ are birational but not isomorphic,
        \item[({\bf{FM}})] $\calA_X\simeq\calA_{X'}$, and
        \item[({\bf{BF}})] $F\cong F'$.
    \end{enumerate}
\end{theorem}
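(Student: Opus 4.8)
The plan is to combine an explicit description of $\NS(F)$ with the structure of the movable cone and the Torelli theorem for hyperk\"ahler manifolds. First I would compute $\NS(F)=\Pic(F)$. For a very general $X\in\calC_{20}$ the lattice $A(X)$ has rank $2$, and in the basis given by $\eta_X$ and the class of a Veronese surface $V\subset X$ it is isometric to $\left(\begin{smallmatrix}3&1\\1&7\end{smallmatrix}\right)$, with primitive part $A(X)_{prim}=\langle\eta_X\rangle^\perp$ generated by a class of square $60$. Using the Abel--Jacobi isometry $\alpha\colon H^4(X,\bZ)_{prim}\xrightarrow{\sim}H^2(F,\bZ)_{prim}$, the fact that the Pl\"ucker class $g$ has $g^2=6$ and $\mathrm{div}(g)=2$, and a short analysis of the discriminant forms (which forces a $\tfrac13$-gluing), I would identify $\NS(F)$ with the rank-$2$ lattice $\left(\begin{smallmatrix}6&2\\2&-6\end{smallmatrix}\right)$ of discriminant $-40$, with $g$ the first basis vector. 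A congruence argument modulo $5$ shows $\NS(F)$ contains no class of square $-2$, so by \Cref{thm: Mov and Amp}(1) we have $\Mov(F)=\overline{\mathrm{Pos}(F)}$, and the group $\Mon^2_{Hdg}(F)=W_{Bir}$ acts transitively on the chambers that the walls $\rho^\perp$, $\rho\in\calW_{\flop}\cap\NS(F)$, cut out inside $\overline{\mathrm{Pos}(F)}$; this set of walls is non-empty (e.g. the class $g+2u$ has square $-10$ and divisibility $2$), so $\Amp(F)$ is a proper subcone.

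Next I would produce a Hodge isometry relating the two Fano varieties. By \cite{FL24}, $X'$ is birational to $X$ via a Cremona transformation of $\bP^5$ centered at $V$, and $\calA_{X'}\simeq\calA_X$; from the resulting Fourier--Mukai equivalence (equivalently, from the isometry of primitive middle cohomologies induced by the Cremona transformation, extended to all of $H^2$ using the gluing data above) one obtains a parallel-transport Hodge isometry $\psi\colon H^2(F',\bZ)\xrightarrow{\sim}H^2(F,\bZ)$. Since $X,X'\in\calC_{20}$, the induced map $\NS(F')\xrightarrow{\sim}\NS(F)$ sends the Pl\"ucker class $g'$ to a class $h\coloneqq\psi(g')\in\NS(F)$ with $h^2=6$ and $\mathrm{div}(h)=2$; because $\psi$ preserves the positive cone and the set $\calW_{\flop}$ of $(-10)$-classes of divisibility $2$, the image $\psi(\Amp(F'))$ is one of the chambers of $\overline{\mathrm{Pos}(F)}$ described above.

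Finally, by transitivity of the $W_{Bir}(F)$-action on these chambers there is $w\in W_{Bir}(F)$ (the pullback of a birational automorphism of $F$, hence itself a parallel-transport Hodge isometry) with $w\bigl(\psi(\Amp(F'))\bigr)=\Amp(F)$. Then $w\circ\psi\colon H^2(F',\bZ)\to H^2(F,\bZ)$ is a parallel-transport Hodge isometry carrying $\Amp(F')$ onto $\Amp(F)$, so by the Torelli theorem for hyperk\"ahler manifolds it is induced by an isomorphism $F'\xrightarrow{\sim}F$; hence $F\cong F'$. (Equivalently one can feed $h'\coloneqq(w\circ\psi)(g')\in\Amp(F)\subset\Mov(F)$ into \Cref{prop: existence of cubic} with $\phi=(w\circ\psi)|_{\NS}$: the resulting cubic fourfold has Fano variety the ample model of $h'$, namely $F$ itself, and has the same period point as $X'$, so it is $X'$ by global Torelli for cubic fourfolds. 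This is consistent with $X\not\cong X'$ of \cite{FL24}: no automorphism of $F$ carries $h'$ back to $g$, as such a map would be a polarized Hodge isometry and would force $X\cong X'$.)

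The step I expect to be the crux is the construction of the \emph{parallel-transport} Hodge isometry $\psi$: one must verify that the isometry coming from the Cremona transformation (or from the equivalence $\calA_{X'}\simeq\calA_X$) is orientation-preserving and lies in the parallel-transport coset of $\Mon^2(F)$, not merely that it is an abstract Hodge isometry of $H^2$, and this is where the structural input from \cite{FL24} and from results on the monodromy realized by Fourier--Mukai equivalences must be used carefully. An alternative that sidesteps the cone analysis is to realize both $F$ and $F'$ as moduli spaces of objects in $\calA_X\simeq\calA_{X'}$ for a common Mukai vector and stability condition, and read off $F\cong F'$ directly; the difficulty then migrates to checking that the equivalence of \cite{FL24} can be chosen to match the classes and stability conditions that cut out the Fano varieties.
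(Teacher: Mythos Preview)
Your lattice analysis is correct and closely parallels the paper's \S\ref{subsec:C20lattice}: the Gram matrix $\left(\begin{smallmatrix}6&2\\2&-6\end{smallmatrix}\right)$, the absence of $(-2)$-classes, $\Mov(F)=\overline{\mathrm{Pos}(F)}$, and the transitivity of $W_{Bir}$ on chambers all appear there. But the step you yourself flag as the crux is a genuine gap. Neither the equivalence $\calA_X\simeq\calA_{X'}$ nor the Cremona transformation hands you a parallel-transport Hodge isometry $\psi\colon H^2(F',\bZ)\to H^2(F,\bZ)$. The Fourier--Mukai equivalence gives an isometry of Mukai lattices $\widetilde{H}(\calA_X)\cong\widetilde{H}(\calA_{X'})$, and passing from this to $H^2$ of the Fano varieties requires identifying both as moduli spaces for Mukai vectors and stability conditions that are matched by the equivalence---exactly the difficulty you defer to your ``alternative.'' The Cremona, on the other hand, does not preserve the hyperplane class (indeed $H'=2H-E$ on the blow-up), so it does not induce an isometry of primitive middle cohomologies that one can simply glue with $g\mapsto g'$ and declare to be monodromy. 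Without $\psi$, your Torelli argument does not start; and once one has a birational map $F\dashrightarrow F'$, your chamber argument is no longer needed, since the transitivity already implies $F$ has a unique birational hyperk\"ahler model.

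The paper avoids this entirely by \emph{constructing the isomorphism $F\cong F'$ directly from the geometry of the Cremona} (\Cref{proposition:20isomorphicfanos}). For a line $\ell\subset X$, its total transform under $f\colon X\dashrightarrow X'$ is always a conic $C_\ell\subset X'$; one sets $\pi([\ell])$ to be the line residual to $C_\ell$ in the plane section $\langle C_\ell\rangle\cap X'$. Since $f$ can be taken to be an involution, the analogously defined $\pi'\colon F'\to F$ satisfies $\pi'\circ\pi=\id$ on a dense open, hence $\pi$ is an isomorphism. This concrete ``line $\to$ conic $\to$ residual line'' construction is the key idea missing from your proposal. The paper's lattice section then plays a complementary role: it shows (\Cref{proposition:20numberofcubics}) that $F$ has a unique birational model and that $\Nef(F)$ contains exactly two square-$6$ divisibility-$2$ classes, so $X'$ is the \emph{only} other cubic with Fano variety birational to $F$, and one can identify $\pi^*g'=3g-2\lambda$ explicitly.
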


Further, we show that $X'$ is the only cubic fourfold whose Fano variety of lines is birational to $F$ (\Cref{proposition:20numberofcubics}).

Our outline is as follows: first, in \Cref{subsec:C20lattice} we use lattice-theoretic information about $\NS(F)$ to count the birational models of $F$ up to isomorphism and to count the classes of degree $6$ and divisibility $2$ in $\NS(F)$. Then, in \Cref{subsec:geometry} we use the Cremona transform $X\dashrightarrow X'$ to construct an explicit isomorphism $\pi:F\to F'$.

\subsection{Lattice theory and the movable cone}\label{subsec:C20lattice}
Let $X$ be a very general member of $\calC_{20}$, let $V\subset X$ be the Veronese surface, and let $F$ the Fano variety of lines on $X$. Set $g=\alpha(\eta_X)$, $v=\alpha(V)$, and $\lambda=v-g$, where $\alpha$ denotes the Abel-Jacobi map introduced in Section~\ref{subsec:Fano}. Then under the BBF form, $\NS(F)$ is given by the lattice 
\begin{center}
\begin{tabular}{r|rr}
& $g$ & $\lambda$  \\ \hline
$g$ & $6$   & $2$   \\
$\lambda$ & $2$   & $-6$  \\
\end{tabular}
\end{center}
and $\Orth(\NS(F))$ is generated by $\pm1$ and the matrices 
\[
A=\begin{pmatrix*}[r] 3 & -2 \\ 4 & -3 \end{pmatrix*}\hspace{1cm} B=\begin{pmatrix*}[r] 3 & 4 \\ -2 & -3 \end{pmatrix*}
\]
(which was computed using the algorithm outlined in \cite{Mertens} and implemented using the Magma package \texttt{AutHyp.m}). Both $A$ and $B$ preserve $\mathrm{Pos}(F)$. The discriminant group is
\[
D(\NS(F))=\left\langle\frac{g+2\lambda}{10}\right\rangle\oplus\left\langle\frac{g+\lambda}{4}\right\rangle.
\]

\begin{lemma}\label{lemma:20discgroupaction}
    An isometry $\varphi\in \Orth(\NS(F))$ is induced by a birational automorphism of $F$ if and only if $\varphi$ preserves the positive cone and acts by $\pm\id$ on the subgroup $$H=\bZ/20\bZ\cong\left\langle \frac{g+2\lambda}{5} \right\rangle \oplus \left\langle \frac{g+\lambda}4 \right\rangle.$$
\end{lemma}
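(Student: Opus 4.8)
The plan is to apply the Torelli-type description of $\Bir(F)$ from \Cref{thm:torelli}: an isometry $\varphi \in \Orth(\NS(F))$ is induced by a birational automorphism if and only if $\varphi$ extends to a Hodge monodromy operator on $H^2(F,\bZ)$ and this extension preserves $\Mov(F)$. Since $X$ is very general in $\calC_{20}$, the transcendental lattice $T(F) = \NS(F)^\perp$ carries a Hodge structure with no proper sub-Hodge-structures, so the only Hodge isometries of $T(F)$ are $\pm\id$. Thus the relevant question becomes: which $\varphi \in \Orth(\NS(F))$ glue with $\pm\id_{T(F)}$ to a genuine isometry of the unimodular lattice $H^2(F,\bZ) \cong \cubic_{K3^{[2]}}$ lying in $\Mon^2(F)$, and which of those preserve $\Mov(F)$.

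First I would handle the gluing condition. By Nikulin's theory, an isometry $\varphi$ of $\NS(F)$ and $\pm\id$ of $T(F)$ glue to an isometry of $H^2(F,\bZ)$ precisely when they induce the same automorphism of the discriminant group $D(\NS(F)) \cong D(T(F))$ (up to the sign). Since $-\id$ acts trivially on any group of exponent $2$, the gluing with $+\id_{T(F)}$ forces $\bar\varphi = \id$ on $D(\NS(F))$, and gluing with $-\id_{T(F)}$ forces $\bar\varphi = -\id$; on the $2$-torsion part $\langle (g+\lambda)/4\rangle \cong \bZ/4\bZ$ these two conditions are genuinely different, while on $\langle (g+2\lambda)/10\rangle$ the prime-to-$2$ part $\bZ/5\bZ$ distinguishes $\pm\id$ as well, but the $\bZ/2\bZ$ part does not. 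Collecting this, the subgroup where the constraint is nontrivial is exactly $H = \bZ/20\bZ$ as written, and the gluing condition says precisely that $\bar\varphi$ acts by $\pm\id$ on $H$. I would also need to check that any isometry obtained this way actually lies in $\Mon^2(F)$ — for $K3^{[2]}$-type this is Markman's description of the monodromy group as the group of orientation-preserving isometries acting by $\pm\id$ on the discriminant $\bZ/2\bZ$ of $\cubic_{K3^{[2]}}$, which is automatic here since $\varphi$ preserves $\Mov(F) \subset \overline{\mathrm{Pos}(F)}$, hence the positive cone, hence is orientation-preserving after combining with $\pm\id_{T(F)}$.

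Next I would address the movable cone condition. By \Cref{thm: Mov and Amp}(1), $\Mov(F)^\circ$ is cut out inside $\overline{\mathrm{Pos}(F)}$ by the hyperplanes $\rho^\perp$ for $\rho \in \calW_{\pex}$, i.e. $\rho^2 = -2$ in $\NS(F)$. Any Hodge monodromy operator permutes these walls and hence permutes the chambers; by \cite[Lemma 5.11(5)]{markman} and \Cref{thm:torelli}, such an operator is induced by a birational automorphism iff it fixes the distinguished chamber $\Mov(F)$. So I need: among the isometries $\varphi$ satisfying the discriminant condition and preserving $\mathrm{Pos}(F)$, all of them preserve $\Mov(F)$. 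This should follow by checking whether the lattice $\NS(F)$ (of discriminant $-40$) represents $-2$: the equation $6x^2 + 4xy - 6y^2 = -2$, i.e. $3x^2 + 2xy - 3y^2 = -1$, needs to be examined modulo small numbers — reducing mod $3$ gives $2xy \equiv -1 \equiv 2 \pmod 3$, so $xy \equiv 1$, which is solvable, so a closer analysis (or the Magma computation already invoked in the paper for $\Orth(\NS(F))$) is needed; if there are no $(-2)$-classes then $\Mov(F) = \overline{\mathrm{Pos}(F)}$ and the movable-cone condition is vacuous, reducing the lemma entirely to the discriminant computation. If $(-2)$-classes do exist, I would instead verify directly that the generators of the relevant subgroup of $\Orth(\NS(F))$ — explicitly among $\pm\id, A, B$ — which act by $\pm\id$ on $H$ are precisely those permuting the $(-2)$-walls trivially, a finite check using the explicit matrices $A$ and $B$.

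The main obstacle I expect is the movable-cone step: translating "acts by $\pm\id$ on $H$" into "preserves $\Mov(F)$" requires knowing the wall structure of $\overline{\mathrm{Pos}(F)}$ precisely, i.e. enumerating the $\calW_{\pex}$-classes (solutions of $3x^2+2xy-3y^2 = -1$) and tracking how $A$, $B$, and $-\id$ act on them. The gluing/discriminant bookkeeping, by contrast, is routine Nikulin theory once the discriminant form of $\NS(F)$ is written down, which the paper has already done. A clean alternative for the obstacle is to sidestep wall enumeration: show that every $\varphi \in \Orth(\NS(F))$ that preserves $\mathrm{Pos}(F)$ automatically either preserves or is on the boundary list, by noting $\Orth(\NS(F))/\{\pm\id\}$ is generated by $A$ (or $B$), computing its action on $D(\NS(F))$ once, and matching the order of $\bar A$ against the index of the subgroup of monodromy-induced isometries — a single Magma-assisted or by-hand order computation.
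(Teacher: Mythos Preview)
Your approach is essentially the paper's: glue $\varphi$ with $\pm\id_{T(F)}$ via Nikulin's theory (identifying the index-two gluing subgroup of $D(\NS(F))$ with $H\cong D(T(F))$), check the result lies in $\Mon^2_{Hdg}(F)$, and invoke \Cref{thm:torelli}. The obstacle you flag dissolves because $\NS(F)$ does not represent $-2$: the equation $3x^2+2xy-3y^2=-1$ becomes $u^2-10y^2=-3$ after setting $u=3x+y$, and $2$ is not a square mod $5$; hence $\calW_{\pex}=\emptyset$, $\Mov(F)=\overline{\mathrm{Pos}(F)}$, and preserving the positive cone is already the full movable-cone condition --- this is exactly the parenthetical remark in the paper's proof.
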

\begin{proof}
    Suppose $\varphi=f^*$ for $f\in \Bir(F)$. Then $\varphi$ restricts to a Hodge isometry of $T(F)$, and since $F$ is very general, the only Hodge isometries of $T(F)$ are $\pm\id_{T(F)}.$ By \cite[Prop 3.2.6]{hassett}, the discriminant group $D(T(F))\cong \bZ/20\bZ.$  The overlattice $H^2(F,\bZ)\supset \NS(F)\oplus T(F)$ corresponds to the index two subgroup $H\subset D(NS(F))$ such that $(H, q_{NS(F)})\cong (D(T(F)), -q_{T(F)})$, and hence $\varphi$ acts on $H$ by $\pm \id.$

    Conversely, if $\varphi$ preserves the positive cone and has the prescribed action on $H,$ then one can extend $\varphi$ to an isometry $\tilde{\varphi}\in \Orth(H^2(F,\bZ))$ (here we are using the fact that $q$ does not represent $-2$, so the positive cone and movable cone coincide). By construction, $\tilde{\varphi}$ preserves the Hodge structure, and hence $\varphi \in \im(\pi\colon\Mon^2_{Hdg}(F)\rightarrow \Orth(\NS(F))).$ By \Cref{thm:torelli}, the conclusion follows.
\end{proof}

From this, it is easy to calculate the birational automorphism group:

\begin{lemma}\label{lemma:20inducedbybir}
    The subgroup of $\Orth(\NS(F))$ induced by birational automorphisms is $\langle AB\rangle$. In particular, $\Bir(F)\cong\bZ$.
\end{lemma}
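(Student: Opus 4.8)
The plan is to apply the criterion of Lemma~\ref{lemma:20discgroupaction} together with explicit computations in $\NS(F)$ and its discriminant group. That lemma identifies the image of $\Bir(F)\to\Orth(\NS(F))$ with $\{\varphi\in\Orth(\NS(F)):\varphi\text{ preserves }\mathrm{Pos}(F)\text{ and acts as }\pm\id\text{ on }H\cong\bZ/20\bZ\}$. First I would pin down the positive-cone-preserving subgroup. A direct check gives $A^2=B^2=\id$ and $\det(AB)=1$, $\mathrm{tr}(AB)=38$, so $AB$ has infinite order and $\langle A,B\rangle$ is the infinite dihedral group, whose elements are the rotations $(AB)^n$ and the reflections $(AB)^nA$, $n\in\bZ$. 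In particular $-\id\notin\langle A,B\rangle$ (reflections have determinant $-1$, rotations are powers of $AB$), so $\Orth(\NS(F))=\{\pm\id\}\times\langle A,B\rangle$. Since $-\id$ swaps the two components of $\{x\in\NS(F)\otimes\bR:x^2>0\}$ while $A$ and $B$ preserve $\mathrm{Pos}(F)$ by hypothesis, the subgroup preserving $\mathrm{Pos}(F)$ is exactly $\langle A,B\rangle$; so the image of $\Bir(F)$ is $\{\varphi\in\langle A,B\rangle:\varphi\text{ acts by }\pm\id\text{ on }H\}$.

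Next I would compute the induced action on the discriminant group. From the Gram matrix, $A(g)=3g+4\lambda$, $A(\lambda)=-2g-3\lambda$, $B(g)=3g-2\lambda$, $B(\lambda)=4g-3\lambda$. Evaluating on the generators $\tfrac{g+2\lambda}{5}$ and $\tfrac{g+\lambda}{4}$ of $H\cong\bZ/5\bZ\oplus\bZ/4\bZ$, one finds $A$ acts by $-\id$ on the $\bZ/5\bZ$ summand and $+\id$ on the $\bZ/4\bZ$ summand, while $B$ acts by $+\id$ on $\bZ/5\bZ$ and $-\id$ on $\bZ/4\bZ$. Under the isomorphism $H\cong\bZ/20\bZ$ this means $A$ acts by multiplication by $9$ and $B$ by multiplication by $11$; consequently $AB$ acts by $9\cdot11\equiv-1\pmod{20}$.

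The action on $H$ thus defines a homomorphism $\rho\colon\langle A,B\rangle\to(\bZ/20\bZ)^\times$ with $\rho(AB)=-1$, $\rho(A)=9$, $\rho(B)=11$. Then $\rho((AB)^n)=(-1)^n\in\{\pm1\}$ for all $n$, whereas $\rho((AB)^nA)=(-1)^n\cdot9\in\{9,11\}$ is never $\pm1$. Hence $\varphi\in\langle A,B\rangle$ acts by $\pm\id$ on $H$ if and only if $\varphi\in\langle AB\rangle$, so by the first paragraph the image of $\Bir(F)$ in $\Orth(\NS(F))$ is precisely $\langle AB\rangle$.

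Finally, $\langle AB\rangle\cong\bZ$ since $AB$ has infinite order, so to conclude $\Bir(F)\cong\bZ$ it remains to see that $\Bir(F)\to\Orth(\NS(F))$ is injective. An element of the kernel fixes the ample Pl\"ucker class $g$, hence preserves $\Amp(F)$ and so is biregular by Theorem~\ref{thm:torelli}; since $F$ is very general, a short glueing argument (using that $2H\ne0$ rules out the ``anti-symplectic'' possibility, exactly as in the proof of Lemma~\ref{lemma:20discgroupaction}) shows it acts trivially on all of $H^2(F,\bZ)$, and then via $\Aut(F)\cong\Aut(X)$ and the faithful action of $\Aut(X)$ on $H^4(X,\bZ)$ it is the identity. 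I expect this injectivity input to be the only genuinely delicate point, together with the routine but essential check that no reflection of the dihedral group $\langle A,B\rangle$ acts as $\pm\id$ on $H$; the remaining discriminant-group computations are mechanical.
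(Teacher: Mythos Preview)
Your proof is correct and follows essentially the same approach as the paper: both apply the criterion of Lemma~\ref{lemma:20discgroupaction}, verify that $A$ and $B$ individually fail to act by $\pm\id$ on $H$ while $AB$ acts by $-\id$, and use the dihedral structure of $\langle A,B\rangle$ to conclude that exactly the rotations $(AB)^n$ satisfy the criterion. The only minor difference is in the injectivity step: the paper simply invokes the fact that a very general $X\in\calC_{20}$ has trivial automorphism group (citing \cite{BG24} and \cite{BFMQ24}), whereas you argue directly via the glueing of discriminant groups and the faithfulness of $\Aut(X)$ on cohomology---both routes are valid and short.
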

\begin{proof}
    Isometries preserving the positive cone belong to the subgroup $\langle A,B\rangle$. Neither $A$ nor $B$ acts by $\pm\mathrm{Id}$ on the subgroup $H\subset D(\NS(F))$ from Lemma~\ref{lemma:20discgroupaction}, but $AB$ acts on $H$ by $-\mathrm{Id}$. Since $A$ and $B$ are reflections, we deduce that $AB$ generates the subgroup of $\Orth(\NS(F))$ induced by birational automorphisms.

    A very general member of $\calC_{20}$ has no automorphisms by \cite[Theorem 3.8]{BG24}, so the map $\Bir(F)\to\Orth(\NS(F))$ has trivial kernel (for details, see \cite[Proposition 2.18]{BFMQ24}). Since $AB$ has infinite order, we deduce $\Bir(F)\cong\bZ$.
\end{proof}

The orbits of the chambers of $\Mov(F)$ under the action of $\Bir(F)$ correspond to isomorphism classes of birational hyperk\"ahler  models of $F$.

\begin{lemma}\label{lemma:disc20chambers}
    $\Bir(F)$ acts freely and transitively on the chambers of $\Mov(F)$.
\end{lemma}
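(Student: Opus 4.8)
The plan is to combine the explicit description of $\NS(F)$ and $\Orth(\NS(F))$ above with the chamber structure of the movable cone from \Cref{thm: Mov and Amp}. First one notes that $q$ represents neither $-2$: a solution of $6x^2+4xy-6y^2=-2$ would produce a solution of $u^2-10y^2=-3$ with $u=3x+y$, which is impossible modulo $5$. Hence $\calW_{\pex}=\emptyset$, so by \Cref{thm: Mov and Amp}(1) we have $\Mov(F)=\overline{\mathrm{Pos}(F)}$ and $W_{Exc}=1$, whence $\Mon^2_{Hdg}(F)=W_{Bir}$. By \Cref{lemma:20inducedbybir} the group $\Bir(F)\cong\bZ$ then acts faithfully on $\NS(F)_\bR$ through $\langle AB\rangle$, and it permutes the chambers into which the walls $\rho^\perp$, $\rho\in\calW_{\flop}=\{\rho\in\NS(F):\rho^2=-10,\ \mathrm{div}(\rho)=2\}$, cut $\Mov(F)$.

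The heart of the argument is to identify this chamber pattern using the fact that $\Orth^+(\NS(F)):=\langle A,B\rangle$, the isometries preserving $\mathrm{Pos}(F)$, is infinite dihedral, generated by the two reflections $A=s_{\rho_0}$ in $\rho_0=g+2\lambda$ and $B=s_{v_1}$ in $v_1=g-\lambda$ (both identities being a one-line matrix check). Here $\rho_0^2=-10$ and a short computation with the index-two subgroup $H\subset D(\NS(F))$ of \Cref{lemma:20discgroupaction} gives $\mathrm{div}(\rho_0)=2$, so $\rho_0\in\calW_{\flop}$; on the other hand $v_1^2=-4$, so the ray $v_1^\perp$ is \emph{not} a flopping wall. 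The mirrors of the reflections in $\langle A,B\rangle$ tessellate $\mathrm{Pos}(F)$ into dihedral chambers, and in this tessellation the $\langle A,B\rangle$-translates of $\rho_0^\perp$ and of $v_1^\perp$ strictly alternate, while the hyperbolic element $AB$ is precisely the translation carrying each $\rho_0^\perp$-translate to the next one. I would then prove the key claim that $\Orth^+(\NS(F))$ acts transitively on $\calW_{\flop}/\{\pm1\}$; granting it, the flopping walls are exactly the $\langle A,B\rangle$-orbit of $\rho_0^\perp$, i.e. every other wall of the dihedral tessellation, the intervening $v_1^\perp$-translates not being flopping walls. Thus each chamber of $\Mov(F)$ is the union of two adjacent dihedral chambers glued along a (non-flopping) $v_1^\perp$-translate, so $AB$ carries each chamber of $\Mov(F)$ onto its neighbour. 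Since $\Bir(F)$ acts through $\langle AB\rangle$, its action on the chambers of $\Mov(F)$ is therefore transitive; it is also free, because $AB$ has infinite order (determinant $1$, trace $38$, hence irrational eigenvalues $19\pm6\sqrt{10}$) and, being a hyperbolic isometry of a signature-$(1,1)$ lattice, has isotropic and thus irrational eigenrays, so no nontrivial power of it fixes a rational ray, let alone a chamber.

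The step I expect to be the main obstacle is the transitivity of $\Orth^+(\NS(F))$ on $\calW_{\flop}/\{\pm1\}$, i.e. that every $\rho\in\NS(F)$ with $\rho^2=-10$ and $\mathrm{div}(\rho)=2$ is $\langle A,B\rangle$-equivalent to $\pm\rho_0$. Since $\NS(F)$ has rank $2$, Eichler's criterion is unavailable, so instead one must analyse the solutions of the Pell-type equation $u^2-10y^2=-15$ coming from $\rho^2=-10$, impose the divisibility condition, and verify that up to sign they form a single $\langle A,B\rangle$-orbit. Alternatively one could derive transitivity of $\Bir(F)$ on the chambers by invoking \Cref{prop: existence of cubic} to show that $F$ has no birational hyperk\"ahler model other than itself, but this reduces to essentially the same lattice bookkeeping.
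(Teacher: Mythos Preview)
Your proposal is correct and, at its core, the same as the paper's: both reduce to classifying the $(-10)$-classes in $\NS(F)$ and showing they form a single $\langle AB\rangle$-orbit up to sign, after which freeness and transitivity on chambers are formal. The paper carries this out by the explicit Pell-type descent you anticipate (if $|a|>1$ then one of $(AB)^{\pm1}\rho$ has smaller first coordinate, so every $(-10)$-class is $\pm(AB)^n(g+2\lambda)$), and then deduces freeness on chambers from freeness on walls exactly as you do with the eigenvalue argument.

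Your dihedral repackaging is a genuine improvement in one respect you do not quite exploit: once you have identified $A=s_{\rho_0}$ and $B=s_{v_1}$ and know $\Orth^+(\NS(F))=\langle A,B\rangle$, the transitivity you flag as the main obstacle comes for free, with no Pell analysis needed. Indeed, for any $\rho$ with $\rho^2=-10$ one checks (a one-line mod-$5$ computation on $3a^2+2ab-3b^2=-5$) that $5\mid \rho\cdot v$ for all $v\in\NS(F)$, so $s_\rho\in\Orth^+(\NS(F))=\langle A,B\rangle$. Every reflection in the infinite dihedral group is conjugate to one of the two generating reflections, and conjugate reflections fix vectors of the same square; since $v_1^2=-4\neq-10$, $s_\rho$ must be conjugate to $A$, i.e.\ $\rho^\perp$ lies in the $\langle A,B\rangle$-orbit of $\rho_0^\perp$. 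This replaces the paper's descent entirely. Your proposed alternative via \Cref{prop: existence of cubic} would be circular, since the count of cubics with Fano birational to $F$ is deduced \emph{from} this lemma.
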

\begin{proof}
    As previously mentioned, $q$ does not represent $-2$, so $\Mov(F)=\overline{\mathrm{Pos}(F)}$. Let $\rho=ag+b\lambda$ where $6a^2+4ab-6b^2=-10$, so $\rho^\perp$ is a wall in $\Mov(F)$. If $|a|>1$, then $|13a+18b|<|a|$ or $|25a-18b|<|a|$, so exactly one of the classes $AB\cdot\rho$ or $(AB)^{-1}\cdot\rho$ has smaller first coordinate than $\rho$. Inductively, we see that $\rho=\pm(AB)^n\cdot(g+2\lambda)$ for a unique integer $n$. Hence $\Bir(F)$ acts freely and transitively on the walls of $\Mov(F)$. Further, $\Bir(F)$ acts transitively on the chambers of $\Mov(F)$ since $\Mon^2_{Hdg}\cong W_{Bir}\cong \Bir(F)$. 

    It is now easy to see that $\Bir(F)$ acts freely on the chambers of $\Mov(F)$. Indeed, if $(AB)^n$ fixes a chamber of $\Mov(F)$, then $(AB)^{2n}$ fixes both walls of that chamber. Since $\Bir(F)=\langle AB \rangle$ acts freely on the walls, we get $n=0$.
\end{proof}

\begin{lemma}\label{lemma:disc20polarizations}
    $\Nef(F)$ contains exactly two square 6 divisibility $2$ classes, i.e. $g$ and $B\cdot g=3g-2\lambda$.
\end{lemma}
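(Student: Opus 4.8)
The plan is to classify all primitive classes $\rho = ag + b\lambda \in \NS(F)$ with $\rho^2 = 6$ and $\mathrm{div}(\rho) = 2$, and then determine which of them lie in $\Nef(F)$. First I would write out the Diophantine condition: $\rho^2 = 6a^2 + 4ab - 6b^2 = 6$, i.e. $3a^2 + 2ab - 3b^2 = 3$. One solution is $(a,b) = (1,0)$, giving $g$ itself. I would then observe that $\Orth(\NS(F))$ acts on the set of such classes, and since $\Orth(\NS(F)) = \langle -1, A, B\rangle$ with $A, B$ reflections preserving $\mathrm{Pos}(F)$, every square-$6$ class in the positive cone is obtained from $g$ by applying words in $A$ and $B$ (up to sign). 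The divisibility-$2$ condition must be checked: I would verify that $\mathrm{div}(g) = 2$ is preserved by the action (it is, since $A, B$ are isometries, so divisibility is an invariant of the orbit). So the square-$6$, divisibility-$2$ classes in $\overline{\mathrm{Pos}(F)}$ are exactly the $\Orth$-orbit of $g$ inside the positive cone.

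Next I would use Lemma~\ref{lemma:disc20chambers}: $\Bir(F) = \langle AB \rangle$ acts freely and transitively on the chambers of $\Mov(F) = \overline{\mathrm{Pos}(F)}$, and each chamber is the nef cone of one birational model. The nef cone $\Nef(F)$ is a single chamber, bounded by two walls of the form $\rho^\perp$ with $\rho \in \calW_{\flop}$ (square $-10$, divisibility $2$), by Theorem~\ref{thm: Mov and Amp}. A square-$6$ divisibility-$2$ class lies in $\Nef(F)$ iff it lies strictly between those two walls. Since $g$ is the Plücker polarization and is ample, $g \in \Nef(F)$. Applying powers of $AB$ moves $g$ into the other chambers; since $\Bir(F)$ acts freely and transitively on chambers, exactly the images of $g$ under $\mathrm{Stab}$ of the chamber $\Nef(F)$ remain in $\Nef(F)$ — but that stabilizer is trivial, so among the $\langle AB\rangle$-orbit only $g$ (and $-g$, excluded since it is in the opposite cone) lies in $\Nef(F)$. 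However, the full orbit under $\Orth(\NS(F))$ is larger than the $\langle AB\rangle$-orbit, since $A$, $B$ individually are not in $\Bir(F)$. I would compute $A \cdot g$ and $B \cdot g$ directly: $B \cdot g = 3g - 2\lambda$, and check $(3g-2\lambda)^2 = 6$ and divisibility $2$. The claim is that $g$ and $Bg$ are the two square-$6$ divisibility-$2$ classes in $\Nef(F)$. Geometrically this reflects that the two scrolls $T$ and $T^\vee$ (or here, the two boundary behaviors) give the two Plücker-type classes realizing $F$ as a Fano variety of lines, matching Proposition~\ref{prop: existence of cubic}.

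To finish, I would argue that $g$ and $Bg$ both lie in the single chamber $\Nef(F)$ (not separated by any $\calW_{\flop}$ wall), whereas $A \cdot g$ and all other orbit elements lie in adjacent chambers. Concretely: the walls of $\Nef(F)$ are $\rho^\perp$ for the two $\calW_{\flop}$ classes closest to $g$; I would identify these two classes (solving $6a^2+4ab-6b^2 = -10$, div $2$, picking the ones adjacent to $g$, e.g. $g+2\lambda$ and its image under a suitable reflection), check that $g$ and $3g-2\lambda$ have positive pairing with both wall normals oriented inward, and check that $A \cdot g$ does not. The main obstacle I anticipate is the bookkeeping to pin down precisely which square-$6$ classes fall in $\Nef(F)$ versus the neighboring movable chambers: this requires carefully tracking the wall structure of $\Mov(F)$ near $g$ and confirming that the reflection $B$ is an "internal" symmetry of the chamber $\Nef(F)$ while $A$ is not — equivalently, that $B$ fixes $\Nef(F)$ setwise (swapping its two walls) whereas $A$ sends $\Nef(F)$ to an adjacent chamber. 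Once that wall combinatorics is settled, the count of exactly two classes follows immediately.
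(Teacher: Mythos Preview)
Your outline is close to the paper's, but one step is asserted without justification: you claim that every square-$6$ class in the positive cone lies in the $\langle A,B\rangle$-orbit of $g$, citing only that $\Orth(\NS(F))$ acts on such classes. Action does not imply transitivity; in a rank-$2$ lattice this requires exactly the Pell-type descent the paper carries out (mirroring the proof of Lemma~\ref{lemma:disc20chambers}), showing that every square-$6$ divisibility-$2$ class is $\pm(AB)^n\cdot g$ or $\pm(AB)^n\cdot(3g-2\lambda)$ for a unique $n$. With that in hand, the paper finishes in one line by invoking the free $\langle AB\rangle$-action on chambers from Lemma~\ref{lemma:disc20chambers}: two $\langle AB\rangle$-orbits, hence exactly two classes per chamber.

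Your proposed endgame---verifying that $B$ stabilizes $\Nef(F)$ setwise while $A$ does not---is correct (one computes $B\cdot(g+2\lambda)=11g-8\lambda$, so $B$ swaps the two flop walls bounding $\Nef(F)$) and amounts to identifying $\mathrm{Stab}_{\langle A,B\rangle}(\Nef(F))=\{1,B\}$. Combined with the simple transitivity of $\langle AB\rangle$ on chambers and the index-$2$ inclusion $\langle AB\rangle\subset\langle A,B\rangle$, this recovers the same count. It is a pleasant alternative viewpoint, but note that it still presupposes the two-orbit classification (equivalently, that $\langle A,B\rangle$ acts transitively on positive square-$6$ classes), so the Pell step cannot be skipped.
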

\begin{proof}
    Applying an argument similar to the propagation of solutions to Pell's equation, as in the proof of Lemma~\ref{lemma:disc20chambers}, one finds that any class of square $6$ and divisibility $2$ is of the form 
    \[
    \pm (AB)^n\cdot g \text{ or } \pm (AB)^n\cdot(3g-2\lambda)
    \]
    for a unique integer $n$. The result now follows from the fact that $\Bir(F)$ acts freely on the chambers of $\Mov(F)$.
\end{proof}

\begin{proposition}\label{proposition:20numberofcubics}
    $F$ admits only one birational hyperk\"ahler model up to isomorphism, and there are precisely two smooth cubic fourfolds whose Fano varieties are birational (hence isomorphic) to $F$.
\end{proposition}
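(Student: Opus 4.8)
The plan is to derive both assertions from Lemmas~\ref{lemma:disc20chambers} and~\ref{lemma:disc20polarizations} together with \Cref{prop: existence of cubic}, using the Torelli theorem of Markman (\Cref{thm:torelli}) to upgrade the relevant birational maps of hyperk\"ahler fourfolds to honest isomorphisms. The first assertion is immediate: as recalled just before \Cref{lemma:disc20chambers}, isomorphism classes of birational hyperk\"ahler models of $F$ are in bijection with the $\Bir(F)$-orbits of chambers of $\Mov(F)$, and by \Cref{lemma:disc20chambers} there is only one such orbit.

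For the second assertion, let $X''$ be any smooth cubic fourfold whose Fano variety $F''\coloneqq F(X'')$ is birational to $F$. By the first assertion $F''\cong F$; fixing an isomorphism, let $h\in\NS(F)$ be the class corresponding to the Pl\"ucker polarization $g''$ of $F''$, so that $h\in\Amp(F)$, $h^2=6$, and $\mathrm{div}(h)=2$. By \Cref{lemma:disc20polarizations}, either $h=g$ or $h=Bg=3g-2\lambda$. If $h=g$, then $(F'',g'')\cong(F,g)$, so $X''\cong X$ by the Global Torelli theorem for cubic fourfolds \cite{voisintorelli}. If $h=Bg$, I would apply \Cref{prop: existence of cubic} with the isometry $B\in\Orth(\NS(F))$ (note $B(g)=Bg$, and $Bg\in\Nef(F)\subseteq\Mov(F)$ has square $6$ and divisibility $2$): this produces a smooth cubic fourfold $X'$ with Fano variety $F'$ and a birational map $\pi\colon F'\dashrightarrow F$ with $\pi^*(Bg)$ equal to the Pl\"ucker polarization $g'$ of $F'$. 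Since $\pi^*$ carries the wall-and-chamber decomposition of $\Mov(F)$ to that of $\Mov(F')$ and $\pi^*(Bg)=g'$ lies in the interior of $\Nef(F')$, the class $Bg$ lies in the interior of a chamber of $\Mov(F)$; as $Bg\in\Nef(F)$ by \Cref{lemma:disc20polarizations}, that chamber is $\Nef(F)$, so $\pi^*\Amp(F)=\Amp(F')$ and $\pi$ is an isomorphism by \Cref{thm:torelli}. Hence $(F',g')\cong(F,Bg)\cong(F'',g'')$, so $X''\cong X'$ by Global Torelli. Thus every smooth cubic fourfold whose Fano variety is birational to $F$ is isomorphic to $X$ or to $X'$, and $X'$ is itself such a cubic.

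It remains to check $X\not\cong X'$, so that the count is exactly two. By the final clause of \Cref{prop: existence of cubic} applied with $h=Bg$, an isomorphism $X\cong X'$ would require some $\phi\in\Bir(F)$ with $\phi^*(Bg)=g$; by \Cref{lemma:20inducedbybir} we could write $\phi^*=(AB)^m$, so $(AB)^m$ would carry the interior point $Bg$ of $\Nef(F)$ to the interior point $g$ of $\Nef(F)$ and hence fix the chamber $\Nef(F)$, forcing $m=0$ and $Bg=g$ by the freeness in \Cref{lemma:disc20chambers} --- a contradiction. Therefore $F$ has a unique birational hyperk\"ahler model up to isomorphism and exactly two cubic fourfolds have Fano variety birational (hence isomorphic) to $F$.

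The step I expect to require the most care is the passage from birational maps to isomorphisms --- both the identification $F''\cong F$ coming from the first assertion and the map $\pi$ from \Cref{prop: existence of cubic} --- and the accompanying bookkeeping needed to be sure that the cubic $X'$ produced carries Pl\"ucker class $Bg$ rather than $g$. This all rests on \Cref{thm:torelli} and on the fact that the induced isometry on $H^2$ respects the wall-and-chamber structure of \Cref{thm: Mov and Amp}; once that is granted, the only remaining content is the lattice-theoretic input of Lemmas~\ref{lemma:disc20chambers} and~\ref{lemma:disc20polarizations}, which is already established.
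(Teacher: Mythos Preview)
Your argument is correct and follows the same route as the paper's proof, which simply cites \Cref{lemma:disc20chambers} for the first claim and \Cref{prop: existence of cubic} together with \Cref{lemma:disc20polarizations} for the second; you have merely unpacked the details the paper leaves implicit (in particular the verification that $X\not\cong X'$ via \Cref{lemma:20inducedbybir} and the freeness in \Cref{lemma:disc20chambers}). One small point: your appeal to \Cref{thm:torelli} to conclude that $\pi$ is an isomorphism is not quite a direct citation, since that theorem is stated for self-maps of $F$; but since you already know $F'\cong F$ by the first assertion, you can compose $\pi$ with such an isomorphism to reduce to a birational self-map and then apply \Cref{thm:torelli} legitimately (using the injectivity of $\Bir(F)\to\Orth(\NS(F))$ from \Cref{lemma:20inducedbybir}).
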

\begin{proof}
    The orbits of the action of $\Bir(F)$ on chambers of $\Mov(F)$ correspond to birational models of $F$ up to isomorphism, so Lemma~\ref{lemma:disc20chambers} proves the first claim. Proposition~\ref{prop: existence of cubic} and Lemma~\ref{lemma:disc20polarizations} prove the second claim.
\end{proof}

\subsection{Geometry}\label{subsec:geometry}
In \cite{FL24}, the authors show that a very general $X\in \calC_{20}$ is birational to some non-isomorphic cubic fourfold $X'\in \calC_{20}.$ We briefly review their construction before proving that $X$ and $X'$ have isomorphic Fano varieties of lines.

Let $V\subset \bP^5$ be the Veronese surface, and let $f\colon \bP^5\dashrightarrow \bP^5$ be the birational map given by the linear system $|\calI_V(2)|$. Applying an appropriate projective linear transformation, we can assume that $f$ is an involution. For any cubic fourfold $X$ containing $V$, the image $X':=f(X)$ is again a cubic fourfold containing a Veronese surface $V'$ \cite[Proposition 3.1]{FL24}. We also assume that $X$ is general enough so that $X'$ is smooth.

Let $\Gamma$ be the graph of $f|_X$, which can be identified with \(\Bl_VX\cong\Bl_{V'}X'\), and $p_1\colon\Gamma\to X$ and $p_2\colon\Gamma\to X'$ its projections. Denote by $E$ and $E'$ the classes of the exceptional loci of $p_1$ and $p_2$, respectively, and let $H$ and $H'$ be the pullbacks of the hyperplane classes on $X$ and $X'$. Since $f$ and $f^{-1}$ are induced by the complete linear systems of quadrics containing $V$ and $V'$, respectively, we have
\[
H'=2H-E,\;H=2H'-E'
\]
(cf \cite[Section 3.1]{FL24}). We record some observations:
\begin{itemize}
    \item if $\ell\subset X$ is a line not meeting $V$, then $f(\ell)$ is a conic meeting $V'$ in degree $3$;
    \item if $\ell\subset X$ is a line meeting $V$ in one point $x$, then the proper transform of $\ell$ in $X'$ is a line $\ell'$, whereas the total transform is the union of $\ell'$ and the line $p_2(p_1^{-1}(x))$;
    \item if $\ell\subset X$ is a line meeting $V$ in two points $x$ and $y$ (or one point $x$ with tangency), then the proper transform of $\ell$ is a point, whereas the total transform of $\ell$ is the union of lines $p_2(p_1^{-1}(x))\cup p_2(p_1^{-1}(y))$.
\end{itemize}
In all cases, the total transform of a line $\ell\subset X$ is a conic, which we denote $C_\ell$.

Let $F$ and $F'$ be the Fano varieties of lines on $X$ and $X'$, respectively. We construct a morphism $\pi\colon F\to F'$ as follows: $\pi([\ell])$ is the line residual to $C_\ell$ in the intersection $\langle C_\ell\rangle\cap X'$, where $\langle C_\ell\rangle$ is the plane spanned by $C_\ell$. A morphism $\pi'\colon F'\to F$ is defined analogously.

\begin{proposition}\label{proposition:20isomorphicfanos}
    The maps $\pi\colon F\to F'$ and $\pi'\colon F'\to F$  are mutually inverse isomorphisms.
\end{proposition}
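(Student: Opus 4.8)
The plan is to show that $\pi$ and $\pi'$ are everywhere-defined morphisms, that $\pi'\circ\pi=\mathrm{id}_F$ and $\pi\circ\pi'=\mathrm{id}_{F'}$ after restriction to a dense open subset, and hence — since a pair of morphisms of reduced separated schemes agreeing on a dense open set agree everywhere — that $\pi,\pi'$ are genuinely mutually inverse isomorphisms. First I would check that $\pi$ (and, symmetrically, $\pi'$) is a morphism. Given any line $\ell\subset X$, the total transform $C_\ell=\mathrm{pr}_2(\mathrm{pr}_1^{-1}(\ell))$ has $\bP^5$-degree $2$, because $\mathrm{pr}_2$ is given by the linear system $|2H-E|$; and by the trichotomy recalled above it is a reduced conic away from the codimension-$\geq 2$ locus of lines meeting $V$ tangentially or in a degenerate way, so $\langle C_\ell\rangle$ is a plane. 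Since $X'$ is again a very general member of $\calC_{20}$ (in particular $X'\notin\calC_8$, so $X'$ contains no plane), $\langle C_\ell\rangle\cap X'$ is an honest plane cubic properly containing $C_\ell$, and its residual component is a line on $X'$. As $\ell$ varies, $C_\ell$, its span, and the residual line fit into flat families over $F$, so $\pi$ is a morphism on the locus where $C_\ell$ is a reduced conic, and — this being a mild point, the complement having codimension $\geq 2$ in the smooth variety $F$ — it extends to a morphism $\pi\colon F\to F'$.

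The bulk of the work is checking $\pi'\circ\pi=\mathrm{id}_F$ on a dense open set; the identity $\pi\circ\pi'=\mathrm{id}_{F'}$ then follows by exchanging the roles of $(X,V,f)$ and $(X',V',f^{-1})$. Let $U\subset F$ be the dense open locus of lines $\ell$ disjoint from $V$ for which $C_\ell=f(\ell)$ is a smooth conic meeting $V'$ transversally in three non-collinear points $q_1,q_2,q_3$, meeting its residual line $m:=\pi([\ell])$ in two points off $V'$, and avoiding the further bad configurations below. For $\ell\in U$ the plane $\langle C_\ell\rangle=\langle q_1,q_2,q_3\rangle$ meets $V'$ in a fourth point $q_4$, and from $q_4\in\langle C_\ell\rangle\cap X'=C_\ell\cup m$ with $q_4\notin C_\ell$ we get $q_4\in m$. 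Applying the trichotomy to the line $m$ under $f^{-1}$ (whose base locus is $V'$), the total transform $C_m$ is $m'\cup L_{q_4}$, where $m'$ is the proper transform of $m$ — a line in $X$ — and $L_{q_4}\subset X$ is the line over $q_4$ (the two meeting at the image of the point of the strict transform of $m$ lying over $q_4$). One then checks that $\ell$ lies in the plane $\langle m'\cup L_{q_4}\rangle$ and passes through $m'\cap L_{q_4}$; a degree count inside that plane then forces $\langle m'\cup L_{q_4}\rangle\cap X=m'\cup L_{q_4}\cup\ell$, so the residual line is $\ell$, i.e.\ $\pi'(\pi([\ell]))=[\ell]$. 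Since $\pi'\circ\pi$ and $\mathrm{id}_F$ are morphisms agreeing on the dense set $U$ and $F$ is reduced and separated, they agree on $F$; hence $\pi$ and $\pi'$ are mutually inverse isomorphisms.

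I expect the main obstacle to be the incidence bookkeeping in the second step: verifying that a general $\ell\in U$ really avoids the degenerate configurations (collinear $q_i$; $m$ meeting $V'$ in more than one point; $C_m$ collapsing to a double line because $m'=L_{q_4}$; hidden tangencies; a conic contained in a Veronese), and controlling precisely how $\ell$ meets the lines $L_{q_i}$ over the points where the relevant conics meet the Veronese surfaces, so that the degree comparison in the plane $\langle m'\cup L_{q_4}\rangle$ is legitimate; the same kind of care underlies the ``reduced conic'' claim in the first step. As a consistency check, \Cref{proposition:20numberofcubics} already tells us that $F$ has a unique birational hyperk\"ahler model, so it is automatic that any cubic fourfold with Fano variety birational to $F$ has Fano variety \emph{isomorphic} to $F$; the real content here is the explicit geometric identification of that isomorphism with $\pi$.
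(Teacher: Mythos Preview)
Your overall strategy—show that $\pi'\circ\pi=\mathrm{id}$ on a dense open set and conclude by separatedness—is the same as the paper's, but the geometric core of your second paragraph contains a genuine error. You assert that the plane $\langle C_\ell\rangle=\langle q_1,q_2,q_3\rangle$ meets $V'$ in a fourth point $q_4$, and hence that the residual line $m=\pi([\ell])$ meets $V'$ at $q_4$. This is false. Three points on the Veronese $V'\cong\bP^2$ corresponding to three non-collinear points of $\bP^2$ span a plane that meets $V'$ in \emph{exactly} those three points: a fourth point $v(p_4)$ would lie in that plane only if every conic through $p_1,p_2,p_3$ also passed through $p_4$, but the net of conics through three non-collinear points has no further base point. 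So for general $\ell$ the residual line $\ell'=m$ is \emph{disjoint} from $V'$, and your decomposition $C_m=m'\cup L_{q_4}$ via the ``one intersection point'' case of the trichotomy is not the relevant one; the entire chain of incidences built on $q_4$ collapses.

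The paper's argument is much shorter and uses precisely the fact you missed. Since $\ell'$ is generically disjoint from $V'$, the birational inverse $f^{-1}$ is regular along $\ell'$, and the total transform $C'_{\ell'}=f^{-1}(\ell')$ is an honest conic. The two points of $\ell'\cap C_\ell$ are carried by $f^{-1}$ to two points of $C'_{\ell'}$; but these two points of $C_\ell$ came from $\ell$ via $f$, so $f^{-1}$ returns them to $\ell$ (using $f^{-1}\circ f=\mathrm{id}$). Thus $\ell$ and the conic $C'_{\ell'}$ meet in two points, hence are coplanar, hence $\ell$ is the residual line to $C'_{\ell'}$, i.e.\ $\pi'([\ell'])=[\ell]$. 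No tracking of intersections with $V$ or $V'$, and none of the delicate bookkeeping you flagged as the ``main obstacle,'' is needed.
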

\begin{proof}
    Let $\ell\subset X$ be a line, and let $C_\ell$ be its total transform in $X'$, a conic. By definition, $\pi([\ell])=[\ell']$ where $\ell'$ is the line in $X'$ residual to $C_\ell$. Let $C'_{\ell'}$ be the total transform of $\ell'$ in $X$, again a conic.

    In general, $\ell'$ is disjoint from $V'$, so $f^{-1}$ maps the two points $\ell'\cap C_\ell$ to two points $\ell\cap C'_{\ell'}$. Here, we are using that $f^{-1}\circ f=\mathrm{id}$. In particular, this means $\ell$ and $C'_{\ell'}$ are coplanar, so $\pi'([\ell'])=\ell$. In other words, $\pi'\circ\pi=\mathrm{id}$ on an open set.
\end{proof}

Proposition~\ref{proposition:20numberofcubics} and Proposition~\ref{proposition:20isomorphicfanos} together prove Theorem~\ref{theorem:20}. We finish by identifying the pullback of the Pl\"ucker class under $\pi$, connecting the lattice-theoretic computations of the previous subsection to the geometry outlined in this subsection. Specifically, we prove that the pullback of the Pl\"ucker class by $\pi$ is the unique other square $6$ divisibility $2$ class in $\Nef(F)$, besides $g$.

The two walls of $\Nef(F)$ determined by $(-10)$-classes correspond to Lagrangian planes in $F$. The plane $P_1$ corresponding to the class $g+2\lambda$ is described in \cite[Example 7.16]{ratcurvesholsymp}: it parametrizes the lines in $X$ residual to conics in $V$. We describe the second plane, $P_2$, which necessarily corresponds to the class $11g-8\lambda$. Through a point $x\in V$, there pass exactly two secant lines to $V$ in $X$, in bijection with the two points on $V'$ lying on the line $p_2(p_1^{-1}(x))$. Taking the line in $X$ residual to these secant lines, we obtain a map $V\to F$. A line in $X$ meets at most two secant lines to $V$ since its proper transform can meet $V'$ at most twice, so this map is an embedding, and its image $P_2$ is a plane.

\begin{proposition}
    Let $g$ and $g'$ be the Pl\"ucker classes on $F$ and $F'$, respectively, let $v$ and $v'$ be the images of $[V]$ and $[V']$ under the Abel-Jacobi maps, and let $\lambda=v-g$ and $\lambda'=v'-g'$. Then $\pi^*(g')=3g-2\lambda$.
\end{proposition}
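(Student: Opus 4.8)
The plan is to combine the lattice-theoretic description of $\Nef(F)$ from Section~\ref{subsec:C20lattice} with the fact that a smooth cubic fourfold is recovered from its Fano variety of lines together with the Pl\"ucker polarization. Since $\pi\colon F\to F'$ is an isomorphism (Proposition~\ref{proposition:20isomorphicfanos}) and $g'$ is ample on $F'$, the class $h\coloneqq\pi^*(g')$ is ample on $F$; moreover $h^2=(g')^2=6$ and $\mathrm{div}(h)=\mathrm{div}(g')=2$, because pullback by an isomorphism preserves the BBF form and divisibility. By Lemma~\ref{lemma:disc20polarizations}, the only square-$6$, divisibility-$2$ classes in $\Nef(F)$ are $g$ and $3g-2\lambda=B\cdot g$, so $h\in\{g,\ 3g-2\lambda\}$, and it remains only to exclude $h=g$.

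\textbf{Excluding $h=g$.} Suppose $\pi^*(g')=g$. Then $\pi$ is an isomorphism of polarized hyperk\"ahler fourfolds $(F,g)\xrightarrow{\sim}(F',g')$, so these define the same point of $\calM_6^{(2)}$. Since a cubic fourfold is determined by its Fano variety of lines with the Pl\"ucker polarization — a consequence of the global Torelli theorem for cubic fourfolds \cite{voisintorelli}, and exactly the content of the ``moreover'' clause of Proposition~\ref{prop: existence of cubic} applied to the class $h=g$ with $\phi=\mathrm{id}$ — we would obtain $X\cong X'$. Concretely, the isometry $\pi^*\colon H^2(F',\bZ)\to H^2(F,\bZ)$ restricts to a Hodge isometry on primitive cohomology which, via the Abel--Jacobi isometries of Section~\ref{subsec:Fano}, yields a Hodge isometry $H^4(X',\bZ)\cong H^4(X,\bZ)$ sending $\eta_{X'}$ to $\eta_X$, whence $X\cong X'$. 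This contradicts the fact that $X$ and $X'$ are non-isomorphic (Theorem~\ref{theorem:20}, following \cite{FL24}). Therefore $h\neq g$, so $\pi^*(g')=3g-2\lambda$.

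\textbf{Expected difficulty.} This proposition follows rather quickly from material already in place, so there is no serious obstacle; the only genuinely substantive input is the implication ``$\pi^*(g')=g\Rightarrow X\cong X'$'', which is just global Torelli for cubic fourfolds. The points requiring care are bookkeeping ones: if one routes through Proposition~\ref{prop: existence of cubic}, one must check that $\pi^{-1}\colon F'\to F$ realizes $(F',g')$ as the polarized model associated to $h$ (so that the proposition's ``moreover'' applies to our $X'$); if one argues directly, the step extending the Hodge isometry on primitive lattices by $\eta_{X'}\mapsto\eta_X$ uses that the index-$3$ overlattice $H^4\supset\langle\eta\rangle\oplus H^4_{\mathrm{prim}}$ is forced. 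As an alternative one could give a purely geometric proof, computing the images $\pi(P_1),\pi(P_2)$ among the analogous Lagrangian planes $P_1',P_2'\subset F'$ and thereby pinning down $\pi^*$ on the two boundary rays of $\Nef(F)$ (hence on all of $\NS(F)$); this avoids Torelli but requires a closer study of the Cremona transform on these planes, and I would expect it to be the harder route.
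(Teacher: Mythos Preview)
Your argument is correct and takes a genuinely different route from the paper's. You deduce $\pi^*(g')\in\{g,\,3g-2\lambda\}$ from Lemma~\ref{lemma:disc20polarizations} and then exclude $\pi^*(g')=g$ by invoking global Torelli together with the Fan--Lai result $X\not\cong X'$; the paper instead argues geometrically, showing that $\pi$ sends the Lagrangian plane $P_1\subset F$ (parametrizing lines residual to conics in $V$) to the plane $P_2'\subset F'$ and likewise $P_2\mapsto P_1'$, then reads off $\pi^*$ from the two resulting equations $\pi^*(g'+2\lambda')=11g-8\lambda$ and $\pi^*(11g'-8\lambda')=g+2\lambda$. Your approach is shorter and leans entirely on structure already established in Section~\ref{subsec:C20lattice} plus the external input $X\not\cong X'$; the paper's approach is self-contained (it does not import the non-isomorphism from \cite{FL24}) and yields more, namely the full action of $\pi^*$ on $\NS(F')$ together with an explicit matching of the distinguished Lagrangian planes under $\pi$. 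Amusingly, the alternative you sketch at the end and label ``the harder route'' is exactly what the paper does, and it turns out to be quite short once one observes that the total transform of a secant line to $V$ is a pair of secant lines to $V'$.
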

\begin{proof}
    First, we show $\pi(P_1)=P_2'$. Let $[\ell]\in P_1$ and $[\ell']=\pi([\ell])$; then $\ell$ is secant to $V$, so the total transform of $\ell$ under $f$ consists of two secant lines to $V'$, coplanar with $\ell'$. Hence $[\ell']\in P_2'$. Similarly, $\pi(P_2)=P_1'$.

    It follows that $\pi^*(g'+2\lambda')=11g-8\lambda$ and $\pi^*(11g'-8\lambda')=g+2\lambda$, from which we can calculate $\pi^*(g')$.
\end{proof}

\section{Cubics containing a plane and a cubic scroll}\label{sec:C8C12}

Similar to Section~\ref{sec:nonsyz}, we consider another family of cubic fourfolds with associated GM fourfolds: cubics containing a plane and a cubic scroll intersecting in a point. Let $\calC_{M_1}\subset \calC_8\cap \calC_{12}$ be the locus parametrizing cubic fourfolds $X$ containing a plane $P\subset X$ and a cubic scroll $T\subset X$ such that $[P]\cdot [T]=1$. We prove the following:

\begin{theorem}\label{theorem:C8C12}
    Let $X\in \calC_{M_1}$ be a very general cubic fourfold, and let $F$ be the Fano variety of lines on $X$. Then there exists another smooth cubic fourfold $X'\in \calC_{M_1}$ with Fano variety of lines $F'$ such that
    \begin{enumerate}
        \item[({\bf{BE}})] $X$ and $X'$ are birational but not isomorphic, 
        \item[({\bf{FM}})] $\calA_X\simeq\calA_{X'}$, and
        \item[({\bf{BF}})] $F$ and $F'$ are birational but not isomorphic.
    \end{enumerate}
\end{theorem}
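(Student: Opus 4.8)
The plan is to follow the two-part template of \Cref{sec:nonsyz} and \Cref{sec:20}: first extract the birational geometry of $F$ from lattice theory and produce $X'$ via \Cref{prop: existence of cubic}, and then use the Gushel--Mukai fourfolds attached to the cubic scrolls on $X$ and $X'$ to deduce ({\bf{BE}}) and ({\bf{FM}}), exactly as in the proof of \Cref{thm: nonsyz}. For the lattice input, note that for a very general $X\in\calC_{M_1}$ the algebraic lattice $A(X)$ has rank $3$, with basis $\eta_X,[P],[T]$ and Gram matrix determined by $\eta_X^2=3$, $[P]^2=3$, $[T]^2=7$, $\eta_X\cdot[P]=1$, $\eta_X\cdot[T]=3$, $[P]\cdot[T]=1$. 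Applying the Abel--Jacobi map (which satisfies $\alpha(x)\cdot\alpha(y)=-(x\cdot y)+(x\cdot\eta_X)(y\cdot\eta_X)$) and saturating yields $\NS(F)$ and identifies the Pl\"ucker class $g$; I would then compute $\Orth(\NS(F))$ and its action on the discriminant group with the algorithm of \cite{Mertens} (via \texttt{AutHyp.m}), and --- as in \Cref{lemma:20discgroupaction} and \Cref{lemma:20inducedbybir}, using the Torelli theorem \Cref{thm:torelli} and the discriminant form of $T(F)$ for very general $X$ (cf.\ \cite{hassett}) --- identify the subgroup of $\Orth(\NS(F))$ induced by $\Bir(F)$. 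Then \Cref{thm: Mov and Amp} (applied to $\calW_{\pex}$ and $\calW_{\flop}$) describes $\Mov(F)$, its chambers modulo $\Bir(F)$ (the birational models of $F$), and the classes of square $6$ and divisibility $2$ in $\Mov(F)$.

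This enumeration should exhibit a class $h\in\Mov(F)$ with $h^2=6$ and $\mathrm{div}(h)=2$ lying in a chamber not $\Bir(F)$-equivalent to $\Nef(F)$, together with $\phi\in\Orth(\NS(F))$ with $\phi(g)=h$ but no such $\phi$ induced by $\Bir(F)$. By \Cref{prop: existence of cubic} there is then a cubic fourfold $X'$ with Fano variety $F'$ and a birational map $F'\dashrightarrow F$ carrying the Pl\"ucker class of $F'$ to $h$, with $X\not\cong X'$; and $F\not\cong F'$ because $g$ and $h$ lie in distinct $\Bir(F)$-orbits of chambers of $\Mov(F)$, so that $F$ and $F'$ are different birational models. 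This gives ({\bf{BF}}). It remains to check $X'\in\calC_{M_1}$: since $\NS(F')\cong\NS(F)$ contains the sublattice corresponding to a plane-and-scroll labeling, $X'$ admits a discriminant $8$ and a discriminant $12$ labeling; by \cite{voisintorelli} the $\calC_8$ class is represented by an honest plane, and for $X'$ sufficiently general the $\calC_{12}$ class is represented by a cubic scroll \cite{hassett-thesis}, and a comparison of intersection numbers places $X'$ in $\calC_{M_1}$ (and not in the other two components of $\calC_8\cap\calC_{12}$).

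For ({\bf{BE}}) and ({\bf{FM}}): since $X,X'\in\calC_{12}$, each contains a cubic scroll, and --- after possibly replacing $T$ by $T^\vee$ on each side --- the construction recalled in \Cref{subsec: GM4fold} produces Gushel--Mukai fourfolds $Z_T\bir X$ and $Z_{T'}\bir X'$ containing $\rho$-planes, with associated double EPW sextics $\widetilde W_T^\perp=\widetilde W_{A^\perp}$ and $\widetilde W_{T'}^\perp=\widetilde W_{(A')^\perp}$ where $A=\calP(Z_T)$ and $A'=\calP(Z_{T'})$. By \Cref{thm: EPW for one scroll}, $F\bir\widetilde W_T^\perp$ and $F'\bir\widetilde W_{T'}^\perp$, so composing with $F'\dashrightarrow F$ gives a birational map $\widetilde W_{(A')^\perp}\dashrightarrow\widetilde W_{A^\perp}$. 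Following the proof of \Cref{thm: nonsyz}, I would factor $F\dashrightarrow F'$ through Mukai flops in Lagrangian planes so that, after possibly relabelling $T'$ and $T'^\vee$, this is an \emph{isomorphism} $\widetilde W_{(A')^\perp}\cong\widetilde W_{A^\perp}$; then $A^\perp=(A')^\perp$, hence $A=A'$, so $Z_T$ and $Z_{T'}$ lie in the same fiber of the period map $\calP$ of \Cref{thm: period map GM}. By \Cref{thm: bir GM} they are birationally equivalent Fourier--Mukai partners, so $X\bir Z_T\bir Z_{T'}\bir X'$, which is ({\bf{BE}}); and combining \cite[Theorem~5.8]{KuzPerry} with \Cref{thm: bir GM} gives $\calA_X\simeq\calA_{Z_T}\simeq\calA_{Z_{T'}}\simeq\calA_{X'}$, which is ({\bf{FM}}).

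The main obstacle is the Mukai-flop factorization of $F\dashrightarrow F'$ and its compatibility with the EPW sextics. In \Cref{thm: nonsyz} this came from the prior analysis of $\calC_{nonsyz}$ in \cite{BFMQ24}; here there is no such input, so one must establish from scratch --- using the chamber structure of $\Mov(F)$ from the first step together with a geometric description of the Lagrangian planes in $F$ and their interaction with the scrolls --- that $F\dashrightarrow F'$ factors through $\widetilde W_{A^\perp}$ via flops, and, crucially, that this factorization identifies the double EPW sextics attached to \emph{cubic scrolls} on the two cubics rather than inadvertently matching $\widetilde W_T^\perp$ with a $\widetilde W_{T^\vee}^\perp$ or with a dual sextic; it is precisely this matching that forces $A=A'$, hence the birationality of $X$ and $X'$. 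A secondary difficulty is the rank-$3$ lattice bookkeeping of the first step --- a Pell-type propagation argument to enumerate the walls and the square-$6$ divisibility-$2$ classes of $\Mov(F)$ --- together with verifying the genericity hypotheses, in particular the $6$-nodes-in-general-position condition of \Cref{thm: EPW for one scroll}, for the very general member of $\calC_{M_1}$.
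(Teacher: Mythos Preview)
Your broad strategy --- lattice theory for the birational geometry of $F$, then Gushel--Mukai fourfolds for ({\bf BE}) and ({\bf FM}) --- matches the paper's ingredients, but the order and the central construction are inverted, and the obstacle you flag is precisely what the paper's approach sidesteps.

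The paper constructs $X'$ \emph{geometrically} first, and this is the key idea you are missing. Among all scrolls homologous to $T$ there is a distinguished one, $T_L$, whose directrix is the line $L=P\cap\langle T\rangle$ (this is a short argument using the decomposition of $F(Y)$ from \cite{flops}). For \emph{this} scroll, the proper transform of the plane $P$ under $q\colon X\dashrightarrow Z_{T_L}$ is a second $\rho$-plane $\Pi'\subset Z_{T_L}$, disjoint from the $\rho$-plane $\Pi$ coming from $\langle T_L\rangle\cap X$. One then \emph{defines} $X'$ by projecting $Z_{T_L}$ from $\Pi'$. With this definition, ({\bf BE}) and ({\bf FM}) are immediate: $X$ and $X'$ are both birational to the \emph{same} GM fourfold $Z_{T_L}$, and \cite[Theorem~5.8]{KuzPerry} applied to each pair gives $\calA_X\simeq\calA_{Z_{T_L}}\simeq\calA_{X'}$. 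There is no need to match two separately constructed EPW sextics, and no appeal to \Cref{thm: bir GM} at all. The lattice theory is invoked only \emph{afterwards}, and only to show $F\not\cong F'$ (hence $X\not\cong X'$): one identifies the pullback $\pi^*(g')=3g-2p+2\lambda$ of the Pl\"ucker class and checks that no isometry sending $g$ to it acts by $\pm\id$ on the relevant index-two subgroup of $D(\NS(F))$. The birational map $F\dashrightarrow F'$ itself is exhibited as the Mukai flop along the disjoint planes $P^*\cup\bP(V^\vee)$, factoring as $F\dashrightarrow\widetilde W_{T_L}^\perp\xrightarrow{\tau}\widetilde W_{T_L}^\perp\dashrightarrow F'$ with the covering involution in the middle.

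Your route --- produce $X'$ via \Cref{prop: existence of cubic} from a second degree-$6$ class, then try to prove $A=A'$ by arranging a flop factorization that identifies $\widetilde W_T^\perp$ with $\widetilde W_{T'}^\perp$ --- is not incorrect in principle, but the matching step is genuinely hard without the geometric input above: you would need to know which Lagrangian planes to flop, that $P^*$ and $\bP(V^\vee)$ are disjoint, and that on the $X'$ side the analogous construction lands on the \emph{same} double EPW sextic. In \Cref{thm: nonsyz} all of this came from \cite{BFMQ24}; here the paper replaces that input with the single observation that $P$ supplies a second $\rho$-plane in $Z_{T_L}$, which collapses the entire matching problem. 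The rank-$3$ Pell-type enumeration you propose is also unnecessary: once $X'$ is built geometrically, one only needs to locate $\pi^*(g')$ and rule out a birational isometry carrying $g$ to it, which is a finite check.
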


We begin in \Cref{subsec:TP1comp} by giving generalities about the three irreducible components of $\calC_8\cap\calC_{12}$, explaining our restriction to the case where $[P]\cdot[T]=1$. We then make geometric observations about $X$ and $F$ in \Cref{subsec:TP1geometry}, including finding a GM fourfold birational to $X$ and containing a pair of disjoint $\rho$-planes. Projecting from one of the planes, we obtain the cubic fourfold $X'$ appearing in Theorem~\ref{theorem:C8C12}. To prove that $X\not\cong X'$, we finish in \Cref{subsec:TP1lattice} with lattice-theoretic calculations and some analysis of $\Mov(F)$.

\subsection{Restriction to $\calC_{M_1}$}\label{subsec:TP1comp}

Suppose $X$ is a smooth cubic fourfold containing a plane $P$ and a cubic scroll $T$, and suppose further that $A(X)=\left\langle \eta_X, P, T\right\rangle$ where $\eta_X$ is the square of the hyperplane class. The intersection form on $A(X)$ takes the form
\begin{center}$M_a:=$
\begin{tabular}{r|rrr}
& $\eta_X$ & $P$ & $T$  \\ \hline
$\eta_X$ & $3$   & $1$  & $3$ \\
$P$ & $1$   & $3$ & $a$ \\
$T$ & $3$ & $a$ & $7$
\end{tabular}
\end{center}
for $a\in\{1,2,3\}$, after possibly relabeling $T$ and $T^\vee$, where $T^\vee$ is a cubic scroll residual to $T$ in a quadric section of $X\cap\langle T\rangle$ \cite[Theorem 1.1]{C8C12}. We denote by $\calC_{M_a}$ the locus of cubic fourfolds admitting a primitive embedding of $M_a\subset A(X)$. We treat only the case $a=1$ here.

\begin{remark}
   When $a=2$, $T$ gives a rational section to the quadric surface fibration $\Bl_PX\to\bP^2$, making $X$ rational.  In the case $a=3$, the hyperplane $H=\langle T\rangle$ contains $P$, so the cubic threefold $Y=X\cap \langle T\rangle$ also contains $P$, and has more than six nodes (see \cite{MV24} and \cite{CMTZ24} for example). Although this case is still interesting (these cubics are conjecturally irrational), Hassett and Tschinkel's results from \cite{flops} do not provide a description of the Fano variety of lines on $Y$. 
\end{remark}

When $a=1$, if $\delta=m[P]+n[T]$, then the sublattice $K_\delta=\left\langle\eta_X,\delta\right\rangle$ has discriminant $4(2m^2+3n^2)$. This shows, \emph{a fortiori}, that $X\not\in\calC_d$ for any $d$ satisfying $(**)$, so $X$ is conjecturally irrational (see \Cref{subsec:special}). Note also that $X\in \calC_{20}$ (cf.~\cite[Theorem~7.11]{yang2021lattice}). This observation is clarified by Lemma~\ref{lemma:C8C12directrix}, where we show that $X$ contains a cubic scroll intersecting $P$ along its directrix: as discussed in \cite[Proposition 3.4]{C8C12}, the union of these two surfaces is a flat limit of the Veronese surface.

\begin{remark}\label{remark:C8C12degeneration}
    Although $X\in\calC_{20}$, Theorem~\ref{theorem:C8C12} cannot be deduced from Theorem~\ref{theorem:20} or from the results from \cite{FL24}, since $X$ is no longer very general in $\calC_{20}$. Indeed, the situation here differs: unlike for the very general member of $\calC_{20}$, $F$ and $F'$ are merely birational, not isomorphic.
\end{remark}

\subsection{Geometry}\label{subsec:TP1geometry}

Proceeding with the case $[P]\cdot[T]=1$, we record some geometric observations. Let $H=\langle T \rangle$ and $Y=X \cap H$. The example from \cite[Appendix A]{C8C12} shows that, in general, $Y$ has six nodes in general linear position. We will assume this generality condition in what follows. 

Bezout's theorem guarantees that $P$ intersects $H$ along a line $L$ (rather than being contained in $H$).
\begin{lemma}\label{lemma:C8C12directrix}
    There is a unique cubic scroll $T_L$ homologous to $T$ and a unique cubic scroll $T_L^\vee$ homologous to $T^\vee$ such that $L\subset T_L$ and $L\subset T_L^\vee$. Moreover, $L$ is a section of both cubic scrolls.
\end{lemma}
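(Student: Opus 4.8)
The plan is to work on the nodal cubic threefold $Y=X\cap H$, which has six nodes in general linear position, and to combine the description of its cubic scrolls from \cite{hassett-thesis} and \cite{flops} with the Veronese degeneration of \cite[Proposition~3.4]{C8C12}. Relative to those references, the new content is to single out the line $L$ itself, to treat both homology classes $[T]$ and $[T^\vee]$, and to prove uniqueness.

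I would begin with elementary observations. Since $P\not\subset H$ by B\'ezout (as noted above), $L=P\cap H$ is a line, and $L\subset X\cap H=Y$; moreover $L$ is the only line of the plane $P$ contained in $Y$, since a line $\ell\subset P$ lies in $Y$ precisely when $\ell\subset H$, i.e.\ $\ell\subset P\cap H=L$. Because $[P]\cdot[T]=1$, the point $P\cap T$ is a single point, which lies on $P\cap H=L$; thus $L$ meets $T$ in exactly one point, and likewise $L$ meets $T^\vee$ in exactly one point. I would then invoke the structural facts that the cubic scrolls on $X$ homologous to $T$, and those homologous to $T^\vee$ (after the relabeling of $T$ and $T^\vee$ permitted in \Cref{subsec:TP1comp}), all lie on $Y$, span $H$, and vary in two-parameter families --- this is the content of \cite{hassett-thesis} and \cite{flops} --- and that a line on a cubic scroll $S\subset\bP^4$ is either a fiber of the ruling $S\to\bP^1$ or the directrix, the directrix being the only line on $S$ that is a section of $S$. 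Given these, the ``moreover'' clause of the lemma reduces to identifying $L$ with the directrix of $T_L$ and of $T_L^\vee$.

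For existence, I would apply \cite[Proposition~3.4]{C8C12}: since $X\in\calC_{20}$, its Veronese structure degenerates, in two ways, to $P\cup T_L$ with $T_L$ homologous to $T$ and meeting $P$ along its directrix, and to $P\cup T_L^\vee$ with $T_L^\vee$ homologous to $T^\vee$ and meeting $P$ along its directrix. Since $T_L$ is homologous to $T$, it lies on $Y\subset H$, so its directrix $T_L\cap P$ is contained in $P\cap H=L$ and hence equals $L$; the same reasoning gives the directrix of $T_L^\vee$, and $L\subset T_L$ and $L\subset T_L^\vee$ follow. It remains to prove uniqueness --- that $L$ lies on exactly one cubic scroll in each of the two families. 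For this I would compare, inside the (suitably resolved) Fano surface of lines of $Y$, the two-dimensional locus swept out by directrices of the scrolls in a given family with the locus swept out by their ruling lines, showing that $L$ meets the first in a single point and avoids the second. Proving this genericity statement for $L$, which is special rather than a general line of $Y$, is where I expect the main difficulty to lie; I would establish it by a parameter count on the incidence of scrolls with lines, using that $X$ is very general in $\calC_{M_1}$ together with the facts already recorded that $L$ is the unique line of $P$ on $Y$ and that $L$ meets each of $T$ and $T^\vee$ only once.
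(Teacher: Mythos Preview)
Your existence argument is circular. You invoke \cite[Proposition~3.4]{C8C12} to produce a scroll $T_L$ meeting $P$ along its directrix, but that proposition says only that such a union $P\cup T_L$ is a flat limit of Veronese surfaces; it does not manufacture a scroll inside the given $X$. In fact the paper uses the implication in the opposite direction: Lemma~\ref{lemma:C8C12directrix} is what \emph{produces} $T_L$, and then \cite[Proposition~3.4]{C8C12} is cited afterward to explain why the class $[P]+[T_L]$ witnesses $X\in\calC_{20}$. So your appeal to the Veronese degeneration for existence assumes what is to be proved.

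Your uniqueness argument is, as you acknowledge, only a sketch; the proposed parameter count on ``incidence of scrolls with lines'' would have to rule out $L$ being a ruling line of some scroll in either family, and nothing you have recorded (e.g.\ that $L$ meets the particular scroll $T$ once) controls how $L$ sits in an \emph{arbitrary} scroll homologous to $T$. The paper handles both existence and uniqueness in one stroke via the structure of the Fano surface of lines on the six-nodal cubic threefold $Y$. By \cite[Proposition~4.1]{flops} one has
\[
F(Y)=\bP(V)\cup S'\cup\bP(V^\vee),
\]
where points of $\bP(V)$ and $\bP(V^\vee)$ parametrize ruling lines of scrolls in the two families, and each point of $S'$ parametrizes a line that is the directrix of \emph{exactly one} scroll from each family. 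Since $L$ misses the nodes of $Y$ (if a node $y$ lay on $P$ then $P\subset T_yY=H$), $[L]$ is a smooth point of $F(Y)$ and hence lies on a single component. The component is identified by an intersection-number calculation: if $L$ were a ruling line of some $T'\sim T$, then $[P]\cdot[T]=[P]\cdot[T']=-\deg(N_{L/T'})=0$, contradicting $[P]\cdot[T]=1$. Thus $[L]\in S'$, and the description of $S'$ gives existence, uniqueness, and the ``section'' clause simultaneously. This is the missing ingredient in your approach.
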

\begin{proof}
     First, we note that $P$ passes through none of the nodes of $Y$. Indeed, if $y$ is a node of $Y$, and $y\in P$, then we would have $P\subset T_yY=H$. 
     
     Since $L$ passes through none of the nodes of $Y$, $[L]$ is a smooth point of $F(Y)$. In particular, $[L]$ belongs to at most one irreducible component. By \cite[Proposition 4.1]{flops}, there is a decomposition
    \[
    F(Y)=\bP(V)\cup S'\cup\bP(V^\vee),
    \]
    where a point $[\ell]\in \bP(V)$ (respectively $\bP(V^\vee)$) parametrizes a line in the ruling of some cubic scroll homologous to $T$ (respectively $T^\vee$), and a point $[\ell]\in S'$ parametrizes a line that is the directrix of exactly two cubic scrolls, one homologous to $T$ and the other homologous to $T^\vee$. We will show $[L]\in S'$ by checking that $[P]\cdot[T]=0$ if $L$ is a fiber of a scroll homologous to $T$.
     Indeed, in that case, we would have 
    \[
    [P]\cdot[T]=[P]\cdot[T']=-\deg(N_{L/T'})=0,
    \]
    following from the calculation in \cite[Section 2.4]{C8C12}.
\end{proof}

As a consequence of the above, we find disjoint pairs of planes in the Fano variety $F$ of lines on $X$.

\begin{corollary}\label{corollary:C8C12disjointplanes}
    The plane $P^*\subset F$ dual to $P$ is disjoint from $\bP(V)$ and $\bP(V^\vee)$.
\end{corollary}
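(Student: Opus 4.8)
The plan is to reduce the statement to a fact about the Fano variety $F(Y)$ of the cubic threefold $Y=X\cap H$ (with $H=\langle T\rangle$), exploiting that $\bP(V)$ and $\bP(V^\vee)$ are by construction subvarieties of $F(Y)\subset F$, together with the observation recorded just before Lemma~\ref{lemma:C8C12directrix} that Bezout forces $P\cap H=L$ to be a single line rather than $P\subset H$.

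First I would note that any point of $P^*\cap F(Y)$ is a line $\ell$ with $\ell\subset P$ and $\ell\subset H$; since $P\cap H=L$, necessarily $\ell=L$, so $P^*\cap F(Y)\subseteq\{[L]\}$, and in particular $P^*\cap(\bP(V)\cup\bP(V^\vee))\subseteq\{[L]\}$. It then remains only to check $[L]\notin\bP(V)\cup\bP(V^\vee)$. From the proof of Lemma~\ref{lemma:C8C12directrix}, $L$ passes through none of the six nodes of $Y$, so $[L]$ is a smooth point of $F(Y)$, and a smooth point lies on a unique irreducible component. Also from that proof, $[L]$ lies on the component $S'$ of the decomposition $F(Y)=\bP(V)\cup S'\cup\bP(V^\vee)$ of \cite[Proposition 4.1]{flops}; since the generic member of $S'$ is a directrix of a pair of cubic scrolls rather than a ruling line, $S'$ is distinct from $\bP(V)$ and $\bP(V^\vee)$. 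Hence the unique component of $F(Y)$ through $[L]$ is $S'$, so $[L]\notin\bP(V)\cup\bP(V^\vee)$, and $P^*\cap(\bP(V)\cup\bP(V^\vee))=\emptyset$.

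If one prefers to avoid the component-level bookkeeping, there is a direct alternative: a point of $P^*\cap\bP(V)$ would be a line $\ell\subset P$ belonging to the ruling of some cubic scroll $T'$ homologous to $T$, so $\ell\subset P\cap T'$; as $\langle P\rangle$ and $\langle T'\rangle$ span a $\bP^2$ and a $\bP^4$ in $\bP^5$ meeting along (at least) $\ell$, in our generic situation $P\cap T'=\ell$ and the excess intersection formula gives $[P]\cdot[T']=-\deg N_{\ell/T'}=0$, a fiber of the ruled surface $T'$ having trivial normal bundle in $T'$ — contradicting $[P]\cdot[T']=[P]\cdot[T]=1$. The identical computation rules out $P^*\cap\bP(V^\vee)$ once one notes $[T]+[T^\vee]=2\eta_X$, whence $[P]\cdot[T^\vee]=2(\eta_X\cdot P)-[P]\cdot[T]=1$. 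I do not expect a genuine obstacle here: the only point needing a little care is the claim that $[L]$ meets only the component $S'$ (equivalently, that the excess contribution in the alternative argument is concentrated along $\ell$), and both are already secured by \cite[Proposition 4.1]{flops} and the smoothness of $F(Y)$ at $[L]$ established in the proof of Lemma~\ref{lemma:C8C12directrix}.
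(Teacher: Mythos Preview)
Your argument is correct and follows essentially the same route as the paper: the paper's proof simply observes that a point of $P^*\cap(\bP(V)\cup\bP(V^\vee))$ would force $L$ to be a ruling line of some cubic scroll in $Y$, and then invokes Lemma~\ref{lemma:C8C12directrix} to rule this out. You have spelled out the reduction $P^*\cap F(Y)\subseteq\{[L]\}$ and the smoothness-at-$[L]$ step more explicitly, and your alternative excess-intersection computation is exactly the one carried out inside the proof of Lemma~\ref{lemma:C8C12directrix}.
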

\begin{proof}
    The only way $P^*$ could intersect either $\bP(V)$ or $\bP(V^\vee)$ is if $L$ is a line in the ruling of some cubic scroll in $X$. By Lemma~\ref{lemma:C8C12directrix}, this is not the case.
\end{proof}

We also find a pair of $\rho$-planes in two GM fourfolds associated to $X$.

\begin{proposition}\label{proposition:TP1 two planes}
    Let $T_L$ and $T_L^\vee$ be the cubic scrolls in $X$ whose directrices lie on $P$. Let $Z_{T_L}$ and $Z_{T_L^\vee}$ be the GM fourfolds associated to the pairs $(X,T_L)$ and $(X,T_L^\vee)$, respectively. Then $Z_{T_L}$ and $Z_{T^\vee_L}$ each contain a pair of disjoint $\rho$-planes.
\end{proposition}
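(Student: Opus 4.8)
The plan is to produce both $\rho$-planes in $Z_{T_L}$ explicitly using the birational map $q = q_{T_L}\colon X\dashrightarrow Z_{T_L}\subset\Gr(2,V_5)$ of \Cref{subsec: GM4fold}, which is cut out by the system $|\calI_{T_L}(2)|$ of quadrics through $T_L$; the case of $Z_{T_L^\vee}$ is then identical, by the symmetry of \Cref{lemma:C8C12directrix} in $T_L$ and $T_L^\vee$.

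The first plane is the one built into the construction: by \cite[Section 3.3]{BFMQ24}, the proper transform of $Y_L := X\cap\langle T_L\rangle$ in $\Bl_{T_L}X$ is a small resolution $Y_L^+$ on which $q$ restricts to a $\bP^1$-bundle over a plane $\Pi_1\subset Z_{T_L}$, and $\Pi_1$ is the $\rho$-plane from which the inverse projection $f\colon Z_{T_L}\dashrightarrow X$ is taken. For the second, I set $\Pi_2 := \overline{q(P)}$ and argue as follows. By Bezout's theorem and \Cref{lemma:C8C12directrix} we have $P\cap\langle T_L\rangle = L\subset T_L$, so the base locus $T_L$ of $q$ meets $P$ exactly along $L$; hence every quadric through $T_L$ restricts on $P\cong\bP^2$ to a conic containing $L$, and $q|_P$ is cut out by a subsystem of the complete system $|\calO_P(2)\otimes\calI_L|$ of such conics. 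A dimension count --- equivalently $h^0(\bP^5,\calI_{T_L\cup P}(2)) = 6$, which is where generality of $X$ in $\calC_{M_1}$ enters --- shows this subsystem is the full system; since the conics through $L$ define a birational map $P\dashrightarrow\bP^2$, the image $\Pi_2$ is a plane, and it lies in $Z_{T_L}\subset\Gr(2,V_5)$ because $q$ maps $X$ into the Grassmannian. Any plane in $\Gr(2,V_5)$ is a $\rho$-plane or a $\sigma$-plane, according as the lines it parametrizes in $\bP(V_5)$ are coplanar or concurrent; I would show $\Pi_2$ is of $\rho$-type either by exhibiting the subspace $V_3\subset V_5$ with $\Pi_2 = \bP(\wedge^2 V_3)$, or, more conceptually, by checking that linear projection of $Z_{T_L}$ away from $\Pi_2$ is again a smooth cubic fourfold, as in the construction of \cite{DIM} and \cite{KuzPerry} --- this is the cubic fourfold $X'$ of \Cref{theorem:C8C12}.

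The remaining point, and the main obstacle, is disjointness $\Pi_1\cap\Pi_2 = \emptyset$. Since $q$ is an isomorphism away from its base locus $T_L$ and the contracted divisor $Y_L$, and $P\setminus L$ avoids both (as $P\cap Y_L = P\cap\langle T_L\rangle = L = P\cap T_L$), the intersection $\Pi_1\cap\Pi_2$ can only be supported on the $q$-image of $L$ --- at worst a line in $\Pi_2$. Showing this line misses $\Pi_1 = \overline{q(Y_L)}$ comes down to a local analysis over $L\subset T_L$ on a common resolution of $q$, built from $\Bl_{T_L}X$ by the small modification producing $Y_L^+$: one wants the proper transforms of $P$ and of $Y_L$ to be disjoint there, e.g.\ because the normal direction of $P$ along $L$ and that of $Y_L$ along $T_L$ determine distinct points of the exceptional divisor over each point of $L$ (note $L$ meets none of the six nodes of $Y_L$, by \Cref{lemma:C8C12directrix}), with the small modification along the node-curves not reintroducing an intersection. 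On the conceptual side, this disjointness should be precisely \Cref{corollary:C8C12disjointplanes}: under the birational equivalence $F\bir\widetilde{W}_{T_L}^\perp$ of \Cref{thm: EPW for one scroll} and the standard bijection between $\rho$-planes of a GM fourfold and Lagrangian planes of its double EPW sextic, $\Pi_1$ and $\Pi_2$ match up with two of the pairwise-disjoint planes $\bP(V)$, $\bP(V^\vee)$, $P^*$ in $F$; carrying this correspondence out compatibly with the birational identifications is the alternative route, and either way the real work lies in this final step.
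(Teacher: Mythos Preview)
Your two planes $\Pi_1$ and $\Pi_2$ are exactly the paper's $\Pi$ and $\Pi'$, so the setup matches. The gaps are the two steps you yourself flag as unfinished, and in both cases the paper has a one-line argument that bypasses the machinery you propose.

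For disjointness, you are working much too hard. Recall that the rational inverse $f\colon Z_{T_L}\dashrightarrow X$ is \emph{linear projection from $\Pi_1$}. Since $q|_P$ is birational onto $\Pi_2$, the projection $f$ must send $\Pi_2$ back to the plane $P\subset X$. But projection of a plane $\Pi_2\subset\bP^8$ from a plane $\Pi_1$ lands in the linear span of $\Pi_2$ modulo $\Pi_1$; this image is a plane only when $\Pi_1\cap\Pi_2=\emptyset$. No local analysis on a resolution, and no passage through \Cref{corollary:C8C12disjointplanes}, is needed. (Incidentally, your argument that $\Pi_2$ is a plane via $h^0(\calI_{T_L\cup P}(2))=6$ is correct but also heavier than necessary: since $P\cap\langle T_L\rangle=L$, we have $P\setminus L\subset X\setminus Y_L$, on which $q$ is an open embedding; so $q|_P$ is birational, and the image of $\bP^2$ under a birational morphism is $\bP^2$ by minimality.)

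For the $\rho$-type of $\Pi_2$, the paper again avoids geometry. The birational equivalence $X\dashrightarrow Z_{T_L}$ induces a Hodge isometry $T(X)\cong T(Z_{T_L})$, so $\mathrm{disc}\,T(Z_{T_L})=\mathrm{disc}\,M_1=32$. But by \cite[Proposition~6.2]{DIM}, a GM fourfold whose algebraic lattice contains a $\sigma$-plane class disjoint from the given $\rho$-plane $\Pi_1$ would have transcendental discriminant $29$; contradiction. Your suggestion to verify $\rho$-type by projecting from $\Pi_2$ and landing on a smooth cubic would also work (this is indeed how $X'$ arises), but it requires checking smoothness of the target, whereas the discriminant argument is immediate.
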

\begin{proof}
    Recall from \Cref{subsec: GM4fold} that the complete linear system of quadrics containing $T_L$ induces a map $q\colon\Bl_{T_L}X\to Z_{T_L}$. The proper transform of $Y$ is a plane $\Pi\subset Z_{T_L}$, and projection from $\Pi$ gives a rational inverse to $q$. Since $P$ intersects $T_L$ along a line, the proper transform of $P$ in $\Bl_{T_L}X$ is again a plane. Away from $Y$, the rational map $q\colon X\dashrightarrow Z_{T_L}$ is an open embedding, so $q|_P$ is a birational equivalence. Since $P$ is a minimal surface, $q(P)$ is a plane, which we call $\Pi'$.

    We now argue that $\Pi$ and $\Pi'$ are disjoint. Indeed, projection from $\Pi$ yields a rational inverse to $q$. The only way for $\Pi'$ to project to a plane is if $\Pi'\subset\Pi^\perp$, i.e. $\Pi\cap\Pi'=\emptyset$.
    Note that we have a Hodge-isometry between transcendental lattices $T(X)\cong T(Z_{T_L})$: in particular, $\textrm{disc}(T(Z_{T_L}))=32.$ If $\Pi'$ is not a $\rho$-plane, then by \cite[Proposition 6.2]{DIM} we have $\textrm{disc}(T(X_{T_L}))=29,$ a contradiction. 

    The argument for $Z_{T_L^\vee}$ is identical.
\end{proof}

\begin{definition}\label{defi:X'}
    We define $X'$ to be the cubic fourfold obtained as follows: let $\Pi'$ be the proper transform of $P$ in $Z_{T_L}$, which is a $\rho$-plane. Projecting $Z_{T_L}$ from $\Pi'$ yields a morphism $\Bl_{\Pi'}Z_{T_L}\to X'$ whose image is another cubic fourfold. 
\end{definition}

Moreover, $X'$ contains a plane $P'$ (the image of $\Pi$ under the linear projection) and a cubic scroll $T'$, such that the complete linear system of quadrics containing $T'$ induces a map $\Bl_{T'}X'\to Z_{T_L}$, and $\Pi$ is the image of the hyperplane section $X'\cap\langle T'\rangle$.

\begin{remark}
    The rational map $X\dashrightarrow X'$ can also be realized via the complete linear system of quadrics containing $T_L\cup P$, which is a degeneration of the Veronese surface, as mention in Remark~\ref{remark:C8C12degeneration}. Hence our construction matches the Cremona transformation from \cite{FL24}. However, we rely on the intermediate maps $X\dashrightarrow Z_{T_L}$ and $Z_{T_L}\dashrightarrow X'$ to obtain a derived equivalence between $X$ and $X'$.
\end{remark}

\begin{proposition}\label{proposition:TP1 relating cubics}
    Let $X$ and $X'$ be as in \Cref{defi:X'}, and $F$ and $F'$ their Fano varieties of lines. Then $F'$ is isomorphic to the Mukai flop of $F$ along the disjoint pair of Lagrangian planes $P^*$ and $\bP(V^\vee)$.
\end{proposition}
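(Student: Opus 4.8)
The plan is to exploit the explicit birational construction of $X'$ (Definition~\ref{defi:X'}) together with the known behavior of the Fano variety under the two elementary operations involved: projecting a GM fourfold from a $\rho$-plane, and projecting a cubic fourfold from a cubic scroll. First I would recall, as in the proof of Theorem~\ref{thm: nonsyz} and the discussion in Section~\ref{subsec: GM4fold}, that the passage $X \dashrightarrow Z_{T_L} \dashrightarrow X'$ is, on the level of Fano varieties (equivalently, double EPW sextics / hyperk\"ahler models), a composition of two Mukai flops in disjoint Lagrangian planes. Concretely, the map $q\colon \Bl_{T_L}X \to Z_{T_L}$ induces a birational map $F \dashrightarrow \widetilde{W}_{T_L}^\perp$ which is a Mukai flop along the Lagrangian plane $P^*$ dual to $P$ (this is where Corollary~\ref{corollary:C8C12disjointplanes} enters: $P^*$ is genuinely disjoint from $\bP(V)$ and $\bP(V^\vee)$, so the flop is well-defined and the plane being flopped is identified with the locus of lines meeting the blown-up scroll); and then projecting $Z_{T_L}$ from the second $\rho$-plane $\Pi'$ induces a second Mukai flop $\widetilde{W}_{T_L}^\perp \dashrightarrow F'$. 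I would then argue that the composite $F \dashrightarrow F'$ is a single Mukai flop along a disjoint pair of Lagrangian planes, and identify that pair as $P^*$ and $\bP(V^\vee)$.

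The key steps, in order, are: (i) recall from Theorem~\ref{thm: EPW for one scroll} and \cite[Section 3.3]{BFMQ24} that $q|_{Y^+}$ is a $\bP^1$-bundle over a plane, so that the birational map $F \dashrightarrow \widetilde{W}_{T_L}^\perp$ contracts exactly the plane $P^*$ (the lines in $X$ lying in the hyperplane section $Y = X\cap\langle T_L\rangle$ through the blown-up scroll) and inserts the dual plane $\Pi$; (ii) run the symmetric argument for the projection $Z_{T_L}\dashrightarrow X'$ from $\Pi'$, which on Fano varieties contracts the plane dual to $\Pi'$ and inserts a plane $P'^*\subset F'$, the plane dual to the plane $P'\subset X'$ from Definition~\ref{defi:X'}; (iii) check that the plane $\Pi$ inserted by the first flop is \emph{not} the one contracted by the second — i.e. $\Pi$ and $\Pi'$ are the disjoint pair of $\rho$-planes of Proposition~\ref{proposition:TP1 two planes}, so the two flops act on independent Lagrangian planes and their composition is a single Mukai flop in two disjoint planes; (iv) track which two planes of $F$ survive as the center of the composite: one is $P^*$ (untouched by the construction, since the first flop only affects $P^*$... here I must be careful — $P^*$ \emph{is} contracted by the first flop, so let me instead say the composite flop is along $P^*$ and along the preimage in $F$ of the plane contracted by the second flop), and Lemma~\ref{lemma:C8C12directrix} together with Corollary~\ref{corollary:C8C12disjointplanes} identifies this second plane as $\bP(V^\vee)$, using the decomposition $F(Y) = \bP(V)\cup S'\cup \bP(V^\vee)$ and the fact that $Z_{T_L}$ was built from $T_L$ (homologous to $T$) rather than $T_L^\vee$.

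The cleanest way to nail down step (iv) is lattice-theoretic rather than purely geometric: on $\NS(F)$ (or $H^2(F,\bZ)$) the composite birational map acts by a known monodromy operator, and by Theorem~\ref{thm:torelli} and Theorem~\ref{thm: Mov and Amp} a Mukai flop along a disjoint union of Lagrangian planes is the product of the corresponding $\calW_{\pex}$-reflections; matching the $(-2)$-classes fixed by this composite against the classes of $P^*$, $\bP(V)$, $\bP(V^\vee)$ (computable via the Abel-Jacobi images of $P$, $T_L$, $T_L^\vee$, $V$ and the intersection form $M_1$) pins down the pair as $\{P^*, \bP(V^\vee)\}$. I expect the main obstacle to be bookkeeping the two $\rho$-planes across the GM fourfold $Z_{T_L}$: one must verify that the plane $\Pi'$ from which we project to reach $X'$ is disjoint from the plane $\Pi$ that gives back $X$ (done in Proposition~\ref{proposition:TP1 two planes}), and then verify that \emph{on the Fano side} the flop inserting a plane dual to $\Pi$ and the flop contracting a plane dual to $\Pi'$ do not interfere — equivalently that the corresponding $(-2)$-classes in $\NS(F')$ (or in $\NS$ of the common flop) are orthogonal — so that the two-step birational map is literally a Mukai flop in two disjoint planes and not something more complicated; this is precisely the transversality encoded in $[P]\cdot[T]=1$, i.e. the choice of the component $\calC_{M_1}$.
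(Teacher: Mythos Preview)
Your proposal has the right high-level shape (factor $F\dashrightarrow F'$ through the double EPW sextic and identify the indeterminacy locus), but there is a genuine error in step (i) that propagates through the rest.

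The flop $F\dashrightarrow\widetilde{W}_{T_L}^\perp$ from \cite{BFMQ24} is \emph{not} along $P^*$. That construction exists for any smooth cubic in $\calC_{12}$ (with the nodal generality assumption on $Y$), independently of whether $X$ contains a plane, and the Lagrangian plane it flops is one of $\bP(V)$ or $\bP(V^\vee)$ --- concretely, $\bP(V^\vee)$ when we start from $T_L$. Your parenthetical identifying $P^*$ with ``the lines in $X$ lying in the hyperplane section $Y$'' is wrong: $P^*$ parametrizes lines in the plane $P$, while the lines in $Y$ make up $F(Y)=\bP(V)\cup S'\cup\bP(V^\vee)$, which is a different object entirely. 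So your steps (i)--(iv) have the two planes swapped, and the confusion you flag in (iv) (``here I must be careful'') is a symptom of this, not a minor bookkeeping issue.

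Second, the factorization the paper uses is $F\dashrightarrow\widetilde{W}_{T_L}^\perp\xrightarrow{\tau}\widetilde{W}_{T_L}^\perp\dashrightarrow F'$, with the covering involution $\tau$ inserted in the middle; both $X$ and $X'$ project from the \emph{same} GM fourfold $Z_{T_L}$, so the two maps to $\widetilde{W}_{T_L}^\perp$ do not compose naively. The paper then shows the composite is indeterminate exactly on $P^*\cup\bP(V^\vee)$ by a direct geometric argument: the maps $f,f'\colon\widetilde{W}_{T_L}^\perp\dashrightarrow F,F'$ are defined at $w$ precisely when the singular quadric threefold $Q_w$ misses $\Pi$, respectively $\Pi'$, and since $\Pi,\Pi'$ are disjoint $\rho$-planes, no $Q_w$ contains both. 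This is the key step, and it is not visible from your decomposition.

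Finally, your fallback lattice argument in (iv) has two errors: Lagrangian planes in a fourfold of $\mathrm{K3}^{[2]}$-type correspond to classes in $\calW_{\flop}$ (square $-10$, divisibility $2$), not $\calW_{\pex}$; and a Mukai flop, being an isomorphism in codimension one, induces the \emph{identity} on $H^2$ under the natural identification, not a reflection. So you cannot read off the flopped planes from the action of $\pi^*$ on $\NS(F)$ in the way you describe. The paper does use lattice computations, but only \emph{after} this proposition, to locate the Pl\"ucker class and show $F\not\cong F'$.
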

\begin{proof}
 If $[\ell]\not\in \bP(V^\vee)\cup P^*$, then
    \begin{itemize}
        \item the total transform of $\ell$  under the birational map $X\dashrightarrow Z_{T_L}$ is a conic $C_\ell$ meeting $\Pi$ in a point (here we use that $[\ell]\not\in\bP(V^\vee)$);
        \item since $[\ell]\not\in P^*$, $C_\ell\not\subset\Pi'$, so the total transform of $C_\ell$ under the projection $Z_{T_L}\dashrightarrow X'$ is a conic $D_\ell$.
    \end{itemize}
        Let $\ell'$ be the line residual to $D_\ell$ in $X'$. Moreover, let $C_{\ell'}$ be the total transform of $\ell'$ under the birational map $X'\dashrightarrow Z_{T_L}$, and let $Q_\ell$ and $Q_{\ell'}$ be the singular quadric threefolds associated to the conics $C_\ell$ and $C_{\ell'}$ as in \cite[Lemma 2.22]{BFMQ24}.
        
        We first show that the formula $\pi([\ell])=[\ell']$ defines the same birational equivalence $\pi:F\dashrightarrow F'$ as the composition of birational equivalences 
        \[
        F\dashrightarrow\widetilde{W}_{T_L}^\perp\overset{\tau}\to\widetilde{W}_{T_L}^\perp\dashrightarrow F'
        \]
        defined in \cite[Proposition 3.6]{BFMQ24}, where $\tau$ is the covering involution on $\widetilde{W}_{T_L}^\perp$. This amounts to checking that $Q_\ell=Q_{\ell'}$ and that $C_\ell$ and $C_{\ell'}$ span planes in opposite rulings of this quadric surface. Indeed, conics in the same ruling of a singular quadric threefold, or conics in two distinct singular quadric threefolds parametrized by $\widetilde{W}_{T_L}^\perp$ meet in at most one point, whereas $C_\ell$ and $C_{\ell'}$ meet in two points, corresponding to the intersection of $D_\ell$ and $\ell'$.
        
        Already, we defined $\pi$ away from $\bP(V^\vee)\cup P^*$. To see that $\pi$ is indeterminate along $\bP(V^\vee)\cup P^*$, it suffices to check that the birational maps $f\colon\widetilde{W}_{T_L}^\perp\dashrightarrow F$ and $f'\colon\widetilde{W}_{T_L}^\perp\overset{\tau}\to\widetilde{W}_{T_L}^\perp\dashrightarrow F'$ have different base loci. Following~\cite[Lemma 3.5 and Proposition 3.6]{BFMQ24}, a point $w\in\widetilde{W}_{T_L}^\perp$ parametrizes a singular quadric threefold $Q_w$, and $f$ (respectively $f'$) is defined at $w$ if and only if $Q_w\cap\Pi=\emptyset$ (respectively $Q_w\cap \Pi'=\emptyset$). A singular quadric threefold cannot contain a disjoint pair of planes, so the base loci of $f$ and $f'$ disagree, as needed.
\end{proof}

\begin{remark}
    Note that one can also obtain another cubic fourfold $X''$ by projecting $Z_{T_L^\vee}$ from the proper transform of $P$. Although we do not include the proof here, $X'$ and $X''$ are isomorphic.
\end{remark}

\subsection{Lattice theory and the movable cone}\label{subsec:TP1lattice}

Let $F$ be the Fano variety of lines on $X$. The BBF form on $\NS(F)$ is given by: 
\begin{center}
\begin{tabular}{r|rrr}
& $g$ & $p$ & $\lambda$  \\ \hline
$g$ & $6$   & $2$  & $0$ \\
$p$ & $2$   & $-2$ & $0$ \\
$\lambda$ & $0$ & $0$ & $-4$
\end{tabular}
\end{center}
where $g$, $p$, and $\lambda$ are the images under the Abel-Jacobi map of $\eta_X$, $[P]$, and $[T]-\eta_X$, respectively. The isometry group $\Orth(\NS(F))$ is generated by
$\pm1$ and the reflections
\[G_1=\begin{pmatrix} 5 & 2 & -4 \\ 0 & -1 & 0 \\ 6 & 2 & -5 \end{pmatrix},\hspace{0.2cm}
G_2=\begin{pmatrix} 3 & 2 & -2 \\ -2 & -1 & 2 \\ 2 & 2 & -1 \end{pmatrix},
\]
\[G_3=\begin{pmatrix}  1 & 0 & 0 \\ 0 & 1 & 0 \\ 0 & 0 & -1 \end{pmatrix},\hspace{0.2cm}
 G_4=\begin{pmatrix} 1 & 0 & 0 \\ 2 & -1 & 0 \\ 0 & 0 & 1 \end{pmatrix}.\]

The discriminant group is:
\[
D(\NS(F))=\left\langle\frac g2\right\rangle \oplus \left\langle \frac{3g-p}8 \right\rangle \oplus \left\langle \frac\lambda4\right\rangle.
\]

\begin{lemma}\label{lemma:C8C12discgroup}
    An isometry of $\NS(F)$ is a monodromy operator if and only if it acts by $\pm\id$ on the subgroup 
    \[
    H= \left\langle \frac{3g-p}8 \right\rangle \oplus \left\langle \frac\lambda4\right\rangle
    \]
    In particular, an isometry is induced by a birational automorphism if and only if it preserves $\Mov(F)$ and acts by $\pm\id$ on $H$.
\end{lemma}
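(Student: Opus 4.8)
The plan is to transcribe the proof of Lemma~\ref{lemma:20discgroupaction}, keeping track of one new feature: here $\NS(F)$ represents $-2$ (for instance $p^{2}=-2$), so $\Mov(F)$ is strictly contained in the positive cone $\mathrm{Pos}(F)$, and the condition characterizing birational automorphisms must therefore be phrased in terms of $\Mov(F)$ rather than $\mathrm{Pos}(F)$. Two lattice-theoretic inputs are needed. First, since $X$ is very general in the positive-dimensional family $\calC_{M_{1}}$, the algebraic lattice is exactly $A(X)=M_{1}$ and the transcendental Hodge structure $T(X)=M_{1}^{\perp}\subset H^{4}(X,\bZ)$ has $\mathrm{End}_{Hdg}(T(X)_{\bQ})=\bQ$; hence the only Hodge isometries of $T(X)$, and therefore of $T(F)\cong T(X)(-1)$, are $\pm\id$. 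Second, the overlattice $H^{2}(F,\bZ)\supset \NS(F)\oplus T(F)$ is glued along an anti-isometry $\gamma\colon H_{\NS}\xrightarrow{\ \sim\ }H_{T}$ between subgroups $H_{\NS}\subseteq D(\NS(F))$ and $H_{T}\subseteq D(T(F))$, and the heart of the argument is to show that $H_{\NS}$ is exactly the subgroup $H$ in the statement.

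To identify $H_{\NS}$, I would use that the Pl\"ucker class $g$ has divisibility $2$ in $H^{2}(F,\bZ)$. Since $g\in\NS(F)$, the element $g/2\in H^{2}(F,\bZ)^{\vee}$ lies in $\NS(F)\otimes\bQ$ (it has zero component along $T(F)\otimes\bQ$); pairing $g/2$ integrally against $H^{2}(F,\bZ)$ then forces $\overline{g/2}\in D(\NS(F))$ to be orthogonal to $H_{\NS}$ under the discriminant bilinear form. Now $\overline{g/2}\neq 0$ because $(g/2)^{2}=3/2\notin\bZ$ (so $g/2\notin H^{2}(F,\bZ)$), and since $\det M_{1}=32$ one has $\lvert D(T(F))\rvert=32$ while $\lvert D(H^{2}(F,\bZ))\rvert=2$, whence $\lvert H_{\NS}\rvert^{2}=\lvert D(\NS(F))\rvert\cdot\lvert D(T(F))\rvert/\lvert D(H^{2}(F,\bZ))\rvert=1024$ and $\lvert H_{\NS}\rvert=32$. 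Therefore $\lvert H_{\NS}^{\perp}\rvert=64/32=2=\lvert\langle\overline{g/2}\rangle\rvert$, forcing $H_{\NS}^{\perp}=\langle\overline{g/2}\rangle$ and $H_{\NS}=\langle\overline{g/2}\rangle^{\perp}$; a direct computation with the discriminant form then shows $\langle\overline{g/2}\rangle^{\perp}=\langle (3g-p)/8\rangle\oplus\langle\lambda/4\rangle=H$ (so among the seven index-$2$ subgroups of $D(\NS(F))\cong\bZ/2\bZ\oplus\bZ/8\bZ\oplus\bZ/4\bZ$, the glue subgroup is this particular one). This is also consistent with the group structure, since $D(T(F))\cong D(M_{1})\cong\bZ/8\bZ\oplus\bZ/4\bZ$.

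With these inputs in place the main equivalence is routine. For the forward direction: if $\varphi\in\Orth(\NS(F))$ is the restriction of some $\tilde\varphi\in\Mon^{2}_{Hdg}(F)$, then $\tilde\varphi$ restricts to a Hodge isometry of $T(F)$, hence acts there by $+\id$ or $-\id$; compatibility of $\tilde\varphi$ with the gluing then forces $\varphi$ to act on $H=H_{\NS}$ by the same sign. (Such a $\varphi$ automatically preserves $\mathrm{Pos}(F)$: monodromy operators preserve the orientation of the positive $3$-space of $H^{2}(F,\bZ)$, which splits as the positive line in $\NS(F)_{\bR}$ together with the positive $2$-plane in $T(F)_{\bR}$, and $\pm\id$ preserves the orientation of the latter.) Conversely, if $\varphi$ preserves $\mathrm{Pos}(F)$ and acts on $H$ by $\pm\id$, then $\varphi$ and $\pm\id_{T(F)}$ agree on the glue, so they patch to an isometry $\tilde\varphi$ of $H^{2}(F,\bZ)$; this $\tilde\varphi$ is a Hodge isometry (it preserves $\NS(F)$ and acts by a scalar on $T(F)\supseteq H^{3,1}(F)$), it is orientation-preserving (on the $\NS(F)$-part because $\varphi$ preserves $\mathrm{Pos}(F)$, on the $T(F)$-part because $\det(\pm\id)=+1$ in rank $2$), and it acts trivially on $D(H^{2}(F,\bZ))\cong\bZ/2\bZ$; by Markman's description of $\Mon^{2}$ for manifolds of K3$^{[2]}$-type---where $\Mon^{2}(F)$ coincides with the group of orientation-preserving isometries because $D(H^{2}(F,\bZ))\cong\bZ/2\bZ$ \cite{markman}---we conclude $\tilde\varphi\in\Mon^{2}_{Hdg}(F)$, so $\varphi$ lies in the image of $\Mon^{2}_{Hdg}(F)\to\Orth(\NS(F))$. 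The ``in particular'' clause is then formal: if $\varphi$ also preserves $\Mov(F)$, then since $\Mov(F)\subseteq\mathrm{Pos}(F)$ the first part provides a lift $\tilde\varphi\in\Mon^{2}_{Hdg}(F)$ that preserves $\Mov(F)$, so $\tilde\varphi=f^{*}$ for some $f\in\Bir(F)$ by Theorem~\ref{thm:torelli}; conversely any $f^{*}$ with $f\in\Bir(F)$ lies in $\Mon^{2}_{Hdg}(F)=W_{Exc}\rtimes W_{Bir}$ and preserves $\Mov(F)$.

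The step I expect to be the main obstacle is the identification $H_{\NS}=H$: one must bookkeep carefully among $D(\NS(F))$, $D(T(F))$ and $D(H^{2}(F,\bZ))$, and match not merely group structures but the discriminant quadratic forms, in order to single out the correct index-$2$ subgroup. The divisibility-$2$ argument sketched above is the cleanest route (and the relevant forms can be cross-checked in Magma, as with the other lattice computations in this section); everything else is a transcription of Lemma~\ref{lemma:20discgroupaction} with $\mathrm{Pos}(F)$ replaced by $\Mov(F)$ where that distinction is needed.
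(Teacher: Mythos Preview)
Your proposal is correct and follows the same approach as the paper, which simply writes ``The argument is the same as in Lemma~\ref{lemma:20discgroupaction}.'' You have carefully transcribed that argument, making the necessary adjustment that $\NS(F)$ now represents $-2$ so that $\Mov(F)\subsetneq\mathrm{Pos}(F)$, and you supply considerably more detail than the paper does in either lemma---in particular, your divisibility-of-$g$ argument pinning down the glue subgroup $H_{\NS}=\langle\overline{g/2}\rangle^{\perp}$ among the index-$2$ subgroups of $D(\NS(F))$ is a clean justification that the paper leaves implicit.
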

\begin{proof}
    The argument is the same as in Lemma~\ref{lemma:20discgroupaction}.
\end{proof}

Let $F_1^\vee$ be the flop of $F$ along $\bP(V^\vee)$, and let $\iota^\vee$ be the rational involution on $F$ defined in \cite[Theorem 6.2]{flops} that becomes regular on $F_1^\vee$ (see \cite[Proposition 3.7]{BFMQ24}).

\begin{lemma}\label{lemma:C8C12iotavee}
     The action of $(\iota^\vee)^*$ on $\NS(F)$ is given by $G_1$.
\end{lemma}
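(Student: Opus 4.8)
The plan is to pin down $(\iota^\vee)^*$ by exploiting that it is a monodromy operator fixing a natural class and acting predictably on the Lagrangian planes involved. First I would recall from \cite[Theorem 6.2]{flops} and \cite[Proposition 3.7]{BFMQ24} that $\iota^\vee$ becomes regular on the flop $F_1^\vee$ of $F$ along $\bP(V^\vee)$; hence $(\iota^\vee)^*$ is an isometry of $\NS(F)$ induced by a birational automorphism, so by Lemma~\ref{lemma:C8C12discgroup} it preserves $\Mov(F)$ and acts by $\pm\id$ on $H$. Next I would identify the eigen-data: since $\iota^\vee$ is an involution, $(\iota^\vee)^*$ is an involution, so it is either $\pm\id$ or a reflection-type involution; it is not $\pm\id$ because $\iota^\vee$ is not biregular (its action does not preserve $\Amp(F)$), so $(\iota^\vee)^*$ is a nontrivial involution in $\Orth(\NS(F))$.

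The key geometric input is the description of $\iota^\vee$ in terms of the wall corresponding to $\bP(V^\vee)$: the flop along a Lagrangian plane is governed by the $(-10)$-divisibility-$2$ class $\rho$ with $\rho^\perp$ the relevant wall of $\Mov(F)$, and the induced involution acts as $-1$ on the rank-one span of the "extremal" class dual to that wall while fixing an appropriate complementary sublattice. Concretely, I would compute the class $\rho = \rho^\vee \in \calW_{\flop}$ cutting out the wall of $\Nef(F)$ associated to $\bP(V^\vee)$ — this is a short search over $ag+bp+c\lambda$ with $(ag+bp+c\lambda)^2 = -10$ and divisibility $2$, using the given Gram matrix — and then verify that $G_1$ is precisely the isometry that negates the reflection vector attached to this wall (equivalently, $G_1$ is the reflection in $(\rho^\vee)^{\perp}$ composed with the appropriate sign, or more directly: $G_1$ is the unique nontrivial involution in $\Orth(\NS(F))$ preserving $\Mov(F)$, acting by $\pm\id$ on $H$, and sending $P^*$-related classes correctly). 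Then I would check the remaining constraint pinning down $G_1$ among the generators $G_1,\dots,G_4$ and their products: namely that $(\iota^\vee)^*$ fixes the class $g$ up to the flop — more precisely, since $\iota^\vee$ is defined on $F$ and the flop changes only the chamber across the $\bP(V^\vee)$-wall, $(\iota^\vee)^*$ should fix the Plücker class $g$ if and only if $g$ lies on that wall, which it does not; so instead I track where $g$ goes and match against $G_1(g) = 5g+6\lambda$ (reading off the first column), confirming consistency with the geometry of lines meeting the relevant scrolls.

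The cleanest route, which I would actually carry out, is: (i) show $(\iota^\vee)^*$ acts as $+\id$ on the sublattice $\langle g, p\rangle^{\perp\perp}$-type piece corresponding to classes unaffected by the flop — geometrically, classes of lines disjoint from $\bP(V^\vee)$ — and as a prescribed involution on the rank-one complement spanned by the class of the exceptional locus; (ii) observe that $G_1$ has exactly this eigenspace structure: $G_1$ fixes a rank-two sublattice and acts by $-1$ on its orthogonal complement (check $G_1^2 = \id$ and compute $\mathrm{tr}(G_1) = -1$ from the matrix, giving eigenvalues $\{1,1,-1\}$, i.e. $G_1$ is a reflection); (iii) among the four generators only $G_1$ is simultaneously a reflection, preserves $\Mov(F)$, and acts by $\pm\id$ on $H$ with the sign matching that of $\iota^\vee$ on the transcendental lattice; (iv) conclude $(\iota^\vee)^* = G_1$. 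The main obstacle is step (i)/(iii): rigorously matching the \emph{sign} of the action on the discriminant piece $H$ and confirming that no product of generators other than $G_1$ satisfies all constraints — this requires either a short finite check (the relevant subgroup of $\Orth(\NS(F))$ preserving $\Mov(F)$ is manageable, by the Pell-type argument used in Lemmas~\ref{lemma:disc20chambers}--\ref{lemma:disc20polarizations}) or an explicit geometric identification of one more class, e.g. tracking the image under $\iota^\vee$ of the line class of a generic line meeting $P$ once, and checking it agrees with $G_1$ applied to $p$.
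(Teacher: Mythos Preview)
Your approach has a concrete arithmetic error and, more importantly, misses the single geometric input that makes the argument short. You claim $\mathrm{tr}(G_1)=-1$ forces eigenvalues $\{1,1,-1\}$, but for an involution the trace is (number of $+1$'s) minus (number of $-1$'s), so trace $-1$ on a rank-$3$ lattice gives eigenvalues $\{1,-1,-1\}$. Thus $G_1$ has a \emph{one}-dimensional fixed subspace, not a two-dimensional one, and your step (ii) collapses. In fact the fixed line of $G_1$ is spanned by $g+\lambda$, which is exactly what the paper uses.

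The paper's proof is a two-line affair: \cite[Proposition~6.5]{flops} gives $(\iota^\vee)^*(g+\lambda)=g+\lambda$ directly from the geometry of the scroll involution, and then one checks by hand that $G_1$ is the unique isometry of $\NS(F)$ fixing $g+\lambda$ and acting by $\pm\id$ on $H$. You never invoke this fixed class, and without it the constraints you list (involution, monodromy, preserves $\Mov(F)$) do not single out $G_1$: for instance $G_3$ is also an involution acting by $\pm\id$ on $H$ and preserving the positive cone. Your attempt to recover the missing constraint by identifying $(\iota^\vee)^*$ with the reflection in the $(-10)$-wall for $\bP(V^\vee)$ is conceptually off as well: $\iota^\vee$ is the Hassett--Tschinkel involution coming from the net of scrolls, not the wall-crossing reflection, and its $(-1)$-eigenspace is two-dimensional rather than being the span of the flop class.
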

\begin{proof}
    By \cite[Proposition~6.5]{flops}, $(\iota^\vee)^*(g+\lambda)=g+\lambda$. Direct calculation shows that the only isometry of $\NS(F)$ fixing $g+\lambda$ and acting by $\pm\id$ on the subgroup $H$ of $D(\NS(F))$ from Lemma~\ref{lemma:C8C12discgroup} is $G_1$.
\end{proof}

We now work toward understanding the pullback of the Pl\"ucker class on $F'$ via the rational map $\pi\colon F\dashrightarrow F'$.

\begin{lemma}\label{lemma:C8C12sq6}
    The only class of degree $6$ in $\Nef(F)$ is $g$.
\end{lemma}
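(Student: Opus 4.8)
The plan is to bypass a full description of $\Nef(F)$ and instead exhibit a handful of explicit wall divisors whose half-spaces already force a square-$6$ nef class to equal $g$. I would take the four classes $p,\ g-2p,\ g+2\lambda,\ g-2\lambda\in\NS(F)$. One checks directly that $p^{2}=-2$, that $(g-2p)^{2}=(g\pm2\lambda)^{2}=-10$, and that $g-2p$ and $g\pm2\lambda$ each have divisibility $2$ in $H^{2}(F,\bZ)$; hence all four lie in $\calW_{\pex}\cup\calW_{\flop}$. (Geometrically these are the classes of, respectively, the divisorial contraction of $F$ and the Mukai flops of the Lagrangian planes $P^{*}$, $\bP(V)$, $\bP(V^{\vee})$ of Corollary~\ref{corollary:C8C12disjointplanes} and Lemma~\ref{lemma:C8C12directrix}, but the argument only uses that they are wall divisors.)

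By Theorem~\ref{thm: Mov and Amp}(2), $\Nef(F)$ is the closure of the connected component of $\overline{\mathrm{Pos}(F)}\setminus\bigcup_{\rho\in\calW_{\pex}\cup\calW_{\flop}}\rho^{\perp}$ containing the ample class $g$, so it is contained in the $g$-side half-space of each of the four hyperplanes above; since $g\cdot p=g\cdot(g-2p)=2$ and $g\cdot(g\pm2\lambda)=6$ are all positive, every $h\in\Nef(F)$ satisfies $h\cdot p\ge0$, $h\cdot(g-2p)\ge0$ and $h\cdot(g\pm2\lambda)\ge0$, together with $h\cdot g>0$ (as $h\in\overline{\mathrm{Pos}(F)}$ and $h\ne0$). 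Writing $h=ag+bp+c\lambda$, these inequalities read $a\ge b$, $a+3b\ge0$, $3a+b\ge4|c|$ and $3a+b>0$, while $h^{2}=6$ is equivalent to $(3a-b)(a+b)=3+2c^{2}$. Feeding the bound $2c^{2}\le(3a+b)^{2}/8$ (from $3a+b\ge4|c|$) into this identity and clearing denominators yields $15a^{2}+10ab-9b^{2}\le24$. On the real interval $-a/3\le b\le a$ the left side is a concave function of $b$, hence bounded below by the smaller of its two endpoint values, namely $\tfrac{32}{3}a^{2}$ (at $b=-a/3$); therefore $\tfrac{32}{3}a^{2}\le24$, and since $a\ge1$ (from $3a+b>0$ and $b\le a$) this forces $a=1$. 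Then $b\in\{0,1\}$: the case $b=1$ gives $2c^{2}=1$, which is impossible, and $b=0$ gives $c=0$, i.e.\ $h=g$.

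The only delicate point is the choice of wall classes. With just $p$, $g+2\lambda$, $g-2\lambda$ the region $\{a\ge b,\ 3a+b>0\}$ remains unbounded and $15a^{2}+10ab-9b^{2}$ is unbounded below on it (for instance $5g+6\lambda$ is a square-$6$ class surviving all of these constraints), so the square-$6$ equation alone does not cut it down to finitely many solutions. It is the flop class $g-2p$ — equivalently the inequality $a+3b\ge0$, which is exactly what the Lagrangian plane $P^{*}$ of Corollary~\ref{corollary:C8C12disjointplanes} supplies — that confines $b$ to a bounded interval and closes the argument. Consequently the one input demanding care is the verification that $g-2p$ (and $g\pm2\lambda$) genuinely lie in $\calW_{\flop}$, i.e.\ that their divisibility in $H^{2}(F,\bZ)$ really is $2$; this is a routine check from the discriminant form $D(\NS(F))$ recorded above.
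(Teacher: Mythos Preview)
Your argument is correct and follows essentially the same route as the paper: you use the same four wall classes $p$, $g-2p$, $g\pm2\lambda$ to trap $\Nef(F)$ and then solve for square-$6$ classes, the only difference being that you justify these walls lattice-theoretically (via $\rho^2$ and $\mathrm{div}(\rho)$) rather than geometrically, and you spell out the ``direct computation'' in full.

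One small correction to your closing discussion: the example $5g+6\lambda$ does \emph{not} survive the three remaining constraints, since $(5g+6\lambda)\cdot(g+2\lambda)=30-48=-18<0$. Your broader point is nonetheless right; a genuine example is $9g-8p+4\lambda$, which has square $6$, pairs nonnegatively with $p$ and $g\pm2\lambda$, but has $(9g-8p+4\lambda)\cdot(g-2p)=-30<0$. Also, the divisibility check is perhaps better phrased as following from $\mathrm{div}(g)=2$ together with $\rho-g\in 2H^2(F,\bZ)$ for each of the three $(-10)$-classes, rather than from $D(\NS(F))$ alone.
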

\begin{proof}
    As in \cite[Proposition 7.2]{flops}, the planes $\bP(V)$ and $\bP(V^\vee)$ in $F$ yield walls $(g\pm2\lambda)^\perp$ in $\Mov(F)$ on the boundary of $\Nef(F)$. The plane $P^*\subset F$ dual to $P$ yields a wall $(g-2p)^\perp$ also on the boundary of $\Nef(F)$. Further, the variety of lines incident to $P$ composes a conic bundle over a K3 surface, and the wall $p^\perp$ lies on the boundary of both $\Nef(F)$ and $\Mov(F)$. Hence,\[
    \Nef(F)\subset\Cone((g+2\lambda)^\perp,(g-2\lambda)^\perp,(g-2p)^\perp,p^\perp).
    \]
    Direct computation shows that $g$ is the unique class of degree $6$ in this cone pairing positively with all of $g\pm2\lambda$, $g-2p$, and $p$.
\end{proof}

\begin{lemma}\label{lemma:C8C12plucker}
    Let $\pi\colon F\dashrightarrow F'$ be the birational equivalence described in the proof of Proposition~\ref{proposition:TP1 relating cubics}, and let $g'$ be the Pl\"ucker class on $F'$. Then $\pi^*(g')=3g-2p+2\lambda$. 
\end{lemma}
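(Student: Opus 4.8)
The plan is to identify $\pi^*(g')$ by tracking the images of the Lagrangian planes $P^*$, $\bP(V)$, and $\bP(V^\vee)$ under $\pi$, exactly as in the analogous argument for $\calC_{20}$ in Section~\ref{sec:20}. By Proposition~\ref{proposition:TP1 relating cubics}, $\pi$ is the Mukai flop of $F$ along the disjoint pair $P^*$ and $\bP(V^\vee)$. A Mukai flop along a Lagrangian plane $P$ sends the class of that plane (i.e. the wall divisor $\rho$ with $\rho^\perp$ the relevant wall) to its negative, while fixing the orthogonal complement; more precisely, $\pi^*$ acts on $\NS$ by the reflection in the $(-10)$-class associated to that plane. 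Since $P^*$ and $\bP(V^\vee)$ are disjoint, the two reflections commute, so $\pi^*$ is the product of the reflection $r_{P^*}$ in the class $g-2p$ (the wall from $P^*$, cf.\ Lemma~\ref{lemma:C8C12sq6}) and the reflection $r_{\bP(V^\vee)}$ in the class $g-2\lambda$ (the wall from $\bP(V^\vee)$).

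First I would pin down these two reflections concretely. The class $g-2p$ has square $6 - 8 - 8 = -10$ wait—recompute: $q(g-2p) = q(g) - 4 q(g,p) + 4 q(p) = 6 - 8 - 8 = -10$, with divisibility $2$, so it lies in $\calW_{\flop}$; similarly $q(g-2\lambda) = 6 - 16 = -10$ wait $q(\lambda)=-4$ so $q(g-2\lambda) = 6 + 4\cdot(-4)\cdot\tfrac{1}{?}$—carefully: $q(g-2\lambda) = q(g) - 4q(g,\lambda) + 4q(\lambda) = 6 - 0 - 16 = -10$, divisibility $2$, also in $\calW_{\flop}$. The reflection in a $(-10)$-class $\rho$ of divisibility $2$ is $x \mapsto x + (x\cdot\rho)\,\rho/5$, which is integral precisely because of the divisibility condition. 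I would compute $r_{g-2p}(g)$ and then apply $r_{g-2\lambda}$ to the result (the order is irrelevant since the reflections commute). This gives a candidate for $\pi^*(g')$ purely lattice-theoretically. To confirm it really is $\pi^*(g')$ rather than, say, $-\pi^*(g')$ or a $\Bir$-twist of it, I would invoke that $\pi^*(g')$ is a nef (indeed ample, after pushing to $F'$) class of square $6$ and divisibility $2$ on the flopped model, lying in the chamber of $\Mov(F)$ adjacent to $\Nef(F)$ across the wall $(g-2\lambda)^\perp$; combined with Lemma~\ref{lemma:C8C12sq6} (which says $g$ is the only square-$6$ class in $\Nef(F)$ itself) this determines it uniquely.

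Alternatively, and more robustly, I would use the factorization $\pi = \pi_2 \circ \tau \circ \pi_1$ through $\widetilde{W}_{T_L}^\perp$ recorded in the proof of Proposition~\ref{proposition:TP1 relating cubics}, together with the identification of $(\iota^\vee)^*$ as $G_1$ from Lemma~\ref{lemma:C8C12iotavee}: the composite $F \dashrightarrow \widetilde{W}_{T_L}^\perp \dashrightarrow F$ without the $\tau$ is the identity (both maps are $f$ and $f^{-1}$), so inserting $\tau$ shows $\pi^*(g')$ is the image of $g'$ under the monodromy operator corresponding to $\iota^\vee$, up to the discrepancy between the Plücker polarizations on the two copies of $F'$; chasing this through $G_1$ and using $G_1(g+\lambda) = g+\lambda$ should again yield $3g - 2p + 2\lambda$. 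As a final sanity check I would verify that $q(3g-2p+2\lambda) = 9\cdot 6 + 4\cdot(-2) + 4\cdot(-4) + 2\cdot 3\cdot(-2)\cdot 2 + \cdots$—i.e. that the proposed class has square $6$ and divisibility $2$, and that it pairs positively against $g\pm 2\lambda$, $g-2p$, and $p$, so that it lies in the cone from Lemma~\ref{lemma:C8C12sq6} on the correct side of the flopping wall.

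The main obstacle I anticipate is not the arithmetic but justifying precisely which class on the \emph{source} $F$ the Plücker class $g'$ on $F'$ pulls back to, as opposed to an $\Orth(\NS(F))$-equivalent one: the Mukai-flop description tells us $\pi^*$ up to composition with a birational automorphism of $F$, and one must use the geometry (that $\pi$ restricts to an isomorphism away from the flopped planes, and tracks conics residual to lines as in the $\calC_{20}$ case) to rule out the ambiguity. Concretely, the cleanest fix is to check, as in Section~\ref{sec:20}, that $\pi$ carries $\bP(V)$ to $(P^*)'$ and $P^*$ to $\bP(V^\vee)'$ — forcing $\pi^*$ of the corresponding $(-10)$-wall classes on $F'$ to be the named $(-10)$-wall classes on $F$ — and then solve the resulting linear system for $\pi^*(g')$.
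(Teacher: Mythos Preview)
Your primary approach rests on a false premise: a Mukai flop $\pi\colon F\dashrightarrow F'$ along a Lagrangian plane is an isomorphism in codimension one, so the induced map $\pi^*\colon H^2(F',\bZ)\to H^2(F,\bZ)$ is the \emph{identity} under the natural identification, not a reflection. What changes is which chamber of $\Mov(F)$ is the nef cone, not the lattice itself. Concretely, your reflection formula $x\mapsto x+\tfrac{x\cdot\rho}{5}\rho$ for a $(-10)$-class of divisibility $2$ is not even an integral isometry: already $r_{g+2\lambda}(g)=g+\tfrac{6}{5}(g+2\lambda)$ fails to lie in $\NS(F)$. (You also have the sign wrong: in the paper's conventions $\bP(V^\vee)$ corresponds to the wall $(g+2\lambda)^\perp$, not $(g-2\lambda)^\perp$.) So the product-of-reflections computation cannot produce the answer, and the conceptual picture behind it is incorrect.

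Your two fallback approaches do not close the gap either. The factorization through $\widetilde{W}_{T_L}^\perp$ and $(\iota^\vee)^*=G_1$ does not directly compute $\pi^*(g')$, because $g'$ is an \emph{intrinsic} polarization on $F'$ with no a~priori relation to $g$; knowing that flops act trivially on $H^2$ and that $\tau^*$ agrees with $G_1$ tells you how $\pi^*$ acts as a map, but not where the Pl\"ucker class of $X'$ sits inside $\NS(F)$. The plane-tracking idea from \S\ref{sec:20} also breaks down here: $P^*$ and $\bP(V^\vee)$ lie in the indeterminacy locus of $\pi$, so they are not carried to planes in $F'$. The paper's argument instead identifies the chamber $\pi^*\Nef(F')$ directly: it is the chamber across both walls $(g-2p)^\perp$ and $(g+2\lambda)^\perp$ from $\Nef(F)$, and the paper finds an explicit isometry $G_2\in\Orth(\NS(F))$ fixing their intersection $\nu=3g-p+2\lambda$ and fixing the intermediate chamber $\Nef(F_1^\vee)$ (via $G_2(g+\lambda)=g+\lambda$), which forces $G_2\cdot\Nef(F)=\pi^*\Nef(F')$. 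Then $G_2(g)=3g-2p+2\lambda$ is the unique square-$6$ class in that chamber by Lemma~\ref{lemma:C8C12sq6}, hence equals $\pi^*(g')$.
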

\begin{proof}
    Note that $(g-2p)^\perp\cap(g+2\lambda)^\perp$ is spanned by $\nu=3g-p+2\lambda$. These two walls correspond to Lagrangian planes $P,\bP(V^\vee)\subset F$ that are disjoint, by Lemma~\ref{corollary:C8C12disjointplanes}. In particular, the two planes can be flopped simultaneously on $F$, so $\nu$ lies on the boundary of $\Nef(F)$. Also, $\nu$ lies on the boundary of the chambers corresponding to the flop of $F$ along $\bP(V^\vee)$ (i.e. $\Nef(F_1^\vee$) and the flop of $F$ along $\bP(V^\vee)\cup P^*$ (i.e. $\pi^*\Nef(F')$, by Proposition~\ref{proposition:TP1 relating cubics}).
    
    It is straightforward to calculate that $g+2\lambda$ and $g-2p$ are the only wall divisors orthogonal to $\nu$. Hence there are four chambers of $\Mov(F)$ on whose boundary $\nu$ lies. Since $G_2\cdot\nu=\nu$, $G_2$ permutes these four chambers.

    We claim that $G_2\cdot\Nef(F_1^\vee)=\Nef(F_1^\vee)$. Since $\iota^\vee$ is regular on $F_1^\vee$, and $(\iota^\vee)^*(g+\lambda)=g+\lambda$ (verified using Lemma~\ref{lemma:C8C12iotavee}), we know $g+\lambda\in\Nef(F_1^\vee)$. By calculating $G_2(g+\lambda)=g+\lambda$, we verify that $G_2$ sends $\Nef(F_1^\vee)$ to itself.

    This forces $G_2\cdot\Nef(F)=\Nef(F')$. In particular, $G_2\cdot g=3g-2p+2\lambda$ is the unique class of degree $6$ in $\Nef(F')$ by Lemma~\ref{lemma:C8C12sq6}. This must then be the pullback of the Pl\"ucker class on $F'$.
\end{proof}

We can now apply \Cref{prop: existence of cubic}:

\begin{proposition}\label{proposition:C8C12nonisomorphic}
    The Fanos $F$ and $F'$ are not isomorphic, so in particular $X\not\cong X'$.
\end{proposition}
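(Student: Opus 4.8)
The plan is to apply Proposition~\ref{prop: existence of cubic} with $h = \pi^*(g') = 3g - 2p + 2\lambda$, the class identified in Lemma~\ref{lemma:C8C12plucker}, and to show that the last clause of that proposition fails: namely, that there is no $\phi \in \Bir(F)$ with $\phi^*(h) = g$. Granting this, Proposition~\ref{prop: existence of cubic} gives $F \not\cong F'$ directly, and then $X \not\cong X'$ follows from the Global Torelli theorem for cubic fourfolds \cite{voisintorelli}, since an isomorphism $X \cong X'$ would induce an isomorphism of polarized Fano varieties $(F,g) \cong (F',g')$.

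First I would record that the isometry $G_2 \in \Orth(\NS(F))$ satisfies $G_2 \cdot g = 3g - 2p + 2\lambda = h$, so the pair $(g,h)$ is in the same $\Orth(\NS(F))$-orbit; this verifies hypothesis (2) of Proposition~\ref{prop: existence of cubic}, and hypothesis (1) holds because $h \in \Mov(F)$ has $h^2 = 6$ and $\mathrm{div}(h) = 2$ (a direct computation in the lattice with Gram matrix as displayed). Then the content reduces to the lattice-theoretic claim: \emph{no birational automorphism of $F$ carries $h$ to $g$}. By Lemma~\ref{lemma:C8C12discgroup}, the image of $\Bir(F)$ in $\Orth(\NS(F))$ consists exactly of the isometries that preserve $\Mov(F)$ and act by $\pm\id$ on the subgroup $H = \langle \tfrac{3g-p}{8}\rangle \oplus \langle \tfrac{\lambda}{4}\rangle$ of the discriminant group. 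So I must show: any $\phi \in \Orth(\NS(F))$ with $\phi(h) = g$ either fails to preserve $\Mov(F)$ or does not act by $\pm\id$ on $H$.

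The key computation is the following. The set $\{\psi \in \Orth(\NS(F)) : \psi(h) = g\}$ is a coset of the stabilizer $\Stab(g) \le \Orth(\NS(F))$; since $G_2$ is one such element, every such $\psi$ equals $\sigma \circ G_2$ for some $\sigma \in \Stab(g)$. I would compute $\Stab(g)$ explicitly from the generators $\pm1, G_1, G_2, G_3, G_4$ of $\Orth(\NS(F))$ (it is a finite group, generated by $-1$ together with the reflections fixing $g$, such as $G_3$ and possibly products), and then check the induced action of each element $\sigma G_2$ on the discriminant group $D(\NS(F))$. The point is that $g$ itself is \emph{not} in the $\Bir(F)$-orbit of $h$: equivalently, the unique class of degree $6$ in $\Nef(F)$ is $g$ (Lemma~\ref{lemma:C8C12sq6}), so if some birational automorphism sent $h = \pi^*(g')$ to $g$, then $h$ and $g$ would both be nef on birational models in the same $W_{Bir}$-orbit, forcing $\Nef(F')$ and $\Nef(F)$ to coincide up to $\Bir(F)$ — but the chamber decomposition of $\Mov(F)$ analyzed in the proof of Lemma~\ref{lemma:C8C12plucker} shows $\pi^*\Nef(F')$ is a genuinely different chamber, separated from $\Nef(F)$ by the walls $(g+2\lambda)^\perp$ and $(g-2p)^\perp$. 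This geometric description is the cleanest route: it shows $h$ and $g$ lie in distinct $W_{Bir}$-orbits of chambers, so no element of $\Bir(F)$ relates them, which is exactly what is needed.

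The main obstacle is bookkeeping the action on the discriminant group correctly: I need to be sure that the chambers $\Nef(F)$, $\Nef(F_1^\vee)$, and $\pi^*\Nef(F')$ of $\Mov(F)$ are pairwise inequivalent under $\Bir(F)$, not merely under the full $\Orth(\NS(F))$. This is where Lemma~\ref{lemma:C8C12discgroup} is essential — one must rule out that some \emph{non-monodromy} isometry has been conflated with a birational automorphism. Concretely, in the proof of Lemma~\ref{lemma:C8C12plucker} we already showed $G_2$ sends $\Nef(F)$ to $\Nef(F')$ and fixes $\Nef(F_1^\vee)$; the remaining task is to confirm that $G_2$ does \emph{not} act by $\pm\id$ on $H$ (so $G_2 \notin \im(\Bir(F))$), which is a single short matrix computation modulo $8$ and modulo $4$. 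Once that is in hand, $F \not\cong F'$, hence $X \not\cong X'$, completing the proof.
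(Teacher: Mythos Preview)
Your proposal is correct and follows essentially the same route as the paper: reduce via Proposition~\ref{prop: existence of cubic} and Lemma~\ref{lemma:C8C12discgroup} to checking that no isometry sending $g$ to $h=3g-2p+2\lambda$ acts by $\pm\id$ on $H$. The paper is simply more direct---rather than describing the coset $\Stab(g)\cdot G_2$ abstractly, it observes that the only isometries with $\phi(g)=h$ are $G_2$ and $G_2G_3$ and verifies that neither acts by $\pm\id$ on $H$; your final paragraph should make clear that both elements (not just $G_2$) must be checked, which your earlier stabilizer discussion already anticipates.
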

\begin{proof}
    The fact that the second claim follows from the first is obvious. By Proposition~\ref{prop: existence of cubic}, Lemma~\ref{lemma:C8C12discgroup}, and Lemma~\ref{lemma:C8C12plucker}, it suffices to check that there is no isometry $\phi\in\Orth(\NS(F))$ such that $\phi(g)=3g-2p+2\lambda$ and acting by $\pm\id$ on 
    \[
    H=\left\langle\frac{3g-p}{8}\right\rangle\oplus\left\langle\frac{\lambda}{4}\right\rangle\subset D(\NS(F)).
    \]
    Indeed, the only isometries satisfying $\phi(g)=3g-2p+2\lambda$ are $G_2$ and $G_2G_3$, and neither acts on $H$ by $\pm\id$.
\end{proof}

We now prove our main theorem:

\begin{proof}[Proof of \Cref{theorem:C8C12}]
 For ({\bf{BE}}), note that $X$ and $X'$ are both birational to the GM fourfold $Z_{T_L}$. Moreover, since $Z_{T_L}$ is the GM fourfold associated to the pairs $(X,T_L)$ and $(X',T')$, \cite[Theorem 5.8]{KuzPerry} yields
    \[
    \calA_X\simeq\calA_{Z_{T_L}}\simeq\calA_{X'},
    \]
proving ({\bf{FM}}). In \Cref{proposition:TP1 relating cubics}, we proved ({\bf{BF}}), but in \Cref{proposition:C8C12nonisomorphic}, we prove $F\not\cong F'$, which allows us to conclude that $X\not\cong X'$.
\end{proof}

\section{Cubics of discriminant $546$}\label{sec:546}

To see that neither Fourier-Mukai partnership nor a derived equivalence between the Fano varieties implies that two cubic fourfolds have birationally equivalent Fano varieties of lines, turn to discriminant $546$. The example is inspired by \cite{Meachanetal}.

\begin{theorem}\label{theorem:546}
    Let \(X\in \calC_{546}\) be a very general cubic fourfold and $F$ its Fano variety of lines. Then there exists a cubic fourfold \(X'\in \calC_{546}\) with Fano variety of lines $F'$ such that: 
    \begin{enumerate}
        \item[({\bf{BE}})] $X$ and $X'$ are conjecturally birational,
        \item[({\bf{FM}})] \(\calA_X\simeq \calA_{X'}\),
        \item[$\neg$({\bf{BF}})] \(F\) and \(F'\) are not birational, and
        \item[({\bf{DF}})] \(D^b(F)\simeq D^b(F')\).
    \end{enumerate}
\end{theorem}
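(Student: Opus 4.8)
\medskip

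The plan is to construct $X'$ not by explicit geometry but through the period theory of cubic fourfolds and their Fano varieties: exhibit a Fourier--Mukai partner $F'$ of $F=F(X)$ that fails to be birational to $F$, and then realise $F'$ as the Fano variety of a cubic fourfold in $\calC_{546}$; the arithmetic of $546$ is what forces this. First comes the bookkeeping: $546 = 2\cdot 3\cdot 7\cdot 13$ satisfies $(**)$, and it also satisfies $(***)$, since $546 = \tfrac{2n^2+2n+2}{a^2}$ for $a=1$, $n=16$. Hence a very general $X\in\calC_{546}$ has an associated K3 surface $S$, with $\Pic(S)$ of rank one, such that $\widetilde{H}(\calA_X,\bZ)\cong\widetilde{H}(S,\bZ)$ and $F\bir\Hilb^2(S)$; in particular $X$ is conjecturally rational. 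The numerics of $546$ (the content of \Cref{remark:546numerics}) ensure that $S$ admits a Fourier--Mukai partner K3 surface $S'$ with $S'\not\simeq S$ but $D^b(S)\simeq D^b(S')$; since Fourier--Mukai partner K3 surfaces of Picard rank one have the same degree, $\NS(\Hilb^2 S')\cong\NS(\Hilb^2 S)\cong\NS(F)$, while $T(S')\cong T(S)$ as Hodge structures.

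Next I would realise $\Hilb^2(S')$ as the Fano variety of a cubic. Fixing an isometry $\psi\colon\NS(\Hilb^2 S')\xrightarrow{\sim}\NS(F)$ and letting $g$ be the Pl\"ucker class of $F$, the class $h:=\psi^{-1}(g)$ satisfies $h^2=6$, $\mathrm{div}(h)=2$; after a monodromy translate we may assume $h$ is ample on some birational model $F'_0$ of $\Hilb^2(S')$. Arguing exactly as in the proof of \Cref{prop: existence of cubic} — the period point of $(F'_0,h)$ avoids the Heegner divisors $D_{6,2}^{(2)}\cup D_{6,6}^{(2)}$ because $(\NS(F'_0),h)\cong(\NS(F),g)$ as polarised lattices and this avoidance holds for $(F,g)=(F(X),g)$ — the surjectivity of the period map for cubic fourfolds (\cite{laz09}, \cite{Loo}) produces a smooth cubic fourfold $X'$ whose Fano variety $F':=F(X')$ is birational to $\Hilb^2(S')$ with Pl\"ucker class $h$. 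Since $\psi$ matches the polarised lattices, $X'\in\calC_{546}$.

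Then I would check the four assertions. For ({\bf{DF}}): a derived equivalence $D^b(S)\simeq D^b(S')$ induces one $D^b(\Hilb^2 S)\simeq D^b(\Hilb^2 S')$ (by the derived McKay correspondence), and since birational hyperk\"ahler fourfolds of K3$^{[2]}$-type are derived equivalent \cite{kawamata}\cite{namikawa}, this gives $D^b(F)\simeq D^b(F')$. For ({\bf{FM}}): since $F'\bir\Hilb^2(S')$, the extended Mukai Hodge structure of $F'$ is that of $S'$, so $\widetilde{H}(\calA_{X'},\bZ)\cong\widetilde{H}(S',\bZ)$; chaining $\widetilde{H}(\calA_X,\bZ)\cong\widetilde{H}(S,\bZ)\cong\widetilde{H}(S',\bZ)\cong\widetilde{H}(\calA_{X'},\bZ)$ (the middle isometry by Orlov's theorem, as $S,S'$ are Fourier--Mukai partners) and invoking the derived Torelli theorem for cubic fourfolds gives $\calA_X\simeq\calA_{X'}$. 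For ({\bf{BE}}): both $X,X'\in\calC_{546}$ and $546$ satisfies $(***)$, so the Galkin--Shinder conjecture \cite{galkinshinder} predicts both are rational, hence conjecturally birational.

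Finally, $\neg$({\bf{BF}}) is the crux, and I expect it to be the main obstacle. By the Torelli theorem \Cref{thm:torelli}, $F$ and $F'$ are birational if and only if there is a Hodge isometry $H^2(F,\bZ)\to H^2(F',\bZ)$ induced by a birational map (equivalently, carrying $\Mov(F)$ onto $\Mov(F')$); as $F\bir\Hilb^2(S)$ and $F'\bir\Hilb^2(S')$, this amounts to asking that $\Hilb^2(S)$ and $\Hilb^2(S')$ be birational. The whole point of $d=546$ is that, although $S$ and $S'$ are derived equivalent, their Hilbert squares are \emph{not} — this is the phenomenon exploited in \cite{Meachanetal}. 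Establishing it requires showing that no isometry $\NS(\Hilb^2 S)\to\NS(\Hilb^2 S')$ both respects the wall structure (the $(-2)$- and $(-10)$-wall divisors that cut out $\Nef$ inside $\Mov$) and glues, up to sign, to a Hodge isometry of $T(S)\cong T(S')$; equivalently, that the relevant polarised lattices lie in more than one orbit of the monodromy group. Granting this, the theorem follows, the remaining ingredients being a routine assembly of \Cref{prop: existence of cubic}, the surjectivity of the period maps for cubic fourfolds and for polarised hyperk\"ahler fourfolds, Orlov's theorem, and the Galkin--Shinder conjecture.
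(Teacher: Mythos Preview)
Your overall strategy is sound and runs parallel to the paper's, but you build $X'$ from the K3 side whereas the paper builds it from the cubic side. The paper simply invokes \cite[Proposition~2.6]{FL24}, which counts Fourier--Mukai partners of a very general cubic in $\calC_d$: for $d=546$ there are exactly two, so an $X'$ with $\calA_X\simeq\calA_{X'}$ and $X'\not\cong X$ is handed to you for free, and ({\bf{FM}}) is immediate by construction. Then \cite{Brakkee} identifies $F\cong\Hilb^2(S)$ and $F'\cong\Hilb^2(S')$ (not merely birationally---since $3\mid 546$, each Hilbert square has a unique birational model), and ({\bf{DF}}) follows from $D^b(S)\simeq\calA_X\simeq\calA_{X'}\simeq D^b(S')$ via \cite{Ploog}. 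Your route---take an FM-partner K3 $S'$ first, form $\Hilb^2(S')$, then realise it as a Fano variety via the period map---reaches the same destination but obliges you to reprove ({\bf{FM}}) through Mukai lattices and to deduce $X\not\cong X'$ only after $\neg$({\bf{BF}}) is established.

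The genuine gap is the one you flag yourself: $\neg$({\bf{BF}}). You defer to the phenomenon of \cite{Meachanetal} without verifying it for $d=546$, and your proposed verification (comparing wall structures and monodromy orbits) is left undone. The paper closes this cleanly with \Cref{lemma:546square6classes}: the $(-2)$-classes $11g\pm 2\lambda$ bound $\Mov(F)$, and the system $6a^2-182b^2=6$ with $66a\pm 364b>0$ has only the solution $(a,b)=(1,0)$, so $g$ is the \emph{unique} square-$6$ class in $\Mov(F)$. By \Cref{prop: existence of cubic}, any cubic whose Fano variety is birational to $F$ must then be isomorphic to $X$; since $X'\not\cong X$, $F'$ is not birational to $F$. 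This elementary Pell-type bound is the missing key lemma in your argument, and once you have it the wall-divisor and monodromy-orbit analysis you outline becomes unnecessary.
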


Following \cite{Brakkee}, the key ingredient of this result is the fact that the Fano variety $F$ of lines on a cubic fourfold $X$ of discriminant $d$ satisfying condition condition \((***)\) is birational to the Hilbert square of points on a K3 surface, as mentioned in \Cref{subsec:special}. This also implies that $X$ is conjecturally rational.

We start with lattice-theoretic observations about $\NS(F)$, whose BBF form is given below.
\begin{center}
\begin{tabular}{r|rr}
& $g$  & $\lambda$  \\ \hline
$g$ & $6$   & $0$   \\
$\lambda$ & $0$ & $-182$ 
\end{tabular}
\end{center}

\begin{lemma}\label{lemma:546square6classes}
    The only square $6$ class in $\Mov(F)$ is $g$.
\end{lemma}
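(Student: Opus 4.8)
The plan is to first characterize $\Mov(F)$ using Theorem~\ref{thm: Mov and Amp}, then enumerate square $6$ classes lattice-theoretically and rule out all but $g$ by showing they lie outside $\Mov(F)$. Since $\NS(F)$ is the rank $2$ lattice with Gram matrix $\mathrm{diag}(6,-182)$, I would write a general class as $\rho = ag + b\lambda$, so that $\rho^2 = 6a^2 - 182b^2$. The square $6$ condition becomes $6a^2 - 182b^2 = 6$, i.e. $a^2 - 1 = \tfrac{182}{6}b^2$; since $6 \nmid 182$, we need $3 \mid b$, and writing $b = 3c$ this reduces to a Pell-type equation $a^2 - 273 c^2 = 1$. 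The solutions $(a,c)$ form an infinite cyclic family generated by the fundamental solution, together with sign changes; I would identify the fundamental solution (the smallest $a>1$ with $a^2-273c^2=1$) and note that all square $6$ classes are $\pm(ag+b\lambda)$ for these $(a,b=3c)$, together with $\pm g$ itself (the case $b=0$).

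Next I would pin down $\Mov(F)$. First check whether $q_F$ represents $-2$: the equation $6a^2 - 182b^2 = -2$, i.e. $3a^2 - 91b^2 = -1$, has no solution mod $3$ (it forces $91 b^2 \equiv 1$, i.e. $b^2 \equiv 1 \bmod 3$, so $b \not\equiv 0$, and then $3a^2 = 91b^2 - 1 \equiv 91 - 1 = 90 \equiv 0 \bmod 3$, which is consistent — so I need to check more carefully mod $9$ or directly). If $\calW_{\pex} = \emptyset$, then $\Mov(F) = \overline{\mathrm{Pos}(F)}$ and the only constraint is positivity of intersection with a fixed ample class, equivalently lying in the same component of $\{x^2 \geq 0\}$ as $g$. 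If instead there is a $(-2)$-class, I would locate the corresponding wall $\rho^\perp$ and observe it cuts off a sub-cone; in either case the walls of $\Mov(F)$ are determined by the (finitely many, up to the relevant group action) wall divisors in $\calW_{\pex}$, and $g$ lies strictly inside $\Mov(F)$ since $g$ is the Plücker class and hence ample on $F$ itself.

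Then I would finish by checking that each square $6$ class $\rho = \pm(ag + 3c\lambda)$ with the fundamental solution or larger fails to lie in $\Mov(F)$: either because it lies in the wrong component of the positive cone, or because it is separated from $g$ by one of the wall hyperplanes $\rho'^\perp$, $\rho' \in \calW_{\pex}$. The cleanest route, mirroring the propagation-of-Pell-solutions arguments in Lemmas~\ref{lemma:disc20chambers}, \ref{lemma:disc20polarizations} and \ref{lemma:C8C12sq6}, is to show that a square $6$ class with $|a| > 1$ has negative intersection with $g$ after suitable adjustment, or more directly that $g \cdot \rho = 6a$ while $\rho^2 = g^2 = 6$ forces, via the Cauchy–Schwarz-type inequality on the hyperbolic lattice $\NS(F)$ together with $\mathrm{div}$ constraints, that $\rho$ and $g$ cannot both be in $\Mov(F)$ unless $\rho = \pm g$; combined with the fact that $-g \notin \overline{\mathrm{Pos}(F)}$, only $g$ survives. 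The main obstacle I anticipate is the bookkeeping in verifying $\calW_{\pex} = \emptyset$ (or correctly describing it) and then confirming that every nontrivial Pell solution produces a class outside $\Mov(F)$ rather than merely outside $\Nef(F)$ — i.e. making sure the argument controls the full movable cone, not just the ample cone, which is exactly why the non-representation of $-2$ (if it holds) is the crucial simplifying input.
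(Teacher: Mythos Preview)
Your plan has a genuine gap, and it stems from an inverted expectation about $\calW_{\pex}$. You write that ``the non-representation of $-2$ (if it holds) is the crucial simplifying input,'' but in fact the opposite is true: if $\calW_{\pex}=\emptyset$ then $\Mov(F)=\overline{\mathrm{Pos}(F)}$, and \emph{every} square $6$ class in the positive cone---of which there are infinitely many, coming from your Pell family---would lie in $\Mov(F)$, so the lemma would be false. Your proposed Cauchy--Schwarz alternative cannot rescue this: in a hyperbolic rank $2$ lattice there is no obstruction to two distinct square $6$ classes both lying in the positive cone (indeed $g\cdot\rho=6a>0$ for all your Pell solutions with $a>0$).

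What actually happens is that $\calW_{\pex}\neq\emptyset$: the classes $\rho_1=11g-2\lambda$ and $\rho_2=11g+2\lambda$ satisfy $\rho_i^2=121\cdot 6-4\cdot 182=-2$. (Your mod $3$ check was inconclusive for exactly this reason; the solution to $3a^2-91b^2=-1$ is $(a,b)=(11,2)$.) The paper's proof simply observes that any class in $\Mov(F)$ pairs nonnegatively with both $\rho_1$ and $\rho_2$, giving the constraints $66a\pm 364b\geq 0$, i.e.\ $66a\geq 364|b|$. Combined with $6a^2-182b^2=6$, one checks directly that the only integer solution is $(a,b)=(1,0)$: the bound forces $b^2<1089/91<12$, and for $|b|\in\{1,2,3\}$ the resulting $a^2$ is not an integer or not a perfect square. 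So the entire argument is: locate the $(-2)$-walls, write down the two linear inequalities they impose, and observe that the hyperbola $6a^2-182b^2=6$ meets the resulting cone only at $(1,0)$. Your outline would reach the same endpoint once you correct the expectation about $\calW_{\pex}$, but the Pell machinery and the Cauchy--Schwarz detour are unnecessary.
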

\begin{proof}
    The classes $\rho_1=11g-2\lambda$ and $\rho_2=11g+2\lambda$ both square to $-2$ and pair positively with $g$, so any class in $\Mov(F)$ pairs nonnegatively with $\rho_1$ and $\rho_2$. Hence square $6$ classes $ag+b\lambda$ in $\Mov(F)$ correspond to integer solutions to the system of equations
    \[
    6a^2-182b^2=6,\; 66a-364b>0,\; 66a+364b>0.
    \]
    The only solution is $a=1$, $b=0$.
\end{proof}

\begin{proof}[Proof of Theorem~\ref{theorem:546}]
By \cite[Proposition 2.6]{FL24}, \(X\) has two Fourier-Mukai partners; let \(X'\) be the nontrivial one, so that \(X\not\cong X'\). By \cite[Proposition~3.4]{Huy17}, the equivalence \(\calA_X\simeq \calA_{X'}\) gives a Hodge isometry \(\widetilde{H}(X,\bZ)\cong \widetilde{H}(X',\bZ)\), restricting to a Hodge isometry \(T(X)\cong T(X')\), thus \(X'\in \calC_{546}\). Since \(d=546\) satisfies condition $(***)$ (namely, with \(a=1, n=16\)), it follows by \cite[Theorem~2]{AddingtonTwoRatConjs} that \(F\) is birational to \(\Hilb^2(S)\) for a K3 surface \(S\) satisfying \(D^b(S)\simeq \calA_X\). In fact, more is true: by \cite[Prop.~4.5, Remark~4.6]{Brakkee}, since \(3 \mid d\), we have \(F\cong \Hilb^2(S)\) since \(\Hilb^2(S)\) has only one birational model. Similarly we have \(F'\cong \Hilb^2(S')\) for a K3 surface \(S'\) satisfying \(D^b(S')\simeq \calA_{X'}\). 

We now argue that $F$ and $F'$ are not birational. Since $X\not\cong X'$, a birational equivalence $F\bir F'$ would imply the existence of (at least) two orbits of square $6$ divisibility $2$ classes in $\Mov(F)$ under the action of $\Bir(F)$, by Proposition~\ref{prop: existence of cubic}. This is not the case, according to Lemma~\ref{lemma:546square6classes}.

For the final claim, we note that 
\[D^b(S)\simeq \mathcal{A}_X\simeq \mathcal{A}_{X'}\simeq D^b(S'),\] 
and thus 
\[D^b(F)\simeq D^b(\mathrm{Hilb}^2(S))\simeq D^b(\mathrm{Hilb}^2(S'))\simeq D^b(F'),\] 
by \cite[Proposition~8]{Ploog}.

\end{proof}

\begin{remark}\label{remark:546numerics}
It is likely that other examples of cubic fourfolds of discriminant \(d\) can be constructed that satisfy Theorem~\ref{theorem:546}. When \(d\) satisfies \((***)\) with \(d =6pq\) where \(p, q >3\) are distinct primes, such a cubic fourfold has a non-trivial Fourier-Mukai partner, and the results of \cite{Brakkee} guarantee that the Fano variety of lines has a unique birational model. An understanding of how the birational automorphism group acts on line bundles of degree $6$ and divisibility $2$ (i.e. a statement analogous to Lemma~\ref{lemma:546square6classes}) would be necessary for any fixed \(d\) to complete the argument. Such cubic fourfolds are conjecturally rational.
\end{remark}

\bibliographystyle{alpha}
\bibliography{bibliography}
\end{document}